\newtheorem{theorem}[subsubsection]{Theorem}
\newtheorem{lemma}[theorem]{Lemma}
\newtheorem{prop}[theorem]{Proposition}
\newtheorem{corollary}[subsubsection]{Corollary}
\newtheorem{conjecture}[theorem]{Conjecture}
\theoremstyle{definition}
\newtheorem{definition}[subsubsection]{Definition}
\newtheorem{remark}[theorem]{Remark}
\newtheorem{example}[subsubsection]{Example}
\newtheorem{question}[theorem]{Question}
\numberwithin{equation}{section}
\newcommand{\mC}{\mathbb{C}}
\newcommand{\mN}{\mathbb{N}}
\newcommand{\mZ}{\mathbb{Z}}
\newcommand{\mR}{\mathbb{R}}
\newcommand{\mNinf}{\mathbb{N}_{\infty}}
\newcommand{\bk}{\Bbbk}
\newcommand{\Spec}{\operatorname{Spec}}
\newcommand{\TSpec}{\operatorname{TSpec}}
\newcommand{\TIndec}{\operatorname{TIndec}}
\newcommand{\Irr}{\operatorname{Irr}}
\newcommand{\Ann}{\operatorname{Ann}}
\newcommand{\TAnn}{\operatorname{TAnn}}
\newcommand{\TSys}{\operatorname{TSys}}
\newcommand{\LAnn}{\operatorname{lAnn}}
\newcommand{\RAnn}{\operatorname{rAnn}}
\newcommand{\Res}{\operatorname{Res}}
\newcommand{\Ind}{\operatorname{Ind}}
\newcommand{\End}{\operatorname{End}}
\newcommand{\supp}{\operatorname{supp}}
\newcommand{\Sym}{\operatorname{Sym}}
\newcommand{\gr}{\operatorname{gr}}
\newcommand{\Rep}{\operatorname{Rep}}
\newcommand{\p}{\mathtt{p}}
\newcommand{\q}{\mathtt{q}}
\newcommand{\aaa}{\mathtt{a}}
\newcommand{\sss}{\mathtt{s}}
\newcommand{\ddd}{\mathtt{d}}
\newcommand{\gggr}{\mathtt{g}}
\newcommand{\cA}{\mathcal{A}}
\newcommand{\cC}{\mathcal{C}}
\newcommand{\cD}{\mathcal{D}}
\newcommand{\cT}{\mathcal{T}}
\newcommand{\bP}{\mathbf{P}}
\newcommand{\bB}{\mathbf{B}}
\newcommand{\bY}{\mathbf{Y}}
\newcommand{\mP}{\mathfrak{P}}
\newcommand{\Vecc}{\mathtt{Vec}}
\newcommand{\sVec}{\mathtt{sVec}}
\newcommand{\Ver}{\mathtt{Ver}}
\newcommand{\tto}{\twoheadrightarrow}
\newcommand{\unit}{\mathbf{1}}
\newcommand{\OB}{\mathcal{O}\mathcal{B}}
\newcommand{\cI}{\mathcal{I}}
\newcommand{\mF}{\mathbb{F}}
\begin{document}

\title[T-prime ideals]{A special class of prime ideals for infinite symmetric group algebras}

\author{Kevin Coulembier}
\address{School of Mathematics and Statistics, University of Sydney, NSW 2006, Australia}
\email{kevin.coulembier@sydney.edu.au}

\maketitle

\begin{abstract}
We identify an interesting special class of prime ideals in the finitary infinite symmetric group algebra. We show that the set of such ideals carries a semiring structure. Over the complex numbers, we establish a connection with spherical representations of (the Gelfand pair corresponding to) the infinite symmetric group. In positive characteristic, we investigate a close connection with the structure theory of symmetric tensor categories.
\end{abstract}


\section*{Introduction}

We consider the symmetric group $S_\infty$ of finitary permutations of a countable set, and study ideals in the goup algebra $\bk S_\infty$, for a field $\bk$. If $\bk$ has characteristic zero, the poset of (prime) ideals in $\bk S_\infty$ has a classical combinatorial description, see \cite{Br, FL, VK}. On the other hand, if $\bk$ has characteristic $p>0$, our undestanding of (prime) ideals in $\bk S_\infty$ is rather limited, see \cite{Za2} for some overview and motivation, despite an elegant representation-theoretic reformulation of part of the problem in \cite{Za}. A notable exception is the classification of maximal ideals, which has been completed in \cite{BK} for $p>2$. There are $p-1$ maximal ideals, which include the kernels of the trivial and sign representations, the two maximal ideals that exist in characteristic~0.

 In \cite{PolyFun}, a connection between the above results and (symmetric) tensor categories \cite{Del02, EGNO} was observed. To any object $X$ in a tensor category~$\cC$, one can associate an ideal $\TAnn_{\cC}(X)$ in $\bk S_\infty$ using the symmetric braiding. In characteristic 0, by \cite{Del02}, any tensor category of moderate growth (a condition which in the present context simply states that $\TAnn_{\cC}(X)$ is never zero) has a unique tensor functor to the category $\sVec$ of super vector spaces. Furthermore, the two annihilator ideals of the two simple objects in $\sVec$ are precisely the two maximal ideals in $\bk S_\infty$. By more recent results in \cite{CEO, EO, Os}, in characteristic $p>0$, any {\em Frobenius exact} tensor category of moderate growth has a unique tensor functor to the Verlinde category $\Ver_p$. Again, the annihilator ideals of the $p-1$ simple objects in $\Ver_p$ (for $p>2$ now) correspond precisely to the maximal ideals in $\bk S_\infty$. The categories $\Ver_p$ are part of a chain of tensor categories 
 $$\Ver_p\;\subset\; \Ver_{p^2}\;\subset\; \Ver_{p^3}\;\subset\;\cdots,$$
introduced in \cite{BEO, AbEnv}. It was conjectured in \cite[Conjecture~1.4]{BEO} that any tensor category of moderate growth has a unique tensor functor to $\Ver_{p^\infty}=\cup_n\Ver_{p^n}$. If $p=2$, it is known that there are at least 2 maximal ideals in $\bk S_\infty$, deviating from the pattern in \cite{BK}, and correspondingly, it was demonstrated in \cite{PolyFun} that these are the annihilator ideals of the two simple objects in $\Ver_4$. This deviates again from the pattern for $p>2$, but reinforces the impression that the connection with tensor categories is to be taken seriously.

The starting point of the current paper is therefore the question of to which extent the study of ideals in $\bk S_\infty$ and the structure theory of tensor categories in positive characteristic can benefit from one another. We can observe that not any ideal in $\bk S_\infty$ can be of the form $\TAnn_{\cC}(X)$. Indeed, the latter ideals satisfy the `primeness condition' that when $f\bullet g$ is in the ideal then one of the factors must be in the ideal, where $\bullet$ represents the `tensor' product $\bk S_m\times \bk S_n\to \bk S_{m+n}$. We call ideals that satisfy a linear version of this primeness condition {\bf T-prime}, and observe that these are in particular prime. We introduce two operations $\dagger$ and $\Join$ on ideals in $\bk S_\infty$ that make the set $\TSpec \bk S_\infty$ of T-prime ideals into a commutative semiring. It then follows that, for a tensor category $\cC$,
$$\TAnn_{\cC}: M^{\oplus}(\cC)\to \TSpec\bk S_\infty$$
is a semiring homomorphism from the split Grothendieck ring $M^{\oplus}(\cC)$. The above discussion in characteristic zero can now be summarised and enhanced to the statement that $\TAnn_{\cC}$ is an isomorphism (of semirings) if and only if $\cC=\sVec$. Moreover, in characteristic zero the T-prime ideals can be characterised in many equivalent ways, see Theorem~\ref{ThmClassT}, demonstrating that they form a canonical special class of prime ideals, worthy of further study. If $\bk=\mC$, we can additionally characterise them as the (left or right) annihilator ideals of the spherical $(S_\infty\times S_\infty, S_\infty)$-representations. For completeness, we determine the left annihilator ideals of the broader class of Olshanski-admissible representations of $S_\infty\times S_\infty$, as classified in \cite{Ok}, using more recent insight announced in \cite{VN}. Now the annihilator ideals can be arbitrary prime ideals, not necessarily T-prime ideals.

Of course, our main interest lies in positive characteristic $p>0$. We strengthen \cite[Conjecture~1.4]{BEO} into a conjecture that mainly states that
$$\TAnn_{\cC}: M^{\oplus}(\cC)\to \TSpec\bk S_\infty$$
is an isomorphism if and only if $\cC=\Ver_{p^\infty}$, and derive some evidence. We show for example that $\TAnn_{\cC}$ is injective for $\cC=\Ver_p$ and $\cC=\Ver_4$.

The paper is organised as follows. In Section~\ref{Prel}, we recall some preliminary results. In Section~\ref{SecT} we define T-prime ideals, the operations $\dagger$ and $\Join$, and some measures for the `size' of T-prime ideals. In Section~\ref{Sec0} we focus on characteristic zero and in Section~\ref{SecComp} we zoom in further on $\bk=\mC$. Finally, in Section~\ref{SecTC}, we initiate the deeper study of the connection between T-prime ideals and tensor categories in positive characteristic. In Section~\ref{SecDim} we explore annihilator ideals of the form $\TAnn$ obtained from more general symmetric monoidal categories, leading to the notion of the dimension of an ideal in $\bk S_\infty$, which is the image of an integer in $\bk$.


\section{Preliminaries}\label{Prel}

Let $\bk$ be an arbitrary field, unless further specified, and let $\mN$ denote the set of non-negative integers.

\subsection{The infinite symmetric group}

\subsubsection{} Denote by $S_n$, with $n\in\mZ_{>0}$, the permutation group of the set $\{1,2, \ldots,n\}$. This gives a canonical inclusion $S_n\subset S_{n+1}$.
We consider the infinite symmetric group and its group algebra
$$S_\infty:=\bigcup_n S_n\;=\;\varinjlim_n S_n\quad\mbox{and}\quad\bk S_\infty\;\simeq\;\varinjlim_n \bk S_n.$$

For the Young subgroup $S_m\times S_n$ of $S_{m+n}$ we use the corresponding algebra inclusion introduce the notation $-\bullet-$:
$$\bk S_m\otimes \bk S_n\;\to\; \bk S_{m+n},\quad f\otimes g\mapsto f\bullet g.$$
We will use the same notation $f\bullet g\in \bk S_\infty$ for $f\in \bk S_m\subset \bk S_\infty$ and $g\in \bk S_\infty$. If it is unclear in which $\bk S_m\subset \bk S_\infty$ we consider $f$, we will write more specifically $f\stackrel{m}{\bullet}g$.

\subsubsection{}
All ideals we consider are two-sided unless mentioned otherwise. An ideal $I<\bk S_\infty$ is determined by the ideals $I_n:=I\cap \bk S_n$ for all $n\in\mZ_{>0}$. Any system of ideals $I_n<\bk S_n$ with $I_{n}=I_{n+1}\cap \bk S_n$ determines uniquely a corresponding ideal $I=\cup_nI_n$. We let $\Spec \bk S_\infty$ be the topological space of prime ideals in $\bk S_\infty$, for the Jacobson topology.

For a ring $R$ and an $R$-module $M$, we denote by $\Ann_RM$, or simply $\Ann M$, its annihilator ideal in $R$.
Recall that for a subring $R\subset S$, with $S$ free as a right $R$-module, and a left $R$-module~$N$, we have
\begin{equation}\label{EqInd}
\Ann_S(S\otimes_RN)\;=\;\Ann_S(S/S\Ann_R(N)),
\end{equation}
so that the left-hand side only depends on $\Ann_R(N)$.

For two $\bk$-algebras $A,B$ with modules $M,N$, we have
\begin{equation}\label{AnnTensor}
\Ann_{A\otimes B}(M\boxtimes N)\;=\; \Ann_A(M)\otimes B+ A\otimes \Ann_B(N).
\end{equation}

\subsubsection{} The simple $\bk S_n$-modules are denoted by $D^\lambda$. Here $\lambda$ runs over all partitions of $n$ if $\mathrm{char}(\bk)=0$ (and $D^\lambda$ is the Specht module corresponding to $\lambda$) and over all $p$-regular partitions of $n$ if $\mathrm{char}(\bk)=p>0$.

For an appropriate partition $\lambda\vdash n$, we will use the symbol $e_\lambda$ to denote an arbitrary primitive idempotent in $\bk S_n$ corresponding to $D^\lambda$.

For a module $M$ over a finite dimensional associative $\bk$-algebra, we write $\Irr(M)$ for the set of simple modules that appear as simple constituents.

\subsubsection{}There are two notions of inductive systems relating to $ S_\infty$. 

By an {\bf inductive system}, we refer to an inductive system of~$S_\infty$ as in \cite[Definition~1.1]{Za}. This is a collection $\Phi^n$ of (isomorphism classes of) simple $\bk S_n$-modules for each $n\in\mZ_{>0}$, such that $V\in \Phi^n$ if and only if there is some $U\in \Phi^{n+1}$ so that $V$ is a composition factor of $\Res^{S_{n+1}}_{S_n}U$. Contrary to \cite{Za}, we allow the empty inductive system.

We use the term {\bf lifted inductive system} for an inductive system as in \cite[Definition~2.1.1]{PolyFun}. This is a collection $\bB^n$ of indecomposable $\bk S_n$-modules for each $n\in\mZ_{>0}$, such that $V\in \bB^n$ if and only if there is some $U\in \bB^{n+1}$ so that $V$ is a direct summand of $\Res^{S_{n+1}}_{S_n}U$.

If $\mathrm{char}(\bk)=0$, the two notions clearly coincide and we simply say `inductive system' for both. In general, for a lifted inductive system $\bB$ we can define the inductive system $K_0(\bB)$ such that $K_0(\bB)^n$ contains all simple modules that are constituents of the modules in $\bB^n$, so that we may view $\bB$ as a `lift' of $K_0(\bB)$.

An inductive system $\Phi$ is {\bf indecomposable} if we cannot write it as $\Phi_1\cup\Phi_2$ for proper subsystems $\Phi_i\subset\Phi$.

\subsubsection{}\label{SecPhiI} Consider an ideal $I<\bk S_\infty$. Then the following conditions are equivalent on a simple $\bk S_n$-module $D^\lambda$:
\begin{enumerate}
\item[(i)] $D^\lambda\in \Irr(\bk S_n/I_n)$;
\item[(ii)] $D^\lambda\in \Irr(\bk S_\infty/ I)$;
\item[(iii)] $e_\lambda\not\in I$.
\end{enumerate}
We can thus define an inductive system $\Phi(I)$, where
$\Phi(I)^n$ comprises those $D^\lambda$ that satisfy the conditions (i)-(iii). 
The assignment $I\mapsto \Phi(I)$ is, in a way that will be made more concrete in \cite{CEOq}, a very close analogue of the operation that take a tensor ideal in a Krull-Schmidt monoidal category and considers its underlying thick tensor ideal, see \cite{Selecta}. It is also an analogue of the connection between ideals in polynomial rings and algebraic sets, although as we will see the behaviour of prime ideals deviates.
\color{black}

Conversely, for an inductive system $\Phi$, we define  ideals
$$\hat{I}(\Phi)_n:=\bigcap_{V\in \Phi^n}\Ann_{\bk S_n}V\;<\; \bk S_n.$$
If $\mathrm{char}(\bk)=0$, or more generally if $\Phi$ is semisimple (meaning every $V\in \Phi^n$ restricts to a semisimple $S_m$-representation for any $m<n$), it follows that the collection $\{\hat{I}(\Phi)_n\mid n\}$ defines an ideal in $\bk S_\infty$, which we denote by $I(\Phi)$, so $I(\Phi)_n=\hat{I}(\Phi)_n$. In general, we define $I(\Phi)$ to be the maximal ideal $J$ in $\bk S_\infty$ that satisfies the following equivalent conditions:
\begin{enumerate}
\item[(i)] $J_n\subset\hat{I}(\Phi)_n$ for all $n$;
\item[(ii)] $e_\lambda\not\in J$ whenever $D^\lambda\in \Phi^n$;
\item[(iii)]  $\Phi\subset\Phi(J)$; 
\item[(iv)] $\Phi(J)=\Phi$.
\end{enumerate}
 The equivalence of the first three conditions is immediate. To obtain the equivalence between (iii) and (iv), it suffices to observe that there exists an ideal $K$ with $\Phi(K)=\Phi$. Indeed, existence of the minimal such ideal follows from the Jordan-H\"older theorem, as explained in the proof of \cite[Theorem~1.25]{Za}. \color{black}
It is also easy to give a direct description of $I(\Phi)$, namely
\begin{equation}\label{IPhi}I(\Phi)_n:=\bigcap_{m\ge n,V\in \Phi^m}\Ann_{\bk S_n}\Res^{S_m}_{S_n}V\;<\; \bk S_n.
\end{equation}

By the definition via (iv), we find, for any inductive system $\Phi$,
\begin{equation}
\label{eqPIP}
\Phi(I(\Phi))\;=\;\Phi.
\end{equation}
\color{black}

The following theorem is due to Zalesskii \cite{Za}, see also \cite{BK, Br, FL}.
Recall that an ideal is {\bf semiprimitive} if it is an intersection of primitive ideals. If the characteristic of $\bk$ is zero, then every ideal in $\bk S_\infty$ is semiprimitive, see for instance \cite[Lemma~1]{For}. 




\begin{theorem}[Zalesskii]\label{ThmZal}
The assignments
$I\mapsto \Phi(I)$ and $\Phi\mapsto I(\Phi)$ are (mutually inverse) inclusion reversing bijections between the set of semiprimitive ideals in $\bk S_\infty$ and the set of inductive systems.
 The bijection restricts to a bijection between the set of non-empty indecomposable inductive systems and semiprimitive prime ideals.
\end{theorem}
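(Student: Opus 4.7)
The proof breaks into three blocks: mutual inversion at the level of inductive systems, mutual inversion at the level of semiprimitive ideals, and the refinement to primes and indecomposable systems. Inclusion-reversal is immediate from \eqref{IPhi} and the characterisation in \ref{SecPhiI}: enlarging $\Phi$ intersects more annihilators so $I(\Phi)$ shrinks, while shrinking $I$ leaves more $e_\lambda$ outside $I_n$ so $\Phi(I)$ grows.

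For $\Phi(I(\Phi))=\Phi$, the inclusion $\Phi\subset\Phi(I(\Phi))$ is built into the maximality clause defining $I(\Phi)$. Conversely, if $e_\lambda\notin I(\Phi)$, then \eqref{IPhi} furnishes some $m\ge n$ and $V\in\Phi^m$ with $D^\lambda$ a composition factor of $\Res^{S_m}_{S_n}V$. Iterating the downward-closure encoded in the defining ``iff'' of an inductive system (each composition factor of $\Res^{S_{k+1}}_{S_k}U$ with $U\in\Phi^{k+1}$ lies in $\Phi^k$) from level $m$ down to level $n$ places $D^\lambda$ in $\Phi^n$.

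For $I(\Phi(I))=I$ when $I$ is semiprimitive, the inclusion $I\subset I(\Phi(I))$ is again maximality. For the reverse, I would write $I=\bigcap_\alpha \Ann_{\bk S_\infty}M_\alpha$ with the $M_\alpha$ simple $\bk S_\infty$-modules to reduce to the primitive case $I=\Ann M$. There one identifies $\Phi(I)^n$ with the simple $\bk S_n$-constituents of $M$, so that $\bigcap_{D^\lambda\in\Phi(I)^n}\Ann_{\bk S_n}D^\lambda$ annihilates $M$ as a $\bk S_n$-module; semiprimitivity of $I$ forces semisimplicity of the finite-dimensional quotient $\bk S_n/I_n$, which collapses this intersection back to $I_n$. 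The terms in \eqref{IPhi} indexed by $m>n$ add only refinements to $\Phi(I)^n$ but no new annihilators. This semisimplicity step (automatic in characteristic zero but requiring the locally nilpotent characterisation of the Jacobson radical cited in the excerpt in positive characteristic) is where the hypothesis is used.

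For the refinement to primes and indecomposable systems, a proper decomposition $\Phi=\Phi_1\cup\Phi_2$ yields $I(\Phi)=I(\Phi_1)\cap I(\Phi_2)$ via \eqref{IPhi}; choosing simples $V_i\in\Phi_i\setminus\Phi_{3-i}$ and invoking the already-established $\Phi\circ I=\mathrm{id}$, the idempotents $e_{V_{3-i}}$ lie in $I(\Phi_i)\setminus I(\Phi)$. For any $f\in I(\Phi_1)\setminus I(\Phi)$ and $g\in I(\Phi_2)\setminus I(\Phi)$, two-sidedness then gives $f\,\bk S_\infty\, g\subset I(\Phi_1)\cap I(\Phi_2)=I(\Phi)$, breaking primality. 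The converse direction—that a semiprimitive non-prime $I$ forces a proper decomposition of $\Phi(I)$—is the main obstacle: one must produce a coherent block-wise splitting of the semisimple quotients $\bk S_n/I_n$ that is compatible across all $n$. The key input is that two-sided ideals in a finite-dimensional semisimple algebra are unions of block components, so the elements witnessing non-primality force each level $\Phi(I)^n$ to split along blocks, and the local finite-dimensionality of $\bk S_\infty$ must be used to assemble these level-wise splittings into a genuine pair of sub-inductive systems.
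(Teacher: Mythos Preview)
Your argument for $I(\Phi(I))\subset I$ rests on the claim that semiprimitivity of $I<\bk S_\infty$ forces each finite-dimensional quotient $\bk S_n/I_n$ to be semisimple. This is false in positive characteristic: take $I=0$, which is semiprimitive (the Jacobson radical of $\bk S_\infty$ vanishes, by the locally-nilpotent characterisation you cite), yet $\bk S_n$ is not semisimple once $n\ge p$. More generally, the vanishing of the Jacobson radical of $\bk S_\infty/I$ says nothing about the Jacobson radical of the finite level $\bk S_n/I_n$; the passage to the colimit is precisely what kills the nilpotents. Consequently your chain ``$\bigcap_{D^\lambda\in\Phi(I)^n}\Ann D^\lambda$ annihilates $M$'' fails: an element killing every composition factor of $M$ over $\bk S_n$ need only act nilpotently on finite-dimensional $\bk S_n$-submodules of $M$, not as zero. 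The genuine content of the theorem in positive characteristic is that the intersection in \eqref{IPhi} over \emph{all} $m\ge n$ nonetheless recovers $I_n$; this requires real work and is what the paper imports from \cite[Theorem~1.25]{Za} together with \cite[Proposition~2.6(i)]{BK}.

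Your sketch of the converse prime/indecomposable direction inherits the same problem, since it again leans on semisimplicity of $\bk S_n/I_n$ to produce a coherent block decomposition. The paper does not attempt a self-contained argument here either; it cites \cite[Proposition~8.5]{Za} for the equivalence and separately observes (via the locally-nilpotent Jacobson radical of $\bk S_\infty/P$) that every prime ideal is semiprimitive, a point your outline omits. Your $\Phi(I(\Phi))=\Phi$ step and the ``decomposable $\Rightarrow$ non-prime'' direction are fine.
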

\begin{proof}
For fields of characteristic zero, this is \cite[Propositions~3.3 and~3.11]{Za}. For positive characteristic, it is \cite[Theorem~1.25]{Za}  and \cite[Proposition~8.5]{Za}.
\end{proof}

\begin{remark}
In characteristic zero one can show that every prime ideal in $\bk S_\infty$ is primitive. However, in positive characteristic, prime ideals (even T-prime ideals) need not be semiprimitive, see Remark~\ref{rem:primenotsp} below.
\end{remark}

\begin{example}
The zero ideal in $\bk S_\infty$ is prime, as follows for instance easily from Proposition~\ref{prop:Tpisp}(1) below. By the main theorem of \cite{For}, the zero ideal is also semiprimitive. Hence $0$ is the unique semiprimitive prime ideal corresponding to the inductive system containing all simple modules.
\end{example}
\color{black}

%
%
%
%

\subsection{Tensor categories}

\subsubsection{}
We follow the conventions from \cite{EGNO} regarding tensor categories, except that we only consider {\em symmetric} tensor categories and functors and hence leave `symmetric' out of the terminology. In particular, an essentially small $\bk$-linear symmetric category
$(\cC,\otimes,\unit)$ is a {\bf tensor category over $\bk$} if~$\cC$ is abelian with objects of finite length, $\bk\to\End_{\cC}(\unit)$ is an isomorphism and the monoidal category $(\cC,\otimes,\unit)$ is rigid.

The smallest tensor category over $\bk$ is the category $\Vecc$ of finite dimensional vector spaces over $\bk$. The tensor category $\sVec$ of
supervector spaces is the category of $\mZ/2$-graded vector spaces with braiding such that the braiding isomorphism on $\bar{\unit}\otimes\bar{\unit}$ for the non-trivial simple object $\bar{\unit}$ is $-1.$

For $X\in\cC$, the maximal quotient of the full tensor power $X^{\otimes n}$ that is invariant under the braiding action of the symmetric group is $\Sym^n X$.

\subsubsection{}Assume that $\bk$ is algebraically closed. A tensor category $\cC$ is {\bf incompressible} if every tensor functor from $\cC$ to a second tensor category is an equivalence onto a tensor subcategory, see~\cite{Incomp}.

A tensor category $\cC$ is of {\bf moderate growth} if for every $X\in\cC$ the (well-defined) limit
$$\mathsf{gd}(X)\;:=\; \lim_{n\to\infty}\sqrt[n]{\ell(X^{\otimes n})}\;\in\;\mR_{\ge 0}\cup\{\infty\}$$
is finite, see for instance~\cite[\S 4]{CEO}. 

By \cite[Theorem~5.2.1]{Incomp}, every tensor category of moderate growth admits a tensor functor to an incompressible tensor category (of moderate growth).
By classical results of Deligne \cite{Del02}, the only incompressible tensor categories in characteristic zero are $\Vecc$ and $\sVec$. If $\mathrm{char}(\bk)=p>0$, the only currently known incompressible tensor categories are the tensor subcategories of $\Ver_{p^\infty}$, see~\cite{BEO, AbEnv}.

{\bf Frobenius exact} tensor categories over fields of characteristic $p>0$ can be characterised in many equivalent ways, see for instance \cite{Tann, CEO, EO}. By the main result of \cite{CEO}, every Frobenius exact tensor category of moderate growth admits a tensor functor to $\Ver_p\subset\Ver_{p^\infty}$.


\section{T-prime ideals}\label{SecT}

In this section, we let $\bk$ be an arbitrary field.

\subsection{Definition}

\subsubsection{}
There are two obvious maximal ideals in $\bk S_\infty$, which coincide if $\mathrm{char}(\bk)=2$. Write $J_+$ and $J_-$ for the annihilator ideals of the trivial and sign module, or equivalently the kernels of the algebra morphisms
$$\bk S_\infty \to\bk,\; S_\infty\ni\sigma\mapsto 1,\qquad\mbox{or}\qquad \bk S_\infty \to\bk,\; S_\infty\ni\sigma\mapsto (-1)^{|\sigma|}.$$

\begin{lemma}\label{lem:totprim}
The only totally prime ideals in $\bk S_\infty$ are $J_+$ and $J_-$.
\end{lemma}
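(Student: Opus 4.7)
The plan is to prove both directions separately. For the easy direction, observe that $J_+$ and $J_-$ are respectively the kernels of the algebra maps $\bk S_\infty \to \bk$ sending $\sigma \mapsto 1$ and $\sigma \mapsto (-1)^{|\sigma|}$. Hence $\bk S_\infty / J_\pm \simeq \bk$, a domain, so both ideals are totally prime.

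For the converse, let $I$ be a totally prime ideal. The central tool is the identity
$$(t-1)(t+1) \;=\; t^2 - 1 \;=\; 0 \qquad \text{for any transposition } t \in S_\infty.$$
Total primeness applied to $t = (1,2)$ forces $(1,2) - 1 \in I$ or $(1,2) + 1 \in I$.

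Suppose first that $(1,2) - 1 \in I$. Since $I$ is two-sided, conjugation yields $g(1,2)g^{-1} - 1 \in I$ for every $g \in S_\infty$. Because every transposition in $S_\infty$ is conjugate to $(1,2)$, we obtain $t - 1 \in I$ for every transposition $t$. Next I would recall that the augmentation ideal $J_+$ is generated as a two-sided ideal by $\{t-1 : t \text{ transposition}\}$: indeed, $S_\infty$ is generated by transpositions, and if $g = t_1 \cdots t_k$ then the telescoping
$$g - 1 \;=\; t_1(t_2 \cdots t_k - 1) + (t_1 - 1)$$
shows by induction on $k$ that $g-1$ lies in the left ideal generated by the $t_i - 1$. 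Hence $J_+ \subseteq I$; since $J_+$ is maximal and $I$ is proper, $I = J_+$. The symmetric argument in the other case gives $I = J_-$.

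I do not expect a real obstacle here, the argument is essentially formal. The only point worth verifying cleanly is the consistency in characteristic $2$: there $(1,2)-1 = (1,2)+1$ and $J_+ = J_-$, so the two cases collapse into one. Note also that we never simultaneously need $(1,2) \pm 1$ to lie in $I$: this would force $2 \in I$, excluded in odd characteristic since $I$ is proper.
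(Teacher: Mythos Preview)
Your proof is correct, and it takes a different route from the paper's. The paper argues structurally: a totally prime ideal in $\bk S_\infty$ restricts to a totally prime ideal in each $\bk S_n$; in a finite-dimensional algebra every prime ideal is primitive with quotient a matrix algebra $\End_\bk(V)$, and such a quotient is a domain only when $\dim V=1$, forcing the restriction to be the kernel of the trivial or sign representation at every level. Your argument is more elementary and entirely self-contained: it exploits the single relation $(t-1)(t+1)=0$ for a transposition, uses conjugacy of transpositions, and then the fact that $J_\pm$ is generated by the elements $t\mp 1$. Your approach avoids invoking the structure theory of prime ideals in finite-dimensional algebras (and the absolute irreducibility of simple $\bk S_n$-modules implicit in the paper's ``$\End_\bk(V)$''), at the cost of being specific to symmetric groups rather than illustrating a general principle about locally finite algebras. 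Both proofs are short; yours is arguably cleaner for this particular statement.
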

\begin{proof}
A totally prime in a ring restricts to a totally prime ideal in any subring. In $\bk S_n$, the only prime ideals are primitive and their quotients are $\End_{\bk}(V)$ for the corresponding simple $V$. Hence, the only totally prime ideals in $\bk S_n$ are the ideals of the one-dimensional representations, the trivial and the sign representation. 
\end{proof}

We introduce the following term for a weaker notion; the `T' stands for `totally', but also for `tensor'. We will see that the following two definitions coincide in characteristic zero. We currently also have no examples to show the two definitions differ in positive characteristic.\color{black}

\begin{definition}\label{DefT}
\begin{enumerate}
\item An ideal $I<\bk S_\infty$ is  {\bf weakly T-prime} \color{black} if for all $m,n\in \mZ_{>0}$, $f\in\bk S_m$ and $g\in \bk S_n$ with
$f\bullet g\in I_{m+n}$,
we have $f\in I$ or $g\in I$.
\item  An ideal $I<\bk S_\infty$ is {\bf T-prime} if for all $m,n\in \mZ_{>0}$, the following equivalent conditions are satisfied:
\begin{enumerate}
\item For each $m,n\in \mZ_{>0}$ we have
$$I_{m+n}\cap(\bk S_m\otimes \bk S_n)\;=\; I_m\otimes \bk S_n+\bk S_m\otimes I_n;$$
\item For each $m,n\in \mZ_{>0}$ we have
$$\Ann_{\bk S_m\otimes \bk S_n}\bk S_{m+n}/I_{m+n}\;=\; \Ann_{\bk S_m\otimes \bk S_n}(\bk S_m/I_m\boxtimes \bk S_n/I_n).$$
\end{enumerate}
\end{enumerate}

\end{definition}

It is obvious (even without Lemma~\ref{lem:totprim}) that a totally prime ideal must be weakly T-prime. On the other hand, we have the following result.

\begin{prop}\label{prop:Tpisp}
\begin{enumerate}
\item A weakly T-prime ideal is either prime or $\bk S_\infty$.
\item Any maximal ideal in  $\bk S_\infty$ is weakly T-prime.
\end{enumerate}

\end{prop}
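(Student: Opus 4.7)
The strategy is to handle~(2) by the contrapositive using a ``shifting'' technique, and then reduce~(1) to the same manipulation. The underlying device is that for any $g \in \bk S_n$ and any $N \geq 0$, the shifted element $\mathrm{sh}_N(g) \in \bk S_{N+n}$ (obtained by letting $g$ act on $\{N+1,\dots,N+n\}$ instead of $\{1,\dots,n\}$) is conjugate to $g$ by a finitary permutation, so it lies in a two-sided ideal $I$ if and only if $g$ does; moreover, for $N$ chosen after a finite collection of elements of $\bk S_\infty$ has been specified, $\mathrm{sh}_N(g)$ has support disjoint from all of them and hence commutes with them.

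For~(2), suppose $I$ is maximal and $f \in \bk S_m$, $g \in \bk S_n$ satisfy $f \bullet g \in I$; I would show that $f \notin I$ forces $g \in I$. Maximality of $I$ gives $1 = c + \sum_i r_i f s_i$ for some $c \in I$ and $r_i, s_i \in \bk S_\infty$. Choose $N \geq m$ so that all $r_i, s_i, c$ lie in $\bk S_N$, and set $\bar g := \mathrm{sh}_N(g)$. Conjugating $f \bullet g = f \cdot \mathrm{sh}_m(g)$ by a finitary permutation that fixes $\{1,\dots,m\}$ pointwise and maps $\{m+1,\dots,m+n\}$ onto $\{N+1,\dots,N+n\}$ shows $f \bar g \in I$. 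Using that $\bar g$ commutes with $f, r_i, s_i, c$,
$$\bar g \;=\; \bar g\cdot 1 \;=\; \bar g c + \sum_i \bar g r_i f s_i \;=\; \bar g c + \sum_i r_i (f\bar g) s_i \;\in\; I,$$
and since $\bar g$ is conjugate to $g$, we conclude $g \in I$.

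For~(1), let $I$ be T-prime with $I \neq \bk S_\infty$, and take $f, g \in \bk S_\infty$ with $f\, \bk S_\infty\, g \subseteq I$. Pick $m$ with $f, g \in \bk S_m$, and let $\tau \in S_{2m}$ be the involution swapping $\{1,\dots,m\}$ with $\{m+1,\dots,2m\}$. Then $\tau g \tau^{-1} = \mathrm{sh}_m(g)$, so $f\tau g = f \cdot \mathrm{sh}_m(g) \cdot \tau = (f \bullet g)\tau \in I$; multiplying by $\tau^{-1}$ on the right gives $f \bullet g \in I$, and T-primeness yields $f \in I$ or $g \in I$. This is precisely the elementwise criterion for $I$ to be prime (and $I \neq \bk S_\infty$ by assumption).

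The main obstacle is the bookkeeping in~(2): one must produce $c, r_i, s_i$ from maximality \emph{first} and only then shift $g$ far enough so that $\bar g$ commutes with all of them, and one has to arrange the conjugation moving $\mathrm{sh}_m(g)$ to $\bar g$ so that it leaves $f$ untouched. Both are straightforward given the freedom of working inside $S_\infty$.
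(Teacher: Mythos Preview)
Your proof is correct and follows essentially the same approach as the paper: both use that shifting $g$ produces a conjugate element (hence in the same two-sided ideal) that commutes with any prescribed finite collection, and for (2) both express $1$ via maximality and then push $g$ past the coefficients. For (1) the paper is marginally more direct, arguing by contrapositive that if $(f)(g)\subset I$ with $f,g\notin I$ then $f\bullet g\in (f)(g)\subset I$; your explicit computation $f\tau g=(f\bullet g)\tau$ is exactly the same observation unwound via the elementwise prime criterion.
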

\begin{proof} First we prove part (1).
If an ideal $I<\bk S_\infty$ is not prime, then there are $f\in \bk S_m$ and $g\in \bk S_n$, both not in $I$, for which $(f)(g)\subset I$. Then
$$f\bullet g\;\in\; (f)(g)\;\subset\; I,$$
showing that $I$ is not T-prime.

For part (2), let $I<\bk S_\infty$ be a maximal ideal and $f\in\bk S_m$ with $f\not\in I$. Hence we can write
$$1=afb+x$$
for $a,b\in \bk S_\infty$ and $x\in I$.
Now consider an arbitrary $g\in \bk S_n$ with $f\bullet g\in I$. Then, for sufficiently large $r$ we can write
$$1\stackrel{r}{\bullet} g\;=\;  (a\stackrel{r}{\bullet}1) (f\stackrel{r}{\bullet} g) (b\stackrel{r}{\bullet}1)+x\stackrel{r}{\bullet} g.$$
Both terms on the right are in $I$, so $g\in I$.
\end{proof}

\subsubsection{Convention} It will be convenient to {\em allow the identity ideal to be T-prime}, as done in Definition~\ref{DefT}. Conversely, and contrary to Definition~\ref{DefT}, from now on we {\em will assume that a T-prime ideal is not zero} unless explicitly stated otherwise.

\subsubsection{} We introduce the class of (indecomposable) inductive systems that link to T-prime ideals under the correspondence in Theorem~\ref{ThmZal}. We call an inductive system $\Psi$ {\bf T-indecomposable} if for every $D^\lambda\in\Phi_m$ and $D^\mu\in\Phi_n$, there exists $D^\kappa\in\Phi_{m+n}$ with
$$\left[\Res^{S_{m+n}}_{S_m\times S_n} D^\kappa \,:\, D^\lambda\boxtimes D^\mu\right]\;\not=0.$$
One directly verifies that T-indecomposable systems are indecomposable.  We denote the set of T-indecomposable systems as $\TIndec\bk S_\infty$ and again, unless explicitly stated otherwise, we do not consider the T-indecomposable inductive system of all simple modules.

Correspondingly, we denote by $\TSpec \bk S_\infty$ the topological space of T-prime ideals in $\bk S_\infty$, including the identity ideal but excluding the zero ideal. Strictly speaking this is thus not a subspace of $\Spec \bk S_\infty$.

\begin{prop}\label{LemDefT}\label{Prop:MIndec}
 
\begin{enumerate}
\item If an ideal $I<\bk S_\infty$ is weakly T-prime, then $\Phi(I)$ is T-indecomposable.
\item If $\Phi$ is T-indecomposable, then $I(\Phi)$ is T-prime.
\item If $I$ is semiprimitive and weakly T-prime, then it is T-prime.
\end{enumerate}
\noindent In particular, the assignment $\Phi$ restricts to a surjective map
\[\TSpec\bk S_\infty\;\stackrel{\Phi}{\tto}\;\TIndec\bk S_\infty,\]
and to a bijection between T-indecomposable inductive systems and semiprimitive T-prime ideals.
\end{prop}
\begin{proof}
First we deal with (1). Suppose that $I$ is a weakly T-prime ideal. If $D^\lambda\in \Phi(I)_m$ and $D^\mu\in\Phi(I)_n$, then $e_\lambda,e_\mu\not\in I$ and hence $e_\lambda\bullet e_\mu\not\in I$. This means that $D^\kappa \in \Phi(I)_{m+n}$ for some $D^\kappa$ appearing in the top of the projective $\bk S_{m+n} (e_\lambda\bullet e_\mu) $. The latter condition on $\kappa$ is equivalent to
$$\left[\Res^{S_{m+n}}_{S_m\times S_n} D^\kappa \,:\, D^\lambda\boxtimes D^\mu\right]\;\not=0.$$
This shows that $\Phi(I)$ is T-indecomposable.

Now we prove (2). For any ideal $I$, the right-hand side of the equation in Definition~\ref{DefT}(2)(a) is included in the left-hand side. For an inductive system $\Phi$, using \eqref{IPhi}, we can write the left-hand side for $I=I(\Phi)$ as
$$\bigcap_{r\ge m+n,W\in \Phi^r}\Ann_{\bk S_m\otimes \bk S_n} \Res^{S_r}_{S_m\times S_n}W.$$
Similarly, using \eqref{AnnTensor}, the right-hand side is
$$\bigcap_{d\ge m,t\ge n, U\in \Phi^d, V\in \Phi^t}\Ann_{\bk S_m\otimes \bk S_n} \Res^{S_d\times S_t}_{S_m\times S_n}U\boxtimes V.$$
Now, if $\Phi$ is T-indecomposable, any $U\boxtimes V$ as above is a constituent of some $W\in\Phi^{d+t}$ restricted to $S_d\times S_t$ and in particular
$$\Ann_{\bk S_m\otimes \bk S_n} \Res^{S_{d+t}}_{S_m\times S_n}W\;\subset\;\Ann_{\bk S_m\otimes \bk S_n} \Res^{S_d\times S_t}_{S_m\times S_n}U\boxtimes V,$$
where we point out that the different (conjugate) ways of interpreting $S_{m}\times S_n< S_{d+t}$ that appear above lead to the same annihilator. This observation thus implies that under our assumptions the left-hand side of \ref{DefT}(2)(a) is contained in the right-hand side and thus equal.

Claim (3) follows from the combination of (1) and (2), since $I=I(\Phi(I))$ for semiprimitive ideals $I$, see Theorem~\ref{ThmZal}.

The final sentences follow from equation~\eqref{eqPIP}.
\end{proof}

%
%

Since in characteristic zero all ideals in $\bk S_\infty$ are semiprimitive, Proposition~\ref{LemDefT} simplifies as follows.
\begin{corollary}
If $\mathrm{char}(\bk)=0$, then an ideal in $\bk S_\infty$ is weakly T-prime if and only if it is T-prime, and T-prime ideals are in bijection with T-indecomposable inductive systems.
\end{corollary}

The following consequence is new only for $p=2$, due to the classification for $p>2$ in \cite{BK}.

\begin{corollary}
\begin{enumerate}
\item Every maximal ideal in $\bk S_\infty$ is T-prime.
\item Every minimal (non-empty) inductive system is T-indecomposable.
\end{enumerate}

\end{corollary}
\begin{proof}
Part (1) follows from the combination of Proposition~\ref{prop:Tpisp}(2) and Proposition~\ref{LemDefT}(3).

Part (2) follows from the combination of Proposition~\ref{prop:Tpisp}(2) and Proposition~\ref{LemDefT}(1). We can also argue directly as follows.

Let $\Phi$ be a minimal inductive system. For a fixed $D^\lambda\in\Phi^m$ in the system consider the family of $D^\mu\in\Phi$ for which $D^\lambda\boxtimes D^\mu$ is a constituent of the restriction of some $D^\kappa\in\Phi$. By construction, this family is closed under taking the simple constituents of restrictions. By considering the commutative square of restriction functors
\[\xymatrix{
\Rep S_{m+n+1}\ar[rr]\ar[d]&& \Rep S_{m}\times S_{n+1}\ar[d]\\
\Rep S_{m+n}\ar[rr]&& \Rep S_{m}\times S_n
}\]
it follows that this family actually forms an inductive system. It is non-empty since the trivial representation of $S_1$ belongs to it. Hence the family is the entire system, by minimality.
\end{proof}

\color{black}

\subsection{Operations}
We introduce two binary operations on ideals in $\bk S_\infty$.

\begin{definition}
For ideals $I,J$ in $\bk S_\infty$, the ideal $I\Join J$ is defined as the preimage of $$I\otimes \bk S_\infty \;+\; \bk S_\infty\otimes J$$
under the comultiplication algebra morphism
$\Delta:\bk S_\infty\to\bk S_\infty \otimes \bk S_\infty$.
\end{definition}
By cocommutativity, we have $I\Join J=J\Join I$, and similarly coassociativity implies associativity of $\Join$. We also have
$$(I\Join J)_n\,= \,\Ann_{\bk S_n}\left(\bk S_n/I_n\otimes \bk S_n/J_n\right)\,=\, \Ann_{\bk S_n}\left(V\otimes W\right),$$
for any $\bk S_n$-modules $V,W$ with annihilator ideals $I_n,J_n$.

\begin{example}\label{ExPlus}
\begin{enumerate}
\item The augmentation ideal $J_+$ is the identity element for the operation $\Join$, meaning that for every ideal $I$ in $\bk S_\infty$,
$$I\Join J_+\;=\; I\;=\; J_+\Join I.$$
\item The identity ideal is the zero element for the operation $\Join$, meaning 
$$I\Join \bk S_\infty\;=\; \bk S_\infty\;=\; \bk S_\infty\Join I.$$
\item For two $\bk S_\infty$-modules, we have
$$\Ann (M\otimes N)\;=\; \Ann(M)\Join \Ann(N),$$
for example $J_-\Join J_-=J_+$.
\end{enumerate}
\end{example}


\begin{definition}
For ideals $I,J$ in $\bk S_\infty$, the ideal $I\dagger J$ is given by
\begin{eqnarray*}
(I\dagger J)_n&=& \bigcap_{i=0}^n \Ann_{\bk S_n} (\bk S_{n}/\bk S_n(I_i\otimes \bk S_{n-i}\,+\, \bk S_i\otimes J_{n-i}))\\
&=& \bigcap_{i=0}^n \Ann_{\bk S_n} \Ind^{S_n}_{S_i\times S_{n-i}}(\bk S_i/I_i\boxtimes \bk S_{n-i}/J_{n-i})\\
&=& \bigcap_{i=0}^n \Ann_{\bk S_n} \Ind^{S_n}_{S_i\times S_{n-i}}(V_i\boxtimes W_{n-i}),
\end{eqnarray*}
for any $\bk S_i$-module $V_i$ with annihilator $I_i$ and any $\bk S_{n-i}$-module $W_{n-i}$ with annihilator $J_{n-i}$.
\end{definition}
That this chain of ideals actually defines an ideal in $\bk S_\infty$ follows from Mackey's formula. By definition $I\dagger J=J\dagger I$, and also $\dagger$ is associative.
\begin{example}\label{ExDagger}
\begin{enumerate}
\item The identity ideal $\bk S_\infty$ is the identity element for the operation  $\dagger$, meaning
$$I\dagger \bk S_\infty\;=\; I\;=\; \bk S_\infty \dagger I.$$
\item For $m,n\in\mN$, we define the ideal 
$$\mP_{m,n}\,:= \,J_+^{\dagger m}\dagger J_-^{\dagger n},$$
so $\mP_{0,0}=\bk S_\infty$ by convention. Then $(\mP_{m,n})_i$ is the kernel of the permutation action
$$\bk S_i\;\to\; \End(V^{\otimes i}),$$
for $V$ a supervector space of dimension $m|n$. This follows directly, or from Example~\ref{ExsVec} below. We take the convention that for $\mathrm{char}(\bk)=2$, we have $\mP_{m,n}=J^{\dagger (m+n)}_+$.
\end{enumerate}

\end{example}

\begin{lemma}
For all ideals $I,J<\bk S_\infty$ and $P\in \TSpec \bk S_\infty$, we have
$$(I\dagger J)\Join P\;=\; (I\Join P)\dagger (J\Join P).$$
\end{lemma}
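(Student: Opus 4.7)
The plan is to verify equality of the two ideals at every level $n$ by choosing convenient module representatives. Fix $\bk S_m$-modules $V_m, W_m, U_m$ with annihilators $I_m, J_m, P_m$ respectively; for concreteness one may take $U_m = \bk S_m/P_m$, which is the key choice so that condition (5) of Proposition~\ref{Prop:MIndec} applies directly. Using these, the definition of $\dagger$ produces a canonical module whose annihilator is $(I\dagger J)_n$, namely the direct sum
\[
X_n\;:=\;\bigoplus_{i=0}^n \Ind^{S_n}_{S_i\times S_{n-i}}(V_i\boxtimes W_{n-i}),
\]
while the module $V_m\otimes U_m$ (respectively $W_m\otimes U_m$) has annihilator $(I\Join P)_m$ (respectively $(J\Join P)_m$) by the defining property of $\Join$.

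First I would expand the left-hand side. Using the projection formula $\Ind(M)\otimes N\cong\Ind(M\otimes \Res N)$ applied summand-by-summand,
\[
((I\dagger J)\Join P)_n\;=\;\Ann_{\bk S_n}(X_n\otimes U_n)\;=\;\bigcap_{i}\Ann_{\bk S_n}\Ind^{S_n}_{S_i\times S_{n-i}}\bigl((V_i\boxtimes W_{n-i})\otimes \Res^{S_n}_{S_i\times S_{n-i}}U_n\bigr).
\]
On the right-hand side,
\[
((I\Join P)\dagger (J\Join P))_n\;=\;\bigcap_{i}\Ann_{\bk S_n}\Ind^{S_n}_{S_i\times S_{n-i}}\bigl((V_i\otimes U_i)\boxtimes(W_{n-i}\otimes U_{n-i})\bigr),
\]
and the $(S_i\times S_{n-i})$-module in the brackets is canonically isomorphic to $(V_i\boxtimes W_{n-i})\otimes (U_i\boxtimes U_{n-i})$, with the internal tensor product using the diagonal action.

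By equation \eqref{EqInd} (with $S=\bk S_n$ and $R=\bk(S_i\times S_{n-i})$, which is indeed free as a right $R$-module), the annihilator of each induced module depends only on the annihilator of the module being induced. It therefore suffices to prove, for each $i$, that
\[
\Ann_{\bk S_i\otimes \bk S_{n-i}}\bigl((V_i\boxtimes W_{n-i})\otimes \Res^{S_n}_{S_i\times S_{n-i}}U_n\bigr)\;=\;\Ann_{\bk S_i\otimes \bk S_{n-i}}\bigl((V_i\boxtimes W_{n-i})\otimes (U_i\boxtimes U_{n-i})\bigr).
\]
Here I invoke the same mechanism that makes $\Join$ well-defined: for a group algebra $A$ (applied to $A=\bk(S_i\times S_{n-i})$), the annihilator of the tensor product of two $A$-modules under the diagonal action depends only on the two annihilators, via $\Delta^{-1}$ applied to \eqref{AnnTensor}. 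Thus the displayed equality reduces to
\[
\Ann_{\bk S_i\otimes \bk S_{n-i}}\Res^{S_n}_{S_i\times S_{n-i}}U_n\;=\;\Ann_{\bk S_i\otimes \bk S_{n-i}}(U_i\boxtimes U_{n-i}).
\]

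The main (and really the only) ingredient is precisely this last equality, which is the content of T-primeness of $P$: taking $U_m=\bk S_m/P_m$, both sides are computed directly from condition~(5) of Proposition~\ref{Prop:MIndec}. So the structure of the argument is: reformat both sides in terms of induced modules of tensor products with $U_n$ or with $U_i\boxtimes U_{n-i}$, use \eqref{EqInd} to strip off induction, use the Hopf structure of group algebras to strip off the inner tensor factor $V_i\boxtimes W_{n-i}$, and close the argument with a single application of Proposition~\ref{Prop:MIndec}(5). The step requiring the most care is the bookkeeping of which action on which tensor product is being used and the verification that the projection formula yields the right isomorphism of $S_n$-modules; everything else is formal.
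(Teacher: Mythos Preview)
Your proposal is correct and follows essentially the same route as the paper: expand both sides as intersections of annihilators of induced modules via the projection formula, then reduce the comparison to condition~(5) of Proposition~\ref{Prop:MIndec}. The paper compresses the intermediate reductions (stripping induction via \eqref{EqInd} and stripping the common tensor factor via the Hopf structure) into a single sentence, whereas you spell them out, but the argument is the same.
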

\begin{proof}
We can rewrite $((I\dagger J)\Join P)_n$ as
$$\bigcap_{i=0}^n\Ann\left(\Ind^{S_n}_{S_i\times S_{n-i}}\left((\bk S_{i}/I_i\boxtimes \bk S_{n-i}/J_{n-i}) \otimes \Res^{S_n}_{S_i\times S_{n-i}}\bk S_n/P_n\right)\right),$$
whereas $((I\Join P)\dagger (J\Join P))_n$ becomes
$$\bigcap_{i=0}^n\Ann\left(\Ind^{S_n}_{S_i\times S_{n-i}}\left((\bk S_i/I_i\otimes \bk S_i/P_i)\boxtimes (\bk S_{n-i}/J_{n-i}\otimes \bk S_{n-i}/P_{n-i})\right)\right).$$
The conclusion thus follows from condition \ref{DefT}(2)(b).
\end{proof}

\begin{theorem}
The set $\TSpec \bk S_\infty$ equipped with addition $\dagger$ and multiplication $\Join$ is a commutative semiring.
\end{theorem}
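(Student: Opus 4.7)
The plan is to reduce the semiring axioms to material already in hand and to a single closure verification. Commutativity and associativity of $\Join$ come from cocommutativity and coassociativity of the coproduct on $\bk S_\infty$, with the corresponding properties of $\dagger$ noted after its definition. Examples~\ref{ExPlus}(1) and \ref{ExDagger}(1) identify the multiplicative identity $J_+$ and the additive identity $\bk S_\infty$; Example~\ref{ExPlus}(2) shows $\bk S_\infty$ is absorbing for $\Join$; and the lemma immediately preceding this theorem is the distributive law. Both $\bk S_\infty$ and $J_+$ lie in $\TSpec\bk S_\infty$: the former by convention, the latter because maximal ideals are T-prime. What remains is to show that if $I,J\in\TSpec\bk S_\infty$ then both $I\Join J$ and $I\dagger J$ are T-prime.

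For both closures I would verify condition~(4) of Proposition~\ref{LemDefT}. Put $V_r:=\bk S_r/I_r$ and $W_r:=\bk S_r/J_r$, so that T-primeness of $I$ and $J$ says that $V_{m+n}$ and $V_m\boxtimes V_n$ (resp.\ $W_{m+n}$ and $W_m\boxtimes W_n$) have the same $\bk S_m\otimes\bk S_n$-annihilator.

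For $I\Join J$: note that $(I\Join J)_r=\Ann_{\bk S_r}(V_r\otimes W_r)$ with the diagonal internal tensor. The $\bk S_m\otimes\bk S_n$-annihilator of such an internal tensor product is computed through the coproduct and so depends only on the individual annihilators; hence I may replace $V_{m+n},W_{m+n}$ by $V_m\boxtimes V_n,W_m\boxtimes W_n$. Rebracketing $(V_m\boxtimes V_n)\otimes(W_m\boxtimes W_n)=(V_m\otimes W_m)\boxtimes(V_n\otimes W_n)$ and applying \eqref{AnnTensor} yields the right-hand side of~(4).

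For $I\dagger J$: write $M_r:=\bigoplus_i\Ind^{S_r}_{S_i\times S_{r-i}}(V_i\boxtimes W_{r-i})$, so $(I\dagger J)_r=\Ann M_r$. Applying Mackey to $\Res^{S_{m+n}}_{S_m\times S_n}\Ind^{S_{m+n}}_{S_k\times S_{m+n-k}}(V_k\boxtimes W_{m+n-k})$, the double cosets are indexed by a parameter $i$ and contribute summands induced from the Young subgroup $S_i\times S_{m-i}\times S_{k-i}\times S_{n-k+i}$ of the corresponding restrictions of $V_k$ and $W_{m+n-k}$. T-primeness lets me trade these restrictions for $V_i\boxtimes V_{k-i}$ and $W_{m-i}\boxtimes W_{n-k+i}$, and \eqref{EqInd} propagates the equality of annihilators through $\Ind$; the induction then factors as $\Ind^{S_m}_{S_i\times S_{m-i}}(V_i\boxtimes W_{m-i})\boxtimes\Ind^{S_n}_{S_{k-i}\times S_{n-k+i}}(V_{k-i}\boxtimes W_{n-k+i})$. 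Intersecting over $k$ and reindexing $j=k-i$ identifies the resulting annihilator with that of $M_m\boxtimes M_n$, and \eqref{AnnTensor} then gives the right-hand side of~(4). The main obstacle is the Mackey bookkeeping in this last step; conceptually both closure proofs rest on T-primeness being precisely the interchangeability of $V_r$ with $V_i\boxtimes V_{r-i}$ at the level of annihilators, and this interchangeability is respected by the two constructions underlying $\Join$ and $\dagger$ via \eqref{AnnTensor} and \eqref{EqInd}.
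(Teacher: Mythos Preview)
Your proposal is correct and follows essentially the same route as the paper's proof: both reduce to closure of $\TSpec\bk S_\infty$ under $\Join$ and $\dagger$, then verify condition~(4)/(5) of Proposition~\ref{LemDefT} by exploiting that T-primeness lets one replace $V_{m+n}$ by $V_m\boxtimes V_n$ at the level of annihilators, together with Mackey's formula for the $\dagger$ case. You spell out the Mackey bookkeeping and the reindexing $j=k-i$ explicitly, whereas the paper compresses this to ``follows similarly, by also applying Mackey's formula''; but the underlying argument is identical.
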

\begin{proof}
The only thing that remains to be proved is that $\dagger$ and $\Join$ restrict to operations
$$\TSpec\bk S_\infty\;\times\;\TSpec \bk S_\infty\;\to\; \TSpec \bk S_\infty.$$

The fact that $\Join$ preserves T-prime ideals can be reduced to the observation that the annihilator ideal in $\bk S_m\otimes \bk S_n$ of
$$\bk S_{m+n}/I_{m+n}\;\otimes\; \bk S_{m+n}/J_{m+n}$$
coincides with that of
$$ (\bk S_{m}/I_{m}\;\otimes\; \bk S_{m}/J_{m})\;\boxtimes\; (\bk S_{n}/I_{n}\;\otimes\; \bk S_{n}/J_{n}),$$
assuming that $I,J$ are T-prime.

The case $\dagger$ follows similarly, by also applying Mackey's formula.
\end{proof}

\begin{remark}
Already for $\mathrm{char}(\bk)=0$, the semiring $\TSpec \bk S_\infty$ is not topological for the Jacobson topology, as follows from the explicit description in the Section~\ref{Sec0}.
\end{remark}

\begin{remark}
For ideals $I,J$ it follows quickly that $\Phi(I\dagger J)$ and $\Phi(I\Join J)$ depend only on $\Phi(I)$ and $\Phi(J)$, so that $\TIndec\bk S_\infty$ becomes a quotient semiring of $\TSpec\bk S_\infty$. The operation $\Join$ sends two inductive systems $\Phi_1,\Phi_2$ to the inductive system containing all simple constituents of modules $D_1\otimes D_2$ with $D_1\in\Phi^n_1$, $D_2\in\Phi^n_2$.
\end{remark}

One can derive from Remark~\ref{rem:primenotsp} below, using \eqref{eq:Ver4}, that $I\Join J$ need not be semiprimitive when $I,J$ are.
\begin{question}
Is $I\dagger J$ semiprimitive whenever $I$ and $J$ are semiprimitive ideals (in $\TSpec$)?
\end{question}

\subsection{Growth rates of ideals}
\label{SecGrowth}

\subsubsection{} As a rough estimate of the size of a non-zero ideal $I<\bk S_\infty$, we set
$$\ddd(I)\;:=\; \max\{n\in\mZ_{>0}\mid I_n=0\}\;\in\;\mN,$$
with the understanding that the maximum of an empty set is $0$. Hence $\ddd(I)=0$ if and only if $I=\bk S_\infty$.
The following lemma is immediate.
\begin{lemma}\label{LemdAdd}
For non-zero ideals $I,J$ in $\bk S_\infty$, we have
$$\ddd(I\dagger J)\;\ge\; \ddd(I)+\ddd(J).$$
\end{lemma}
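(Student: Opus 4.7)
The plan is to show the stronger fact that if $\ddd(I)=m$ and $\ddd(J)=n$, then $(I\dagger J)_{m+n}=0$; this immediately yields the desired inequality on $\ddd$. First I would dispense with the degenerate case: if $\ddd(I)=0$ then either $I=\bk S_\infty$ (in which case $\bk S_\infty\dagger J = J$ by Example~\ref{ExDagger}, so the bound holds trivially), or $I_1\neq 0$ which again forces $I=\bk S_\infty$ since $\bk S_1 = \bk$. So I may assume $m,n\geq 1$, noting that $\ddd(I)=m$ in particular gives $I_m=0$ (and similarly $J_n=0$), because the chain $I_1 \subseteq I_2 \subseteq \cdots$ is monotone.

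Next, I would use the third formulation of $\dagger$ from the definition: for any choice of $\bk S_i$-module $V_i$ with $\Ann V_i = I_i$ and $\bk S_{n+m-i}$-module $W_{m+n-i}$ with $\Ann W_{m+n-i} = J_{m+n-i}$, we have
$$(I\dagger J)_{m+n} \;\subseteq\; \Ann_{\bk S_{m+n}} \Ind^{S_{m+n}}_{S_i\times S_{m+n-i}}(V_i\boxtimes W_{m+n-i})$$
for each $i$. Taking $i=m$, the hypotheses $I_m=0=J_n$ let me pick $V_m=\bk S_m$ and $W_n=\bk S_n$, the regular modules. Then the induced module is
$$\Ind^{S_{m+n}}_{S_m\times S_n}\bigl(\bk S_m\boxtimes \bk S_n\bigr) \;=\; \bk S_{m+n}\otimes_{\bk(S_m\times S_n)}\bk(S_m\times S_n) \;\cong\; \bk S_{m+n},$$
which is the regular $\bk S_{m+n}$-module. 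Its annihilator is zero, so $(I\dagger J)_{m+n}=0$ and hence $\ddd(I\dagger J)\geq m+n$.

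There is essentially no obstacle here; the only thing to be slightly careful about is matching up the degree conventions (the max of an empty set is $0$, and non-zero ideals can still be the identity ideal), and observing that the ``regular module induces to regular module'' identity supplies the required faithful module at index $i=m$.
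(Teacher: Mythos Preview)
Your argument is correct and is exactly the unwinding that the paper has in mind when it declares the lemma ``immediate'': pick the index $i=m$ in the defining intersection for $(I\dagger J)_{m+n}$, note that $I_m=0=J_n$ lets you take regular modules, and conclude that the induced module is the regular $\bk S_{m+n}$-module with zero annihilator. The only cosmetic point is that your treatment of the case $\ddd(I)=0$ is slightly roundabout: the paper already records (immediately after the definition of $\ddd$) that $\ddd(I)=0$ iff $I=\bk S_\infty$, so you can simply invoke that and Example~\ref{ExDagger}(1) without the separate ``$I_1\neq 0$'' clause.
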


\begin{corollary}
For a non-zero proper ideal $I<\bk S_\infty$ and $m,n\in \mN$, we have $I^{\dagger m}=I^{\dagger n}$ if and only if $m=n$.
\end{corollary}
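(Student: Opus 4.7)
The \emph{if} direction is immediate. For the converse, the plan is to show that the sequence $(\ddd(I^{\dagger m}))_{m\ge 0}$ is strictly increasing in $m$, which immediately yields $I^{\dagger m}\neq I^{\dagger n}$ whenever $m\neq n$.

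Since $I$ is a non-zero proper ideal, $I_1 = I\cap \bk$ is a proper ideal of the field $\bk$, hence zero; thus $\ddd(I)\ge 1$. Applying Lemma~\ref{LemdAdd} to $I^{\dagger m}$ and $I$ yields
$$\ddd(I^{\dagger(m+1)})\ =\ \ddd(I^{\dagger m}\dagger I)\ \ge\ \ddd(I^{\dagger m})+\ddd(I)\ \ge\ \ddd(I^{\dagger m})+1,$$
so $\ddd$ strictly increases by at least $1$ at each step of the sequence, as desired.

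The subtle point, and the main obstacle, is that Lemma~\ref{LemdAdd} is stated only for non-zero ideals, so the induction above tacitly requires $I^{\dagger k}\neq 0$ for every $k$. I would establish this separately by induction on $k$: for non-zero proper ideals $J_1,J_2$ it suffices to show $J_1\dagger J_2\neq 0$, and for this I would examine the defining intersection
$$(J_1\dagger J_2)_N\ =\ \bigcap_{i=0}^N \Ann_{\bk S_N}\,\Ind^{S_N}_{S_i\times S_{N-i}}(V_i\boxtimes W_{N-i}),$$
and argue that for $N>\ddd(J_1)+\ddd(J_2)$ every induced module on the right has dimension strictly less than $N!$, whereas a dimension or character-theoretic count (exploiting that the number of simples of $\bk S_N$ grows faster than the bounded-complexity set of simples forced to appear across all the finitely many terms of the intersection) ensures that the joint annihilator remains non-zero for $N$ sufficiently large. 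This non-vanishing verification is the heart of the matter; granted it, the strict growth of $\ddd$ delivers the corollary at once.
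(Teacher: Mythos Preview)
Your core strategy—using Lemma~\ref{LemdAdd} to show that $m\mapsto\ddd(I^{\dagger m})$ is strictly increasing—is precisely the argument the paper intends (the corollary is stated without proof immediately after the lemma). You are also right to flag that the lemma requires non-zero inputs, so one must know $I^{\dagger k}\neq 0$ for all $k$; indeed, if $I^{\dagger k_0}=0$ then $I^{\dagger k}=0$ for all $k\ge k_0$ (since $0\dagger J=0$, as the term $i=n$ in the defining intersection is already zero), which would falsify the corollary outright. The paper leaves this point implicit.

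Your proposed verification of $J_1\dagger J_2\neq 0$, however, is not a proof. That each individual induced module has dimension strictly less than $N!$ for $N>\ddd(J_1)+\ddd(J_2)$ is correct, but this does not prevent the direct sum over all $i$ from being faithful: a direct sum of non-faithful modules can certainly be faithful, and nothing you have written bounds the set of simple constituents appearing across \emph{all} the terms $i=0,\ldots,N$. The phrase ``bounded-complexity set of simples'' is doing all the work and is left unjustified. One clean route in characteristic zero: by Theorem~\ref{Prop:0}, any non-zero ideal $I$ has $\Phi(I)\subset\Phi_{(\infty^a,b^\infty)}$ for some $a,b$, hence $I\supset\mP_{a,b}$; since $\dagger$ is monotone for inclusion and $\mP_{a,b}^{\dagger k}=\mP_{ka,kb}\neq 0$, this gives $I^{\dagger k}\neq 0$. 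In positive characteristic one can instead invoke the additivity $\sss(I\dagger J)=\sss(I)+\sss(J)$ from Section~\ref{SecTC}, which places an explicit non-zero symmetriser in $I\dagger J$.
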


\subsubsection{} For T-prime ideals, we can define a more refined measure of the size. By definition, for $I\in \TSpec \bk S_\infty$, the canonical morphism
$$\bk S_m/I_m\otimes \bk S_n/I_n\;\to\; \bk S_{m+n}/I_{m+n}$$
is an inclusion, so
$$\dim_{\bk}(\bk S_m/I_m)\dim_{\bk}(\bk S_m/I_m)\;\le\; \dim_{\bk}(\bk S_{m+n}/I_{m+n}).$$
Hence, setting $g_n(I)=\dim (\bk S_n/I_n)$, by Fekete's lemma in~\cite[Lemma~1.6.3]{Be}, there is a well-defined limit
$$\gggr(I)\;=\; \lim_{n\to\infty}\sqrt[2n]{g_n(I)}\;=\; \sup\{\sqrt[2n]{g_n(I)}\mid n>0\}\;\,\in\;\mR_{\ge 0}\cup\{\infty\}.$$
As with $\ddd(I)$, higher $\gggr(I)$ implies smaller $I$. For example $\gggr(\bk S_\infty)=0$ and $\gggr(I)\ge 1$ otherwise.

It will follow from closer analysis that, if we exclude the case $I=0$, we have $\gggr(I)\in\mR_{\ge0}$ for all $I\in\TSpec\bk S_\infty$, see Corollary~\ref{Cor0} and Lemma~\ref{lemfin}.

\begin{lemma}\label{Lemgdagger}
For $I,J\in \TSpec\bk S_\infty$, we have
\begin{enumerate}
\item $\max\{\gggr(I),\gggr(J)\}\;\le\; \gggr(I\dagger J)\;\le\; \gggr(I)+\gggr(J)$;
\item $\gggr(I\Join J)\;\le\; \gggr(I)\gggr(J)$.
\end{enumerate}
\end{lemma}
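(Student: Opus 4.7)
My plan is to control $g_n(I\dagger J)$ and $g_n(I\Join J)$ by exhibiting small faithful modules over the respective quotient algebras. The lower bound in~(1) is immediate: taking $i=n$ in the defining intersection for $I\dagger J$ yields $M_n\cong \bk S_n/I_n$, whence $(I\dagger J)_n\subseteq I_n$; symmetrically $(I\dagger J)_n\subseteq J_n$. Hence $g_n(I\dagger J)\ge\max(g_n(I),g_n(J))$, giving the lower half of~(1) after extracting $(2n)$-th roots.

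The key auxiliary quantity is $h_n(I):=\sum_{\lambda\in\Phi(I)^n}\dim_\bk D^\lambda$. Since T-prime ideals are semiprimitive by Proposition~\ref{LemDefT}, one has $I_n=\bigcap_\lambda\Ann D^\lambda$, so $\bk S_n/I_n$ acts faithfully on $V_n:=\bigoplus_{\lambda\in\Phi(I)^n}D^\lambda$, giving $g_n(I)\leq h_n(I)^2$. Dually, the Wedderburn decomposition of $\bk S_n/I_n$ modulo its Jacobson radical forces $\sum_\lambda(\dim D^\lambda)^2 \leq g_n(I)$. Combining these with Cauchy--Schwarz and the fact that the number of (appropriate) partitions of $n$ grows sub-exponentially, I obtain the identity
$$\lim_{n\to\infty}h_n(I)^{1/n} \;=\; \lim_{n\to\infty}g_n(I)^{1/(2n)} \;=\; \gggr(I),$$
which is the delicate step; the remaining inequalities should then be essentially mechanical.

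For~(2), Example~\ref{ExPlus}(3) identifies $(I\Join J)_n$ with $\Ann_{\bk S_n}(V_n\otimes W_n)$ for the $V_n,W_n$ above, so $g_n(I\Join J)\leq(h_n(I)h_n(J))^2$ and the limit identity yields $\gggr(I\Join J)\leq \gggr(I)\gggr(J)$. For the upper bound in~(1), I form the module $N_n := \bigoplus_{i=0}^n \Ind^{S_n}_{S_i\times S_{n-i}}(V_i\boxtimes W_{n-i})$: by~\eqref{EqInd} and~\eqref{AnnTensor} each summand has the same annihilator as the corresponding $M_i$ in the definition of $I\dagger J$, so $\Ann_{\bk S_n}N_n=(I\dagger J)_n$ and $\bk S_n/(I\dagger J)_n$ acts faithfully on $N_n$. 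Then $g_n(I\dagger J)\leq(\dim N_n)^2$, and bounding the binomial sum $\sum_i\binom{n}{i}h_i(I)h_{n-i}(J)$ by a sub-exponential factor times $(\gggr(I)+\gggr(J)+2\varepsilon)^n$ via the limit identity produces $\gggr(I\dagger J)\le \gggr(I)+\gggr(J)$ after sending $\varepsilon\to 0$.
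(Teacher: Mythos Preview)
Your argument has a genuine gap in positive characteristic. You assert that because a T-prime ideal $I$ is semiprimitive in $\bk S_\infty$, one has $I_n=\bigcap_{\lambda\in\Phi(I)^n}\Ann D^\lambda$, i.e.\ that $\bk S_n/I_n$ acts faithfully on $V_n:=\bigoplus_\lambda D^\lambda$. But semiprimitivity of $I$ in $\bk S_\infty$ does \emph{not} force each $I_n$ to be semiprimitive in $\bk S_n$; equivalently, $\bk S_n/I_n$ can have nonzero Jacobson radical. For a concrete failure take $\mathrm{char}(\bk)=2$ and $I=\mP_{2,0}=\TAnn(\bk^2)$: then $I_2=0$, so $g_2(I)=2$, whereas $\bk S_2$ has only the trivial simple module, giving $h_2(I)=1$. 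Hence $g_2(I)>h_2(I)^2$ and $V_2$ is not a faithful $\bk S_2/I_2$-module.

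This undermines the proof in two places. First, $g_n(I)\le h_n(I)^2$ fails, so your limit identity is only justified in the direction $\limsup h_n(I)^{1/n}\le\gggr(I)$. Second, and more damaging, the identifications $(I\Join J)_n=\Ann(V_n\otimes W_n)$ and $\Ann N_n=(I\dagger J)_n$ are only inclusions $(I\Join J)_n\subseteq\Ann(V_n\otimes W_n)$ and $(I\dagger J)_n\subseteq\Ann N_n$, since $\Ann V_n\supsetneq I_n$ is possible. Thus your embeddings go the wrong way and you obtain \emph{lower} bounds on $g_n(I\Join J)$ and $g_n(I\dagger J)$ rather than the upper bounds you need. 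The paper avoids this entirely by working with the regular bimodules $\bk S_n/I_n$ themselves: the comultiplication gives an injection $\bk S_n/(I\Join J)_n\hookrightarrow\bk S_n/I_n\otimes\bk S_n/J_n$, hence $g_n(I\Join J)\le g_n(I)g_n(J)$; and for $\dagger$ the left $\bk S_n$-action on $\Ind^{S_n}_{S_i\times S_{n-i}}(\bk S_i/I_i\boxtimes\bk S_{n-i}/J_{n-i})$ commutes with the residual right $\bk S_i\otimes\bk S_{n-i}$-action, giving $g_n(I\dagger J)\le\sum_i\binom{n}{i}^2 g_i(I)g_{n-i}(J)$. In characteristic~$0$ your route is correct and essentially equivalent to the paper's.
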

\begin{proof}
The first inequality in (1) follows since $I\dagger J\subset I$. For the second inequality, by definition, we have an exact sequence
$$0\to (I\dagger J)_{n}\to \bk S_n\to \bigoplus_{i=0}^n \End_{\bk}\left(\Ind^{S_n}_{S_i\times S_{n-i}} (\bk S_i/I_i\boxtimes \bk S_{n-i}/J_{n-i})\right),$$
where the right arrow represents the action of $\bk S_n$. This action commutes with the obvious right $\bk S_i\otimes \bk S_{n-i}$-action on the induced module, so that the exact sequence can be adjusted to get the estimate
$$g_n(I\dagger J)\;\le\;\sum_{i=0}^n\binom{n}{i}^2 g_i(I)g_{n-i}(J)$$
and hence
$$\sqrt{g_n(I\dagger J)}\;\le\;\sum_{i=0}^n\binom{n}{i} \sqrt{g_i(I)}\sqrt{g_{n-i}(J)}.$$
The result then follows from a standard argument, see for instance the proof of \cite[Lemma~4.9(2)]{CEO}.

Part (2) follows more directly from the exact sequence
$$0\to (I\Join J)_n\to \bk S_n\to \bk S_n/I_n\otimes \bk S_n/J_n$$
where the right arrow is comultiplication followed by quotient map. 
\end{proof}

At least in some cases, the subadditivity of Lemma~\ref{Lemgdagger}(1) is actually additivity:

\begin{lemma}\label{Lemmag}
\begin{enumerate}
\item We have $\gggr(\mP_{m,0})=\gggr(J_+^{m\dagger})=m$.
\item If $\mathrm{char}(\bk)=0$, then $\gggr(\mP_{m,n})=m+n$.
\end{enumerate}
\end{lemma}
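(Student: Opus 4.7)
My plan is to establish matching upper and lower bounds for $\gggr$ in both parts.

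For the upper bounds, observe that $g_n(J_\pm) = 1$ for every $n$, so $\gggr(J_\pm) = 1$. Iterating Lemma~\ref{Lemgdagger}(1) then gives $\gggr(\mP_{m,0}) = \gggr(J_+^{\dagger m}) \le m$ in part~(1), and $\gggr(\mP_{m,n}) = \gggr(J_+^{\dagger m} \dagger J_-^{\dagger n}) \le m+n$ in part~(2).

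For the lower bounds, I will use the explicit description from Example~\ref{ExDagger}(2): $(\mP_{m,n})_i$ is the kernel of the signed permutation representation $\rho_i : \bk S_i \to \End_\bk(V^{\otimes i})$ with $V = \bk^{m|n}$, so that $g_i(\mP_{m,n})$ is the dimension of $\operatorname{im}(\rho_i)$. By Schur-Weyl duality (and its super analogue in part~(2)), this image coincides with $\End_S(V^{\otimes i})$, where $S := \End_{\bk S_i}(V^{\otimes i})$ is the (super) Schur algebra. In part~(1), Schur-Weyl surjectivity is valid in all characteristics, and because homomorphism spaces of torsion-free modules can only grow under base change from $\mathbb{Z}$ to $\bk$, I obtain the inequality
$$g_i(\mP_{m,0}) \;\ge\; \sum_{\lambda\vdash i,\,\ell(\lambda)\le m} (f^\lambda)^2,$$
with the right-hand side being the classical characteristic-zero value. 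In part~(2), super Schur-Weyl over characteristic zero directly identifies $g_i(\mP_{m,n}) = \sum_\lambda (f^\lambda)^2$, the sum ranging over $(m,n)$-hook partitions (those with $\lambda_{m+1} \le n$) of $i$.

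Retaining a single summand, it remains to show $f^\lambda \sim d^i/\mathrm{poly}(i)$ (with $d = m$ or $d = m+n$) for a well-chosen $\lambda$: the rectangular partition with $m$ rows of length $\lceil i/m\rceil$ in part~(1), and a balanced $(m,n)$-thick hook (roughly an $m\times a$ rectangle extended by an $n$-wide strip of height $b$, with $a,b\sim i/(m+n)$) in part~(2). The hook length formula together with Stirling's approximation yields this estimate, which produces $g_i \ge d^{2i}/\mathrm{poly}(i)$ and hence $\sqrt[2i]{g_i}\to d$. The main technical ingredient is the asymptotic hook-length calculation, which is classical; the most delicate conceptual point is the flat base change argument promoting the characteristic-zero lower bound to arbitrary characteristic in part~(1).
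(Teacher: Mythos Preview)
Your argument is correct and shares the overall architecture with the paper's proof (upper bound from Lemma~\ref{Lemgdagger}(1), lower bound via Schur--Weyl duality identifying $g_i$ with $\dim_\bk\End_{GL(V)}(V^{\otimes i})$), but the two differ in how the lower bound is extracted. The paper does not single out one partition: instead it uses that $V^{\otimes i}$ is semisimple over $GL(V)$ with at most $i^{m+n}$ isotypic components (by \cite{Berele}) and that its length $d_i$ satisfies $\lim_i\sqrt[i]{d_i}=m+n$ (citing \cite{COT}), so by pigeonhole some simple occurs with multiplicity at least $d_i/i^{m+n}$, giving $\dim\End_{GL(V)}(V^{\otimes i})\ge(d_i/i^{m+n})^2$. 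Your explicit hook-length/Stirling estimate for a single $f^\lambda$ is more self-contained, avoiding these external references at the cost of a direct asymptotic computation. For the passage to positive characteristic in part~(1), the paper invokes characteristic-\emph{independence} of $\dim_\bk\End_{GL(V)}(V^{\otimes i})$ via the fact that $V^{\otimes i}$ is tilting, whereas your semicontinuity inequality is weaker but suffices and is more elementary; the phrasing ``homomorphism spaces of torsion-free modules can only grow under base change'' should be tightened to the standard upper-semicontinuity of $\dim_\bk\Hom_{S_\bk}(M_\bk,N_\bk)$ for $M$ finitely presented over an integral form $S_\mZ$ of the Schur algebra.
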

\begin{proof}
We start with part (2). By Schur-Weyl duality, see for instance \cite{Berele}, and Example~\ref{ExDagger}(2), we have
$$\gggr(\mP_{m,n})\;=\;\lim_{i\to\infty}\sqrt[2i]{\dim_{\bk}\End_{GL(V)}(V^{\otimes i})},$$
for $V$ a supervector space of dimension $m|n$.
Now by \cite{Berele}, the $GL(V)$-representation $V^{\otimes i}$ is semisimple and the number of isomorphism classes of simple modules that can appear as direct summand in $V^{\otimes i}$ grows only polynomially in $i$, as it is bounded by $(i)^{m+n}$. On the other hand, with $d_i$ the length of $V^{\otimes i}$, we have 
$$\lim_{i\to\infty}\sqrt[i]{d_i}\;=\; m+n,$$
see for instance \cite[Proposition~3.1]{COT}. There must be a simple module that occurs at least $d_i/i^{m+n}$ times as direct summand in $V^{\otimes i}$, giving a lower bound for the dimension of $\End_{GL(V)}(V^{\otimes i})$. Combining all the above observations, we find that
$$m+n\;=\;\lim_{i\to\infty}\sqrt[2i]{(d_i/i^{m+n})^2}\;\le\;\gggr(\mP_{m,n}),$$
so the conclusion follows from Lemma~\ref{Lemgdagger}(1).

Part (1) follows from part (2), Schur-Weyl duality in positive characteristic and independence of $\dim_{\bk}\End_{GL(V)}(V^{\otimes i})$ on the characteristic (a well-known consequence of the fact that $V^{\otimes i}$ is a tilting module in a highest weight category with standard modules that have the same character in every characteristic).
\end{proof}

\begin{remark}
As observed in \cite{CEKO}, for $I=\mP_{m,n}$, the dimension of $\bk S_i/I_i$ s lower in positive characteristic than in characteristic zero, so a priori the inequality $\gggr(\mP_{m,n})\le m+n$ from Lemma~\ref{Lemgdagger} might be strict. Note however that $\gggr(\mP_{1,1})=2$ by \cite[Theorem~3.3.1]{CEKO}.
\end{remark}

\subsection{Tensor categories}
Let $\cC$ be a tensor category over $\bk$ and $X\in \cC$.

\subsubsection{} Following \cite[\S 6]{PolyFun}, to $X$ we associate the ideal
$$\TAnn(X)\;<\; \bk S_\infty.$$ Concretely, $\TAnn(X)$ is the kernel of the algebra morphism 
$$\bk S_\infty\;\to\; \varinjlim_{n}\End(X^{\otimes n})$$
coming from the braiding on $\cC$. Equivalently, $\TAnn(X)_n$ is the kernel of
$\bk S_n\to \End(X^{\otimes n}),$
which is the common kernel of all modules in $\mathbf{B}_X^n$, for $\mathbf{B}_X$ the lifted inductive system associated to $X$ in \cite[\S 3]{PolyFun}. By construction $\TAnn(FX)=\TAnn(X)$ for any tensor functor $F$ (which are exact and faithful by our conventions). 
We also let $\TSys(X)=\Phi(\TAnn(X))$ be the corresponding inductive system.
\color{black}

As proved in \cite[4.11 and 4.12]{CEO}, the category $\cC$ is `of moderate growth' if and only if $\TAnn(X)\not=0$ for all $X\in\cC$.

\subsubsection{} For a tensor category, we consider the split Grothendieck monoid $M^{\oplus}(\cC)$, which is isomorphic to the free monoid generated by the set of isomorphism classes of indecomposable objects in $\cC$. This is a commutative semiring, with product induced from the tensor product. The Grothendieck monoid $M(\cC)$ is isomorphic to the free monoid generated by the set of simple objects of $\cC$ and we have an obvious surjection $M^{\oplus}(\cC)\tto M(\cC)$.\color{black}

\begin{prop}\label{Lem:Tens} Let $\cC$ be a tensor category of moderate growth.
The association $X\mapsto \TAnn(X)$ induces a semiring morphism
$$\TAnn_{\cC}:\;M^{\oplus}(\cC)\;\to\; \TSpec\bk S_\infty,$$
and fits into a commutative square of semiring morphisms
\[\xymatrix{
M^{\oplus}(\cC)\ar[rr]^-{\TAnn}\ar@{->>}[d]\ar[drr]_{\TSys}&& \TSpec\bk S_\infty\ar@{->>}[d]^{\Phi}\\
M(\cC)\ar[rr]&&\TIndec\bk S_\infty.
}\]
\end{prop}

\begin{proof}
Firstly, $\TAnn(X)$ is T-prime, as follows for instance from (the proof of) \cite[Proposition~2.1.3]{CEOq}. It thus suffices to observe the identities
\begin{eqnarray*}
\TAnn(X\oplus Y)&=& \TAnn(X)\dagger \TAnn(Y),\\
\TAnn(X\otimes Y)&=&\TAnn(X)\Join \TAnn(Y),\\
\TAnn(0)&=& \bk S_\infty,\qquad \mbox{and}\quad \TAnn(\unit)\;=\; J_+,
\end{eqnarray*}
which follow by direct verification.

 To conclude the proof, we need to show that $\TSys(X)$ only depends on the simple constituents (with multiplicities) of $X$. This follows from the principle that if we consider a filtration on $X$, then the braiding on the associated graded $\bk S_n\to \End((\gr X)^{\otimes n})$ factors via $\bk S_n\to \End(X^{\otimes n})$, see the proof of \cite[Lemma~4.9]{CEO}. From this it follows that $\TAnn(\gr X)_n/\TAnn( X)_n$ is a nilpotent ideal, and hence $\TSys(X)=\TSys(\gr X)$.
\end{proof}

\begin{example}\label{ExsVec}
If $\cC=\sVec$ and $V\in\cC$ a supervector space of dimension $m|n$, then $\TAnn(V)=\mP_{m,n}$, as now follows from the definition in Example~\ref{ExDagger}(2).
\end{example}

\begin{prop}
The inductive system $\TSys(X)$ is equal to $K_0(\mathbf{B}_X)$.
\end{prop}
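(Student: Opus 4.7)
The plan is to unfold both sides of the claimed equality to a common statement about composition factors versus annihilators. By the characterization recalled in \S\ref{SecPhiI}, a simple $\bk S_n$-module $D^\lambda$ lies in $\Phi(\TAnn(X))^n$ precisely when $\TAnn(X)_n \subseteq \Ann_{\bk S_n}(D^\lambda)$, equivalently when $D^\lambda$ descends to a module over $\bk S_n/\TAnn(X)_n$. On the other hand, $D^\lambda \in K_0(\mathbf{B}_X)^n$ means by definition that $D^\lambda$ occurs as a composition factor of some $V \in \mathbf{B}_X^n$. Since, as recalled just before the proposition, $\TAnn(X)_n = \bigcap_{V \in \mathbf{B}_X^n} \Ann_{\bk S_n}(V)$, the task reduces to showing that these two conditions are equivalent.

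The inclusion $K_0(\mathbf{B}_X)^n \subseteq \Phi(\TAnn(X))^n$ is immediate: a composition factor $D^\lambda$ of some $V$ is annihilated by $\Ann(V)$, hence a fortiori by the smaller ideal $\TAnn(X)_n$.

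For the reverse inclusion, I would invoke the following general fact, which I would record as a short preliminary lemma: for a finite-dimensional $\bk$-algebra $A$ and any family $\{M_\alpha\}$ of $A$-modules, a simple $A$-module $D$ occurs as a composition factor of some $M_\alpha$ if and only if $\bigcap_\alpha \Ann_A(M_\alpha) \subseteq \Ann_A(D)$. Indeed, the quotient $B := A/\bigcap_\alpha \Ann_A(M_\alpha)$ is a finite-dimensional algebra acting faithfully on $\bigoplus_\alpha M_\alpha$; picking a primitive idempotent $e \in B$ with $Be/\operatorname{rad}(Be) \cong D$, faithfulness forces $e \cdot M_\alpha \neq 0$ for at least one $\alpha$, giving a nonzero homomorphism $Be \to M_\alpha$ whose kernel is contained in the unique maximal submodule $\operatorname{rad}(Be)$, so that the image is a submodule of $M_\alpha$ with top $D$. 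Applying this with $A = \bk S_n$ and the (finite, since $X^{\otimes n}$ has finite length in $\cC$) family $\mathbf{B}_X^n$ completes the proof.

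There is no substantial obstacle. The elementary lemma above is the only step worth isolating; everything else is a matter of matching the definitions of $\Phi(\TAnn(X))$, $K_0(\mathbf{B}_X)$, and the description of $\TAnn(X)_n$ as a common annihilator.
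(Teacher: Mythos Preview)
Your proof is correct and follows essentially the same approach as the paper, which simply cites the observations in \S\ref{SecPhiI}. The equivalence (i)$\Leftrightarrow$(iii) there already encodes the lemma you spell out: for $I_n=\bigcap_\alpha \Ann(M_\alpha)$, the condition $e_\lambda\notin I_n$ is exactly the existence of a nonzero map from the projective $\bk S_n e_\lambda$ into some $M_\alpha$, which is your argument with the primitive idempotent. Your parenthetical about $\mathbf{B}_X^n$ being finite is unnecessary (the lemma only needs $A=\bk S_n$ finite-dimensional, not the family), but harmless.
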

\begin{proof}
This follows from the observations in \ref{SecPhiI}.
\end{proof}

\begin{remark}\label{ggd}
We have
$$g_n(\Ann(X))\;\le\; \dim_{\bk}\End(X^{\otimes n})\;\le\; \ell(X^{\otimes n})^2$$
and therefore
$$\gggr(\Ann(X))\;\le\; \mathtt{gd}(X).$$
\end{remark}

\begin{remark}\label{rem:primenotsp}
Proposition~\ref{Lem:Tens} provides a setting to show that in positive characteristic $\TSpec\bk S_\infty\tto \TIndec\bk S_\infty$ is not a bijection, or in other words, there exist T-prime ideals in $\bk S_\infty$ that are not semiprimitive. Concretely, in the setting of Theorem~\ref{ThmVer}(1) below, we observe that $\TAnn(P)\not=\TAnn(\gr P)$, for $P$ the indecomposable projective object $P\in \Ver_4^+$ with associated graded $\gr P=\unit\oplus \unit$. This shows that the T-prime ideal $\TAnn(P)$ is not semiprimitive.
\end{remark}

\color{black}

\section{Characteristic zero}\label{Sec0}

In this section, we let $\bk$ be a field of characteristic 0.

\subsection{Classification of prime ideals}

We recall the classification of (prime) ideals in $\bk S_\infty$, as first obtained in~\cite{FL} or \cite{Br}, in a slightly different language.

\subsubsection{} Set $\mNinf=\mN\cup\{+\infty\}$. We define a {\bf partition} to be a non-increasing function 
$$\lambda:\;\mZ_{>0}\to \mNinf,\quad i\mapsto \lambda_i.$$
We set
$$|\lambda|\;=\;\sum_{i}\lambda_i\;\in\; \mNinf.$$
If $n=|\lambda|\in\mNinf$, then we will write $\lambda\vdash n$ and call $\lambda$ an $n$-partition.
We also denote by $\bP$ the set of all partitions, partially ordered for the inclusion order $\lambda\subset \mu$ (meaning $\lambda_i\le\mu_i$ for all $i$), with sub-poset $\bP_\infty$ of $\infty$-partitions.

We can reinterpret the complement of $\bP_\infty$ in $\bP$ as the Young graph $\bY$, an $\mN$-graded graph where $\bY_n$ consists of the $n$-partitions. The edges are the covers in the inclusion order, see \cite[Definition~3.2]{BO}.

\subsubsection{} A path $\p$ in the Young graph $\bY$ (called an infinite tableau in \cite{VK}) is a choice $\p(n)\in \bY_n$ for all $n\in\mZ_{>0}$ so that $\p(n)\subset \p(n+1)$.

 Let $\p$ be such a path in $\bY$. Since, for a fixed $i\in\mZ_{>0}$, we always have $\p(n)_i\le \p(n+1)_i$, we can define the `shape of the tableau $\p$'
$$\p(\infty)_i\:=\;\lim_{n\to\infty}\p(n)_i\;=\; \sup\{\p(n)_i\mid n\in\mZ_{>0}\}\;\in\; \mNinf.$$
This produces $\p(\infty)\in \bP_\infty$. Clearly any $\infty$-partitions can be written as such a limit of a path in the Young graph.

\begin{theorem}\label{Prop:0}
\begin{enumerate}
\item There is an inclusion order reversing bijection from $\bP_\infty$ to $\Spec \bk S_\infty$, given by $\lambda\mapsto I_\lambda$, where for all $n\in\mZ_{>0}$
$$I_\lambda\cap \bk S_n\;=\; \bigcap_{\mu\vdash n\mid \mu\subset \lambda}\Ann(D^\mu).$$
\item Every ideal in $\bk S_\infty$ is a finite intersections of prime ideals.
 \end{enumerate}
\end{theorem}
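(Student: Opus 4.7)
The plan is to route both statements through Theorem~\ref{ThmZal} and the combinatorial description of inductive systems in characteristic zero. Since $\bk S_n$ is semisimple, an inductive system $\Phi$ is completely determined by the subset $T = T(\Phi) := \bigcup_n\{\mu \vdash n \mid D^\mu \in \Phi^n\} \subset \bP$, and the defining branching condition becomes: $T$ is a down-set in $(\bP,\subset)$ in which every element is covered by some element of $T$. Because every ideal is semiprimitive in characteristic zero, Theorem~\ref{ThmZal} turns the problem into a combinatorial one about such subsets of $\bP$.

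For part~(1), I would assign to each $\lambda \in \bP_\infty$ the set $T_\lambda := \{\mu \in \bP \mid \mu \subset \lambda\}$. This is a down-set, and the successor condition holds because for any finite $\mu \subsetneq \lambda$ one can add a box at the smallest row $i$ with $\mu_i < \lambda_i$. Any two $\mu,\nu \in T_\lambda$ are dominated by their componentwise maximum, which again lies in $T_\lambda$, so $T_\lambda$ is indecomposable (in fact T-indecomposable). Conversely, for an indecomposable $T$, I would set $\lambda_i := \sup\{\mu_i : \mu \in T\} \in \mNinf$, giving $T \subset T_\lambda$ immediately and $\lambda \in \bP_\infty$ from the successor condition. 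The reverse inclusion $T_\lambda \subset T$ is the substantive classical input from~\cite{Br,FL,VK}: given a finite $\mu \subset \lambda$, indecomposability combined with Littlewood--Richardson-type combinatorics in the Young graph produces an element of $T$ simultaneously dominating each nonzero $\mu_i$, forcing $\mu \in T$ by down-closure. The formula for $I_\lambda \cap \bk S_n$ is then the characteristic-zero specialisation $I(\Phi)_n = \hat{I}(\Phi)_n$ recalled in~\ref{SecPhiI}, and the bijection reverses inclusions because $\lambda \subset \lambda'$ is equivalent to $T_\lambda \subset T_{\lambda'}$.

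For part~(2), given an ideal $I = I(\Phi)$ with $T = T(\Phi)$, I would encode $T$ by its complementary up-set $U := \bP \setminus T$. The decisive input is that Young's lattice is well-quasi-ordered under containment: by Higman's lemma applied to $(\mN,\le)$, finite tuples are well-quasi-ordered under subsequence embedding, and on non-increasing tuples (i.e.\ partitions) subsequence embedding coincides with $\subset$. Hence $U$ has only finitely many minimal elements $\mu^{(1)},\dots,\mu^{(k)}$. An $\infty$-partition $\lambda$ satisfies $T_\lambda \subset T$ iff $\lambda \not\supset \mu^{(i)}$ for every $i$, which is iff for each $i$ there is some $\phi(i) \in \supp(\mu^{(i)})$ with $\lambda_{\phi(i)} \le \mu^{(i)}_{\phi(i)} - 1$. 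The choices of $\phi \colon \{1,\dots,k\} \to \mZ_{>0}$ with $\phi(i) \in \supp(\mu^{(i)})$ form a finite set, and to each corresponds a largest compatible $\lambda^\phi$ given by $\lambda^\phi_j = \min\{\mu^{(i)}_{\phi(i)} - 1 \mid \phi(i) \le j\}$ (with value $\infty$ when the set is empty). Every $\mu \in T$ extends via a path using the successor condition to some $\lambda \in \bP_\infty$ with $T_\lambda \subset T$, and such $\lambda$ is dominated by some $\lambda^\phi \in \bP_\infty$. Hence $T = \bigcup_\phi T_{\lambda^\phi}$ is a finite union and correspondingly $I = \bigcap_\phi I_{\lambda^\phi}$, a finite intersection of primes.

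The main obstacle is the finiteness assertion in part~(2), which genuinely relies on the well-quasi-ordering of $\bP$: without this, up-sets with infinitely many minimal elements would force $T$ to decompose only as an infinite union, and the explicit enumeration of $\lambda^\phi$ would break down. The most delicate input in part~(1) is the reverse inclusion $T_\lambda \subset T$, which rests on classical Littlewood--Richardson or Young-graph combinatorics already contained in~\cite{Br,FL,VK}.
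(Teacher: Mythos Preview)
Your overall strategy is sound and your argument for part~(2) is both correct and more explicit than the paper's. The paper instead proves directly (Lemma~\ref{Lem:Incomp}) that antichains in all of $\bP$, including $\bP_\infty$, are finite, and derives both parts from the resulting Corollary~\ref{Cor:Lem}: every inductive system is a union of $\Phi_\lambda$'s (via paths), hence a finite such union, so an indecomposable one equals some $\Phi_\lambda$, and any system decomposes finitely. Your use of Higman's lemma for $\bY$ only, together with the concrete description of the maximal $\lambda^\phi$ via minimal elements of the complementary up-set, is a cleaner route to part~(2) and sidesteps the case analysis in Lemma~\ref{Lem:Incomp}.

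Two points need correction. First, the parenthetical ``(in fact T-indecomposable)'' is false: by Theorem~\ref{ThmClassT}, $\Phi_\lambda$ is T-indecomposable only for $\lambda$ of the form $(\infty^m,n^\infty)$; directedness under componentwise maximum yields ordinary indecomposability, not T-indecomposability. Second, deferring the reverse inclusion $T_\lambda \subset T$ in part~(1) to ``classical input from~\cite{Br,FL,VK}'' is the weakest link, since this is precisely the content to be proved. But you do not need the citation: your own part~(2) argument already shows that \emph{every} inductive system $T$ is a finite union $\bigcup_\phi T_{\lambda^\phi}$ with each $T_{\lambda^\phi}\subset T$, and applying this to an indecomposable $T$ forces $T = T_{\lambda^\phi}$ for some $\phi$. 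Reordering so that part~(2) comes first closes the gap without appeal to outside sources.
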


We will use the following technical lemma in the proof.

\begin{lemma}\label{Lem:Incomp}
Every set of incomparable partitions (for the inclusion order) is finite.
\end{lemma}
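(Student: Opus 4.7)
The plan is to show that $(\bP, \subset)$ is a well-quasi-order (wqo), equivalently, that every antichain is finite. The main input is Higman's lemma applied to $(\mN, \le)$: finite sequences in $\mN$ ordered by subsequence embedding form a wqo. For non-increasing finite sequences (i.e., finite partitions), the embedding and pointwise orders coincide --- if $\lambda_j \le \mu_{i_j}$ for some embedding with $i_j \ge j$ and $\mu$ is non-increasing, then $\mu_{i_j} \le \mu_j$, so $\lambda_j \le \mu_j$. Hence the subposet of finite partitions is a wqo under containment, proving the lemma in the case $\bP = \bY$.

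To extend to all of $\bP$, I would proceed by case analysis on two invariants of a partition $\lambda$: the number $k(\lambda) := \#\{i \mid \lambda_i = \infty\} \in \mNinf$ of infinite parts, and, when $k(\lambda) < \infty$, the value $c(\lambda) \in \mN$ at which the tail $(\lambda_{k(\lambda)+1}, \lambda_{k(\lambda)+2}, \ldots)$ stabilises. The partition $\omega = (\infty, \infty, \ldots)$ is the top element of $\bP$ and so appears in any antichain only as a singleton; thus in an infinite antichain $A$ every $\lambda$ has $k(\lambda) < \infty$. Next I would show $\{k(\lambda) : \lambda \in A\}$ is bounded: otherwise extract $\lambda^{(i)} \in A$ with $k_i := k(\lambda^{(i)}) \to \infty$, and inspect $\lambda^{(1)} \subset \lambda^{(i)}$ via the pointwise inequalities. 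Containment is forced as soon as $k_i$ exceeds the last position at which $\lambda^{(1)}$ strictly exceeds $c(\lambda^{(1)})$ and $c(\lambda^{(i)}) \ge c(\lambda^{(1)})$; antichainness therefore bounds $c(\lambda^{(i)})$ above by $c(\lambda^{(1)}) - 1$ for large $i$, and a pigeonhole plus a further application of the same containment check produces a forced containment anyway, a contradiction. So $k$ is bounded on $A$, and we may assume it is constant equal to some $k$.

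With $k$ constant, $\lambda \subset \mu$ is equivalent to containment of the finite-valued tails $\tau(\lambda) = (\lambda_{k+1}, \lambda_{k+2}, \ldots)$, reducing the problem to an infinite antichain of non-increasing $\mN$-valued sequences. A parallel dichotomy on the stabilisation value $c$ finishes: either $c$ is constant on an infinite subfamily, in which case subtracting $c$ from each tail yields an antichain of genuine finite partitions that contradicts the wqo of paragraph one; or $c$ is unbounded, in which case any element with large enough $c$ swallows a fixed element of the antichain, again forcing containment. The main obstacle is the bookkeeping in the variable-$k$ case: pointwise containment compares entries at the same index, so when $k(\lambda) < k(\mu)$ one compares the finite tail of $\lambda$ against the tail of $\mu$ at shifted positions, and one must track which coordinates are already dominated by infinities on the larger side. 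The uniform principle, though, is always the same --- enlarging $k(\mu)$ or $c(\mu)$ only makes $\lambda \subset \mu$ easier --- so an infinite antichain must keep both bounded, at which point Higman's lemma finishes the argument.
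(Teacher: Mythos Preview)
Your argument is correct, though written as an outline with some steps left sketchy. The high-level architecture matches the paper's: split on whether the number $k(\lambda)$ of infinite parts is bounded, strip off the infinite prefix when it is, and ultimately reduce to the classical fact that finite partitions under containment have no infinite antichain (which you derive from Higman's lemma, and the paper simply cites).

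The execution differs in both reductions. For the unbounded-$k$ case, the paper selects one partition with exactly $i$ infinite parts for each $i$ and extracts from the pairwise incomparability conditions an infinite strictly decreasing sequence in $\mN$; you instead pigeonhole on the stabilisation value $c$ and force a containment directly. For the bounded-$k$ case, after stripping the $\infty$'s the paper \emph{transposes} the resulting $\mN$-valued tails, so that a tail stabilising at some $c>0$ becomes a partition with $c$ infinite rows, and then reruns the Case~I/Case~II split; you argue directly that $c$ must be bounded (else a large-$c$ tail swallows a fixed one), pigeonhole to constant $c$, and subtract $c$ to land among finite partitions. Your route is more uniform and wqo-flavoured; the paper's transposition trick is slicker once Case~I is in hand, exploiting the row/column symmetry of Young diagrams.

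One minor point: your closing remark about comparing tails ``at shifted positions'' when $k(\lambda)<k(\mu)$ is off --- containment is coordinatewise at the same index throughout, with the first $k(\mu)$ inequalities $\lambda_j\le\infty$ simply automatic. This does not affect the argument.
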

\begin{proof}
To find a contradiction, let $S$ be an infinite set of incomparable partitions.
We distinguish into two cases. 

Case (I): There is no bound on the number of labels that can be $\infty$ for elements of $S$. In that case we can replace $S$ with a subset that contains, for each $i\in\mN$ at most one element of the form $(\infty^i,a_{i+1},\cdots)$ with $a_{i+1}\in\mN$. Purely for notational simplicity we assume that there is {\it precisely} one, for each $i\in\mN$.
Then
$$S\;=\;\{(\infty^i, a_{i,i+1},a_{i,i+2},\cdots),\; i\in\mN\}.$$
By incomparability, we need, for each $i<j$, to have $r_{i,j}>j$ with 
$$a_{i, r_{i,j}}> a_{j,r_{i,j}}.$$
But now, this implies
$$a_{0,r_{0,1}}> a_{1,r_{0,1}}\ge a_{1, r_{1,j}}>a_{2,r_{1,j}}$$
where we can take any $j\ge r_{0,1}$. Continuing like this, we create an infinite strictly decreasing chain (if we only consider the odd or only the even positions) of natural numbers, a contradiction.

Case (II): There is a bound on the number of labels that can be $\infty$ in every element of $S$. In that case, we can replace $S$ with an infinite subset such that every element has the same number, say $t\in\mN$, of labels equal to $\infty$. By definition of the inclusion order, we can actually remove the infinite part (the first $t$ labels of each element), and replace $S$ with an infinite set of incomparable partitions without $\infty$ labels. We can then replace all partitions by their transpose. Since Case (I) leads to a contradiction, we are again in Case (II). Applying the same procedure we just outlined again then produces an infinite set of incomparable {\em finite} partitions. That this is impossible is classical, see for instance the remark in \cite[\S3]{FL}.
\end{proof}

\begin{corollary}\label{Cor:Lem}
For $\lambda\in\bP_\infty$, let $\Phi_\lambda$ be the inductive system corresponding to all $D^\mu$ with $\mu\subset\lambda$. Every union
$\cup_{\lambda\in S}\Phi_\lambda$ is equal to $\cup_{\lambda\in S_0}\Phi_\lambda$ for a finite subset $S_0\subset S$.
\end{corollary}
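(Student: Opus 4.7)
The plan is to reduce the claim to Lemma~\ref{Lem:Incomp} by extracting a finite antichain of partitions in $\bP_\infty$ whose associated $\Phi$-sets cover the union $U := \bigcup_{\mu \in S} \Phi_\mu$. The natural setting for this is not $S$ itself but its saturation $\hat S := \{\lambda \in \bP_\infty \mid \Phi_\lambda \subset U\}$, which contains $S$, has the same associated union, and is amenable to Zorn's lemma.

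The key structural step is to verify that $\hat S$ is closed under suprema of ascending chains, so that Zorn's lemma applies. Given an ascending chain $\lambda^{(1)} \subset \lambda^{(2)} \subset \cdots$ in $\hat S$, I would set $\lambda^{(\infty)}_i := \sup_n \lambda^{(n)}_i \in \mNinf$: this is weakly decreasing, hence a partition, and lies in $\bP_\infty$ since $|\lambda^{(\infty)}| \geq |\lambda^{(1)}| = \infty$. Any finite $\mu \subset \lambda^{(\infty)}$ has only finitely many nonzero parts, each bounded by $\lambda^{(n)}_i$ for sufficiently large $n$, so $\mu \subset \lambda^{(n)}$ for a single large enough $n$, giving $\mu \in \Phi_{\lambda^{(n)}} \subset U$. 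Hence $\Phi_{\lambda^{(\infty)}} \subset U$, i.e.\ $\lambda^{(\infty)} \in \hat S$. Zorn's lemma then yields a set $\hat S^{\max}$ of maximal elements of $\hat S$, with every $\lambda \in \hat S$ lying below some $\lambda' \in \hat S^{\max}$. Being an antichain in $\bP_\infty$, $\hat S^{\max}$ is finite by Lemma~\ref{Lem:Incomp}, and so $U = \bigcup_{\lambda \in \hat S^{\max}} \Phi_\lambda$ is the desired finite union.

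The main obstacle I foresee lies in the formal requirement that $S_0 \subset S$: the maximal elements of $\hat S$ need not lie in $S$ itself (for instance, if $S = \{(\infty, 1^k) \mid k \geq 0\}$, then $\hat S^{\max} = \{(\infty, 1, 1, \ldots)\}$ is disjoint from $S$). The corollary is therefore to be read in contexts where $S$ is already saturated, $S = \hat S$---for example when $S$ consists of all $\lambda \in \bP_\infty$ for which $\Phi_\lambda$ is contained in a given inductive system, which is the relevant case for the proof of Theorem~\ref{Prop:0}. In that case $\hat S^{\max} \subset S$ automatically, and the argument above yields the corollary as stated.
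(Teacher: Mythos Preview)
Your argument is correct and is the natural way to deduce the corollary from Lemma~\ref{Lem:Incomp}; the paper leaves the proof implicit, and your passage to the saturation $\hat S$, verification that chains in $\hat S$ have suprema in $\hat S$, and application of Zorn's lemma together with Lemma~\ref{Lem:Incomp} is exactly what is required.

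You are also right to flag the literal condition $S_0\subset S$: your example $S=\{(\infty,1^k)\mid k\ge 0\}$ is a genuine counterexample to the statement as written, since any finite $S_0\subset S$ bounds the number of parts and so misses $(1^{K+2})$ for $K$ large. The corollary should be read either with $S_0$ an arbitrary finite subset of $\bP_\infty$, or under the standing assumption $S=\hat S$. Both invocations in the proof of Theorem~\ref{Prop:0} take $S$ to be of the form $\{\lambda\in\bP_\infty\mid \Phi_\lambda\subset\Phi\}$ for a fixed inductive system $\Phi$, which is already saturated, so nothing is lost there.
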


\begin{proof}[Proof of Theorem~\ref{Prop:0}]Most results follow from Theorem~\ref{ThmZal}, which we use freely.

By replacing $D^\lambda$ by $\lambda$, we obtain a bijection between inductive systems and `saturated' ideals in the poset $\bY\subset\bP$. Here saturated means that removing any element produces a subset that is no longer an ideal.

For a non-empty inductive system $\Phi$, we can consider a path $\p$ in $\bY$ that is contained in $\Phi$ and set $\lambda:=\p(\infty)$. Then $\Phi_\lambda\subset\Phi$, so $\Phi$ is a union of such systems. For part (1) it suffices to show that the systems $\Phi_\lambda$ are indecomposable.

If a system $\Phi_\lambda$ were not indecomposable, by Corollary~\ref{Cor:Lem} we could write it as a \textit{finite} union of (smaller) systems $\Phi_\mu$, which one can easily disprove.

Part (2) now also follows from Corollary~\ref{Cor:Lem}.
\end{proof}

%

\begin{example}
We have
$$I_{(\infty^\infty)}\;=\; 0,\quad I_{(\infty,0^\infty)}=J_+,\quad\mbox{and}\quad I_{(1^\infty)}=J_-$$
and more generally
$$I_{\infty^m,n^\infty}\;=\;\mP_{m,n}=(J_+)^{\dagger m}\dagger (J_-)^{\dagger n},$$
as follows from follows from Example~\ref{ExDagger}(2) and \cite[Theorem~3.20]{Berele}.
\end{example}

\subsection{T-prime ideals}

%
%

In characteristic zero, the T-prime ideals have many alternative characterisations, see also Theorem~\ref{ThmSph} below.

\begin{theorem}\label{ThmClassT}
The following conditions are equivalent on a non-zero prime ideal $I<\bk S_\infty$.
\begin{enumerate}
\item $I$ is T-prime;
\item $I$ does not have any cover in the inclusion order on $\Spec \bk S_\infty$;
\item $I=\mP_{m,n}$ for $(m,n)\in \mN\times\mN$;
\item $I$ is not the (ordinary, i.e. $+$) sum of two other ideals;
\item $I=\TAnn(X)$ for some object $X$ in a tensor category $\cC$ over $\bk$.
\end{enumerate}
\end{theorem}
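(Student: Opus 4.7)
The plan is to use Theorem~\ref{Prop:0} to write any prime ideal as $I_\lambda$ for a unique $\lambda\in\bP_\infty$, and to establish the chain (5)$\Rightarrow$(1), (3)$\Rightarrow$(5), (1)$\Rightarrow$(3), together with (3)$\Leftrightarrow$(2) and (3)$\Leftrightarrow$(4). The implication (5)$\Rightarrow$(1) is immediate from Proposition~\ref{Lem:Tens}. For (3)$\Rightarrow$(5), Example~\ref{ExsVec} realises $\mP_{m,n}=\TAnn(V)$ for $V\in\sVec$ of dimension $m|n$, and the zero ideal is realised as $\TAnn$ of the defining object in an appropriate Deligne interpolation category $\underline{\mathrm{Rep}}(\mathrm{GL}_t)$ with generic $t$, where every simple $\bk S_n$-module appears as a summand of $V^{\otimes n}$.

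For (3)$\Leftrightarrow$(2) and (3)$\Leftrightarrow$(4) I classify $\bP_\infty$: every $\lambda$ is either $(\infty^\infty)$, a $\mP$-shape $(\infty^m,n^\infty)$, or has a \emph{finite middle} $\lambda=(\infty^m,\lambda_{m+1},\dots,\lambda_k,L^\infty)$ with $k>m$ and $\lambda_k>L$. In the finite-middle case, decrementing $\lambda_k$ produces a maximal $\mu\subsetneq\lambda$ in $\bP_\infty$, giving a cover of $I_\lambda$, and one can write $\lambda=\mu^{(1)}\cap\mu^{(2)}$ (pointwise minimum) with both $\mu^{(i)}\supsetneq\lambda$---for instance raising $L$ in one and lifting an interior entry to $\infty$ in the other---producing a non-trivial $I_\lambda=I_{\mu^{(1)}}+I_{\mu^{(2)}}$. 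These negate (2) and (4) together. In the $\mP$-shape or $(\infty^\infty)$ case, the identity $\Phi(J_1+J_2)=\Phi(J_1)\cap\Phi(J_2)$ (valid since each $\bk S_n$ is semisimple) together with Theorem~\ref{Prop:0}(2) reduces arbitrary ideal sums to intersections of principal systems $\Phi_\mu$, and the non-increasing constraint on partitions rules out any non-trivial intersection equalling $\lambda$; likewise no maximal strict sub-partition exists because the eventual run of $n$'s always extends further.

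The main obstacle is (1)$\Rightarrow$(3). By Proposition~\ref{LemDefT} this reduces to showing $\Phi_\lambda$ is not T-indecomposable when $\lambda$ has a finite middle row. Setting $M:=\lambda_{m+1}$ and $h:=\max\{i:\lambda_i\geq M\}$, I test $\mu=\nu=(M^h)\subseteq\lambda$. Any admissible $\kappa$ has $\kappa_i=M$ for $m+1\leq i\leq h$, so the skew $\kappa/\mu$ splits into a top piece (rows $\leq m$, columns $>M$) and a bottom piece (rows $>h$, columns $\leq\lambda_{h+1}<M$). In a Littlewood--Richardson filling with content $(M^h)$, the lattice-word condition applied to row $1$ forces the top piece to consist entirely of $1$'s (otherwise the reverse reading word starts with a symbol $\geq 2$); column-strictness in the first column of the bottom piece restricts its content to values in $\{1,\dots,h\}$, and pigeon-holing the remaining content $M(h-1)$ of values $>1$ into the bottom piece fails because of the bound $\sum_{i>h}\kappa_i\leq h\lambda_{h+1}\leq hL$ combined with $M>L$. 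The hard part is the detailed verification of this LR-lattice incompatibility, tracking the interplay between top overflow, required content, and the bottom-strip bound across all intermediate row values $\lambda_{m+1},\dots,\lambda_k$; the case $h=m+1$ of a single middle row exhibits the mechanism, and the general case follows by the same pigeonhole principle once one notes that the forced equalities $\kappa_i=M$ concentrate all slack in the top and bottom strips.
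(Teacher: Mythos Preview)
Your overall architecture matches the paper's: the cycle (3)$\Rightarrow$(5)$\Rightarrow$(1)$\Rightarrow$(3) together with (2)$\Leftrightarrow$(3) and (3)$\Leftrightarrow$(4), all via the parametrisation of $\Spec\bk S_\infty$ by $\bP_\infty$. Your handling of the zero ideal in (3)$\Rightarrow$(5) via a Deligne interpolation category is in fact more careful than the paper, which only cites Example~\ref{ExsVec}; the point that one needs a tensor category \emph{not} of moderate growth to realise $\TAnn(X)=0$ is worth making explicit. The arguments for (2)$\Leftrightarrow$(3) and (3)$\Leftrightarrow$(4) are essentially the paper's (removable versus addable boxes), phrased slightly differently.

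The genuine gap is in your Littlewood--Richardson argument for (1)$\Rightarrow$(3). Your claim that ``the top piece consists entirely of $1$'s'' is false as soon as $m\ge 2$: the Yamanouchi condition only forces \emph{row~$1$} to be filled with $1$'s, and more generally the standard fact is that every entry of an LR tableau lying in row~$i$ is at most~$i$, so the top piece (rows $1,\dots,m$) carries labels in $\{1,\dots,m\}$, not just $\{1\}$. Consequently your pigeonhole count is wrong: labels $2,\dots,m$ need not migrate to the bottom piece, and the inequality you invoke, $M(h-1)>hL$, simply fails in examples such as $\lambda=(\infty^2,6,5^\infty)$ (here $M=6$, $h=3$, $L=5$, and $12<15$). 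The correct count tracks only the labels strictly greater than~$m$: there are $M(h-m)$ of these, they are forced into the bottom piece, and column-strictness allows at most $h-m$ per column over at most $\lambda_{h+1}$ columns, giving $M(h-m)\le(h-m)\lambda_{h+1}$, contradicting $M>\lambda_{h+1}$. With this repair your test partition $(M^h)$ works; the paper instead takes $\mu=((n+a_1)^{m+1},n+a_2,\dots,n+a_r)$, so that rows $m{+}1,\dots,m{+}r$ of $\kappa/\mu$ are forced empty and one only has to track the single label $m{+}1$, making the LR obstruction a one-line width count. Either test partition suffices once the ``entries in row $i$ are $\le i$'' fact is used.
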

\begin{proof}
First we prove that (1) implies (3). 
Take an $\infty$-partition $\lambda$ not of the form $\lambda=(\infty^m, n^\infty)$ or $(\infty^\infty)$. We need to show that $I_\lambda$ is not T-prime. There are (unique) $m,n\in\mN$ (not both zero), so that
$$\lambda=(\infty^m,n+a_1,n+a_2,\ldots, n+a_r, n^\infty),$$
for a non-empty (finite) partition $(a_1,\ldots, a_r)$. We set
$$\mu:=((n+a_1)^{m+1},n+a_2,\ldots, n+a_r)\;\subset\;\lambda.$$
It follows from the Littlewood-Richardson rule that
$$\left[\Res^{S_{2d}}_{S_d\times S_d} D^\kappa \,:\, D^\mu\boxtimes D^\mu\right]\;=0,$$
whenever $\lambda\supset\kappa\vdash 2d$, with $d=|\mu|$. Indeed, in order to form the required LR tableau, the Yamanouchi word condition implies that we need to place $n+a_1$ labels `$m+1$' in the part of the skew tableau $\kappa\backslash \mu$ that can only be $n$ wide. This forces $a_1=0$, a contradiction.  Hence $\Phi^\lambda$ is not T-indecomposable, so $I_\lambda$ is not T-prime by Proposition~\ref{Prop:MIndec}.

That (3) implies (5) follows from Example~\ref{ExsVec}.

That (5) implies (1) is in Proposition~\ref{Lem:Tens}.

Equivalence between (2) and (3) follows from Theorem~\ref{Prop:0}(1). Indeed, $I_\lambda$ not having a cover is equivalent to there not being a partition $\nu\in \bP_\infty$ with the property that $\nu\subset\mu\subset\lambda$ implies either $\mu=\lambda$ or $\mu=\nu$. This simply means that $\lambda$ does not have `removable boxes', and the only such partitions are $(\infty^m, n^\infty)$ and $(\infty^\infty)$.

Finally, we prove equivalence between (3) and (4). By Theorem~\ref{ThmZal}, it suffices to prove that $\Phi_\lambda$ for an $\infty$-partition $\lambda$ can be written as an intersection of two inductive systems if and only if $\lambda$ is not of the form $(\infty^m,n^\infty)$. If $\lambda=(\infty^m,n^\infty)$, then every inductive system that strictly contains $\Phi_\lambda$ contains $((m+1)^{n+1})$. Conversely, if $\lambda$ is not of the form $(\infty^m,n^\infty)$, then it has $l>1$ addable boxes and we can write $\Phi_\lambda=\cap_\mu\Phi_\mu$ for $\mu$ ranging of the $l$ partitions obtained by adding one of those boxes.
\end{proof}

%
%

\subsection{Tensor categories}
Now assume that the field $\bk$ of characteristic zero is algebraically closed. By \cite{Del02}, any tensor category $\cC$ of moderate growth is a representation category of an affine group superscheme. Hence the ideal $\TAnn(X)$ of $X\in\cC$ is determined by the underlying supervector space of $X$.

\begin{theorem}\label{ThmsVec} Let $\cC$ be a tensor category of moderate growth and consider
$$\TAnn_{\cC}:\;M^{\oplus}(\cC)\to \TSpec\bk S_\infty.$$
\begin{enumerate}
\item $\TAnn_{\cC}$ factors via the Grothendieck semiring $M(\cC)$.
\item For $\cC=\sVec$ and $(m,n)\in\mN\times\mN$, we have
$$\TAnn([\bk^{m|n}])\;=\; \mP_{m,n}.$$
\item $\TAnn_{\cC}$ is injective if and only if $\cC=\Vecc$ or $\cC=\sVec$.
\item $\TAnn_{\cC}$ is an isomorphism of semirings if and only $\cC=\sVec$.
\end{enumerate}
\end{theorem}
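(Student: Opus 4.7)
The plan is to address the four parts in order, with the substance concentrated in part (3).

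For \textbf{(1)}, the key input is Deligne's classification: any tensor category $\cC$ of moderate growth in characteristic zero admits a tensor functor $F:\cC\to\sVec$. Since $\TAnn(X)=\TAnn(FX)$ for any tensor functor $F$ by construction, $\TAnn(X)$ depends only on the underlying supervector space of $FX$, hence only on the superdimension of $X$. Superdimension is additive on short exact sequences, so $\TAnn_{\cC}$ descends through the Grothendieck semiring $M(\cC)$. Part \textbf{(2)} is exactly Example~\ref{ExsVec}.

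For the ``if'' direction of \textbf{(3)}, the split Grothendieck monoid $M^{\oplus}(\cC)$ is free on the one generator $[\bk]$ when $\cC=\Vecc$ and on the two generators $[\bk],[\bar{\unit}]$ when $\cC=\sVec$. Its image under $\TAnn$ is therefore $\{\mP_{m,0}:m\in\mN\}$ in the first case and $\{\mP_{m,n}:(m,n)\in\mN\times\mN\}$ in the second. These ideals are pairwise distinct by the classification in Theorem~\ref{Prop:0}, since the example following that theorem identifies $\mP_{m,n}=I_{(\infty^m,n^\infty)}$ and distinct $\infty$-partitions give distinct prime ideals.

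For the ``only if'' direction of \textbf{(3)}, assume $\cC\notin\{\Vecc,\sVec\}$. Then $\cC$ contains an indecomposable object $X$ not isomorphic to $\unit$ or $\bar{\unit}$ --- otherwise, via Krull--Schmidt, every object of $\cC$ would be a direct sum of copies of $\unit$ and $\bar{\unit}$, forcing $\cC\in\{\Vecc,\sVec\}$. With $(m,n)$ the superdimension of $X$, parts \textbf{(1)}--\textbf{(2)} give $\TAnn(X)=\mP_{m,n}=\TAnn(\bk^{m|n})$, while $[X]\neq[\bk^{m|n}]$ in $M^{\oplus}(\cC)$ since the free monoid on indecomposable isomorphism classes distinguishes $[X]$ from $m[\bk]+n[\bar{\unit}]$. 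Hence $\TAnn_{\cC}$ fails to be injective. I expect this to be the main (though mild) obstacle, as one must verify the dichotomy carefully and appeal to Krull--Schmidt in the given setting.

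Finally, \textbf{(4)} combines the above with a surjectivity check. Injectivity from \textbf{(3)} already restricts candidates to $\Vecc$ and $\sVec$. For $\cC=\Vecc$, the image of $\TAnn$ is $\{\mP_{m,0}:m\in\mN\}$, which misses $J_-=\mP_{0,1}\in\TSpec\bk S_\infty$. For $\cC=\sVec$, Theorem~\ref{ThmClassT}(3) identifies $\TSpec\bk S_\infty$ with $\{\mP_{m,n}:(m,n)\in\mN\times\mN\}$ (recalling that the zero ideal is excluded and $\bk S_\infty=\mP_{0,0}$ is included), which is precisely the image of $\TAnn_{\sVec}$. Combined with injectivity from \textbf{(3)}, this gives the asserted semiring isomorphism.
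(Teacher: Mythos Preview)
Your treatment of (1), (2), (4), and the ``if'' direction of (3) is correct and matches the paper's. For the ``only if'' direction of (3) there is a genuine gap: you compare $[X]$ with $[\bk^{m|n}]=m[\unit]+n[\bar\unit]$ inside $M^\oplus(\cC)$, which presupposes that $\bar\unit$ lies in $\cC$. This can fail. For instance, take $\cC=\mathrm{Rep}(\mZ/4,\epsilon)$ with $\epsilon$ the element of order~$2$; the odd simples $L_1,L_3$ satisfy $L_1^{\otimes 2}\simeq L_3^{\otimes 2}\simeq L_2\not\simeq\unit$, so $\cC$ contains no object playing the role of $\bar\unit$. If you happen to choose $X=L_1$ (superdimension $(0,1)$), there is no $\bk^{0|1}$ in $\cC$ to compare against. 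The patch is not hard---in this example $L_2$ is a non-trivial indecomposable of superdimension $(1,0)$ with $\TAnn(L_2)=J_+=\TAnn(\unit)$---but a clean general argument requires either restricting the choice of $X$ or an additional case split that you have not supplied.

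The paper's route for (3) is different and sidesteps this issue. After noting the easy case where $\cC$ is the representation category of an ordinary affine group scheme, it argues that injectivity forces $\cC$ to have at most one non-trivial simple object, and then invokes \cite[Corollaire~0.8]{Del02} to identify $\cC$ with $\mathrm{Rep}(G,\epsilon)$ for a finite group $G$ with at most two conjugacy classes, hence $G\in\{1,C_2\}$. Your direct comparison with $\unit^m\oplus\bar\unit^n$ is more elementary when it applies, but the paper's structural argument handles all cases uniformly without needing $\bar\unit\in\cC$.
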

\begin{proof}
Part (1) can easily be seen directly, but also follows from the fact that $\TAnn_{\cC}$ factors via the forgetful functor to $\sVec$. Yet another proof follows from Proposition~\ref{Lem:Tens}, since in characteristic zero $\Phi$ is a bijection.

Part (2) follows from Example~\ref{ExsVec}. Hence $\TAnn_{\sVec}$ is an isomorphism by 
Theorem~\ref{ThmClassT} and consequently $\TAnn_{\Vecc}$ is injective. It now also follows that (4) holds if we prove (3).

The remaining direction of part (3) is trivial for the case where $\cC$ is the representation category of an affine group scheme, since then any representation and a trivial representation of the same dimension are sent to the same T-prime ideal. The general case follows with minor additional work. Indeed, one can quickly observe that for $\TAnn_{\cC}$ to be injective, $\cC$ can only have one non-trivial simple object. The conclusion then follows via \cite[Corollaire~0.8]{Del02} and the observation that the only finite group with precisely two conjugacy classes of elements is $C_2$.
\end{proof}

\begin{corollary}\label{Cor0}
The semiring $\TSpec \bk S_\infty$ is isomorphic to $\mN[x]/(x^2-1)$, which is isomorphic to the semiring with underlying additive semigroup $\mN\times\mN$ with product
$$(m,n)(m',n')\;=\; (mm'+nn',mn'+nm').$$
The map $\gggr:\TSpec \bk S_\infty\to\mN$, is the semiring homomorphism $(m,n)\mapsto m+n$.
\end{corollary}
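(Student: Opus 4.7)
The plan is to deduce the corollary by unpacking Theorem~\ref{ThmsVec}(4), which already supplies a semiring isomorphism
$$\TAnn_{\sVec}:\; M(\sVec) \xrightarrow{\sim} \TSpec \bk S_\infty.$$
So the task reduces to giving an explicit description of $M(\sVec)$ as a commutative semiring and translating the resulting isomorphism into the two alternative forms appearing in the statement.

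First I would describe $M(\sVec)$ explicitly. Since $\sVec$ has exactly two isomorphism classes of simple (equivalently, indecomposable) objects, namely $\unit$ and the odd line $\bar{\unit}$, the underlying additive monoid of $M(\sVec)$ is free on $\{[\unit],[\bar{\unit}]\}$, and the multiplication is determined by $\bar{\unit}^{\otimes 2}\simeq\unit$. Setting $x=[\bar{\unit}]$ therefore yields $M(\sVec)\simeq \mN[x]/(x^2-1)$. To pass to the $\mN\times\mN$ description, I would identify $m+nx$ with $(m,n)$ and expand
$$(m+nx)(m'+n'x)\;=\;(mm'+nn')+(mn'+nm')x,$$
which gives exactly the stated product rule. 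Under this composite isomorphism, the pair $(m,n)$ corresponds to $[\bk^{m|n}]=m[\unit]+n[\bar{\unit}]$, and therefore, by Theorem~\ref{ThmsVec}(2), to the T-prime ideal $\mP_{m,n}$.

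For the last sentence, Lemma~\ref{Lemmag}(2) gives $\gggr(\mP_{m,n})=m+n$, so $\gggr$ corresponds to $(m,n)\mapsto m+n$ under the above identification. That this is a semiring morphism is a direct verification on the $(m,n)$-description: additivity follows from $\mP_{m,n}\dagger\mP_{m',n'}$ being sent to $(m+m',n+n')$, and multiplicativity from the identity $(mm'+nn')+(mn'+nm')=(m+n)(m'+n')$. The corollary therefore follows by assembling Theorem~\ref{ThmsVec} and Lemma~\ref{Lemmag} into a clean algebraic form; the only substantive step proper to this statement is the elementary identification of $M(\sVec)$ with $\mN[x]/(x^2-1)$, and I do not anticipate any real obstacle.
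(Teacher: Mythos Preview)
Your proposal is correct and matches the paper's intended derivation: the corollary is stated without proof immediately after Theorem~\ref{ThmsVec}, and the content is precisely what you describe—combine the semiring isomorphism $\TAnn_{\sVec}$ from Theorem~\ref{ThmsVec}(4) with the elementary identification $M^{\oplus}(\sVec)\simeq\mN[x]/(x^2-1)$, then read off $\gggr(\mP_{m,n})=m+n$ from Lemma~\ref{Lemmag}(2). The only cosmetic point is that the paper phrases Theorem~\ref{ThmsVec} in terms of $M^{\oplus}(\sVec)$ rather than $M(\sVec)$, but as you implicitly use, these coincide for $\sVec$.
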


\section{The complex numbers}\label{SecComp}

When the base field is the field $\mC$ of complex numbers, there are interesting classes of simple unitary $S_\infty \times S_\infty$-representations. Prime ideals in $\bk(S_\infty\times S_\infty)$ restrict to prime ideals in $\bk S_\infty$, say for the left factor $S_\infty<S_\infty^{\times 2}$. Hence the left annihilator $\LAnn(V)<\bk S_\infty$ of such a representation~$V$ must be a prime ideal in the list in Theorem~\ref{Prop:0}(1), and similarly for the right annihilator. In this section we identify these ideals.

\subsection{Spherical representations}

Consider the {\bf Thoma simplex}
$$\Omega\;\subset\;[0,1]^{\mZ_{>0}}\times [0,1]^{\mZ_{>0}}$$
of pairs $\omega=(\alpha,\beta)$
satisfying
$$\alpha_1\ge\alpha_2\ge \alpha_3\ge\cdots, \quad \beta_1\ge \beta_2\ge \beta_3\ge\cdots,\quad\gamma(\alpha,\beta):=\sum_{i=1}^\infty (\alpha_i+\beta_i)\le 1.$$

By \cite[Theorem~7.20]{BO} there is a bijection
$\omega\mapsto  T^\omega,$
between $\Omega$ and the set of isomorphism classes of simple unitary {\bf spherical} $S_\infty\times S_\infty$-representations. Here `spherical' means there exists a spherical vector in the representation, which is a vector $v$ such that $gv=vg$ for all $g\in S_\infty$. Here, and below, we write the action of $g\in S_\infty=S_\infty\times \{e\}$ on the left, and denote the action of $h\in S_\infty= \{e\}\times S_\infty$ by writing $h^{-1}$ on the right.

\begin{theorem}\label{ThmSph}
Consider $\omega=(\alpha,\beta)\in \Omega$. If either $\gamma(\alpha,\beta)<1$ or $\alpha_i>0$ for all $i$ or $\beta_j>0$ for all $j$, then
$$\LAnn T^\omega\;=\;\RAnn T^\omega\;=\; 0.$$
In the remaining cases, 
$$\LAnn T^\omega\;=\;\RAnn T^\omega\;=\; \mathfrak{P}_{m,n},\quad\mbox{with}$$
$$m:=\max\{i\in\mN\mid \alpha_i\not=0\}\quad\mbox{and}\quad n:=\max\{i\in\mN\mid\beta_i\not=0\}.$$

\end{theorem}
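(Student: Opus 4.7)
The plan is to compute the inductive system $\Phi(\LAnn T^\omega)$ attached to the ideal and match it to the list of prime ideals in Theorem~\ref{Prop:0}. By \S\ref{SecPhiI}, $\Phi(\LAnn T^\omega)^n$ is the set of simple modules $D^\mu$ with $\mu\vdash n$ that occur as a constituent of $T^\omega$ viewed as a left $\mC S_n$-module. Since $T^\omega$ is the factor representation associated to the extreme character $\chi^\omega$ of $S_\infty$, this is equivalent to asking that $\chi^\mu$ occur with positive coefficient in the decomposition of the restriction $\chi^\omega|_{S_n}$ in the basis of normalised irreducible characters $\widetilde{\chi}^\mu:=\chi^\mu/\dim D^\mu$.

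The key tool is the Vershik-Kerov decomposition formula, which expresses these multiplicities as (positive multiples of) evaluations of supersymmetric Schur functions at the Thoma parameters: the coefficient of $\widetilde{\chi}^\mu$ in $\chi^\omega|_{S_n}$ is $(\dim D^\mu)\,s_\mu(\alpha;\beta;\delta)$, where $\delta:=1-\gamma(\alpha,\beta)$ plays the role of a continuous row parameter. The positivity properties of $s_\mu$ are classical: if $\delta>0$, or if $\alpha$ or $\beta$ has infinitely many nonzero entries, then $s_\mu(\alpha;\beta;\delta)>0$ for every partition $\mu$; on the other hand, for $\delta=0$ with $\alpha$ and $\beta$ supported on exactly $m$ and $n$ indices, $s_\mu(\alpha;\beta)>0$ precisely when $\mu$ fits in the infinite $(m,n)$-hook, i.e., $\mu_{m+1}\le n$, equivalently $\mu\subset(\infty^m,n^\infty)$.

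Combining these inputs: in the first three cases of the statement, $\Phi(\LAnn T^\omega)$ is the entire Young graph $\bY$, which by Theorem~\ref{Prop:0} corresponds to the $\infty$-partition $(\infty^\infty)$ and hence to $I_{(\infty^\infty)}=0$. In the remaining case, $\Phi(\LAnn T^\omega)=\{\mu\in\bY\mid\mu\subset(\infty^m,n^\infty)\}$, giving $I_{(\infty^m,n^\infty)}=\mP_{m,n}$, as identified in the example after Theorem~\ref{Prop:0}. For the right annihilator, the spherical-vector condition $gv=vg$ intertwines the left and right $\mC S_\infty$-module structures on $T^\omega$ through the anti-involution $g\mapsto g^{-1}$, and each of the ideals $0$ and $\mP_{m,n}$ is preserved by this anti-involution; hence $\RAnn T^\omega=\LAnn T^\omega$. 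The main obstacle is the positivity statement for $s_\mu(\alpha;\beta;\delta)$ at Thoma parameters in Step~2; once this is granted the identification of the prime ideal via Theorem~\ref{Prop:0} is routine.
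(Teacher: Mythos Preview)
Your proof is correct but computes $\supp(\omega)$ by a genuinely different route from the paper. Both arguments begin by identifying $\Phi(\LAnn T^\omega)$ with $\supp(\omega)$; the paper does this explicitly through the GNS computation $\LAnn T^\omega=J_{\chi^\omega}=I(\supp(\omega))$, while you invoke it more tersely as a standard property of the factor representation attached to a finite trace (this is fine, though the trace property of $\chi^\omega$ is what makes $e_\mu T^\omega=0$ equivalent to $\chi^\omega(e_\mu)=0$, and you might say so). The real divergence is in determining $\supp(\omega)$. The paper proceeds indirectly: via the Vershik--Kerov ergodic theorem for Gibbs measures on the path space of $\bY$ it shows (Lemma~\ref{Lem0}) that $\supp(\omega)$ always contains a path whose frequencies converge to $\omega$, and then builds explicit paths to obtain the hook description in Corollary~\ref{CorSupp}. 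You instead appeal to the closed formula expressing the branching coefficients as extended (super) Schur functions $s_\mu(\alpha;\beta;\delta)$ at the Thoma specialisation, together with the Berele--Regev--type positivity criterion for these. Your route is shorter and more computational, at the cost of importing the Schur-function formula as a black box; the paper's route avoids that formula entirely and isolates a probabilistic lemma of some independent interest. For the right annihilator, the paper simply repeats the left-sided calculation symmetrically, whereas your argument via the anti-involution $g\mapsto g^{-1}$ and the $\ast$-stability of $\mP_{m,n}$ is a legitimate shortcut.
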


\begin{remark}
Theorem~\ref{ThmSph} gives a negative answer to Zalesskii's question in \cite[Problem~5.12]{Za}, for the alternating group $A_\infty$. In fact, `$J_\chi=0$' (as defined in the proof of \ref{ThmSph} or \cite[Definition~5.8]{Za}) will be the case for `most' choices of extremal characters $\chi:A_\infty\to\mC$.
\end{remark}

Before we can give the proof of the theorem, we need to recall some definitions and results.

\subsubsection{} We follow the notion from \cite[Definition~1.7]{BO} of (extreme) characters $\chi: G\to\mC$ of groups $G$. For a finite group $G$ `extreme characters' are precisely the normalisation (such that $\chi(e)=1$) of usual (irreducible) characters, while `characters' now refer to positive linear combinations of usual (irreducible) characters that send the identity to $1$. We write $\chi^\lambda: S_n\to \mC$ for the function (extremal character) $\sigma\mapsto \chi(\sigma)/\chi(e)$, with $\chi$ the usual character of $D^\lambda$.

The set of characters $S_\infty\to\mC$ is in canonical bijection with the set of coherent systems of distributions on $\bY$, see \cite[Definition~3.4 and Proposition~3.5]{BO}, with an exchange of the `extremal' elements.

By Thoma's Theorem, see \cite[Theorems~3.12 and 3.15]{BO}, extreme characters of $S_\infty$ (or equivalently extreme coherent systems) are in bijection with elements of $\Omega$. We write the bijection as $\omega\mapsto \chi^\omega$. The representation $T^\omega$ is then characterised as follows: For a normalised spherical vector $v\in T^\omega$
\begin{equation}\label{eqsph}\chi^\omega(g)\;=\; (gv,v),\quad\mbox{ for all }\quad g\in S_\infty, \end{equation}
with $(\cdot,\cdot)$ the inner product of the Hilbert space $T^\omega$.

\subsubsection{}\label{limits} We will use an indirect description of the bijection $\omega\mapsto \chi^{\omega}$ mentioned above. Consider a path $\p$ in $\bY$. Although they are in general not well-defined, we can consider the pointwise limit function
\begin{equation}
\label{eqchi}\chi:=\lim_{n\to\infty} \chi^{\p(n)}\;:\; S_\infty\to\mC,
\end{equation}
and the limits
\begin{equation}\label{eqab}
\alpha_i\;:=\; \lim_{n\to\infty} \p(n)_i/n,\quad \beta_i\;:=\; \lim_{n\to\infty} (\p(n)^t)_i/n,\qquad i\in\mZ_{>0}.
\end{equation}
Vershik and Kerov proved in \cite[\S 5]{VK} that, for a given $\p$, the limit~\eqref{eqchi} exists if and only if the limits in~\eqref{eqab} exist. Moreover, we then have $\chi=\chi^\omega$, for $\omega:=(\alpha,\beta)\in \Omega$.

Moreover, as proved in \cite[Theorem~1]{VK}, every extremal character $\chi$ can be written as a limit \eqref{eqchi}, meaning the above completely describes the bijection. For our purposes, we need a slightly sharper statement, which can also be distilled from the work of Vershik and Kerov. To state it, we define the {\bf support} of a Thoma sequence $\omega\in \Omega$ as
the subset $\supp(\omega)\subset \bY$ of those $\lambda\vdash n$, for all $n$, such that $\chi^\lambda$ appears with non-zero coefficient in the expansion of $\chi^\omega|_{S_n}$ into (extremal) characters of $S_n$. The support corresponds to an inductive system, see \cite[Proposition~5.16]{Za}.

\begin{lemma}\label{Lem0}
Given $\omega\in \Omega$, there exists a path $\p$ in $\bY$ which is entirely contained in $\supp(\omega)$, so that $\omega$ is the limit obtained from $\p(n)$ as in \eqref{eqab}.
\end{lemma}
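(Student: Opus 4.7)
The plan is to exploit the probabilistic/ergodic description of extremal characters due to Vershik--Kerov, combined with the fact that the relevant Markov measure is concentrated precisely on paths through $\supp(\omega)$.

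First I would invoke the coherent system of probability measures $\{M_n\}_{n\ge 1}$ on $\bY_n$ attached to $\chi^\omega$ (via the bijection between characters of $S_\infty$ and coherent systems on $\bY$ recalled just before the statement). By the definition of support reproduced in the excerpt, $\lambda\in\supp(\omega)\cap\bY_n$ if and only if $M_n(\lambda)>0$, so $M_n$ is supported precisely on $\supp(\omega)\cap\bY_n$. Coherence of the system means that the down-transition probabilities $p(\lambda,\mu)$ (for $\mu\subset\lambda$, $|\lambda|=|\mu|+1$) are compatible with the $M_n$ in the usual Pascal-graph sense, and therefore assemble into a single probability measure $\mathbb{P}_\omega$ on the space of paths $\p=(\p(n))_{n\ge 1}$ in $\bY$ whose finite-dimensional marginals are the $M_n$.

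Next I would note the key observation: every path $\p$ lying in the support of $\mathbb{P}_\omega$ automatically satisfies $\p(n)\in\supp(\omega)$ for all $n$, because $\mathbb{P}_\omega(\{\p:\p(n)=\lambda\})=M_n(\lambda)$. So it suffices to produce a single $\mathbb{P}_\omega$-typical path whose row/column frequencies \eqref{eqab} converge to $\alpha,\beta$. This is precisely the content of the Vershik--Kerov ergodic method: because $\omega\in\Omega$ labels an \emph{extreme} character, the associated coherent system $\{M_n\}$ is extremal, equivalently $\mathbb{P}_\omega$ is ergodic with respect to the tail $\sigma$-algebra, and the strong law proved in \cite{VK} gives
\[
\lim_{n\to\infty}\frac{\p(n)_i}{n}=\alpha_i,\qquad \lim_{n\to\infty}\frac{(\p(n)^t)_i}{n}=\beta_i
\]
for $\mathbb{P}_\omega$-almost every $\p$ and every $i$.

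Finally, since the set of $\p$ on which these limits hold has full $\mathbb{P}_\omega$-measure and is in particular non-empty, I would pick any such $\p$; it lies entirely in $\supp(\omega)$ by the observation above, and produces $\omega$ as its limit in the sense of \eqref{eqab}. The only subtle point is justifying the simultaneous almost-sure convergence for all coordinates $i$, but this is standard given countability of $\mZ_{>0}$ and the per-$i$ a.s.\ convergence that is established in \cite[\S 5]{VK}; I expect this bookkeeping to be the only non-trivial step, the existence of the path then being immediate.
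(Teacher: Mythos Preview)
Your proposal is correct and follows essentially the same approach as the paper: both construct the central (Gibbs) measure $\mathbb{P}_\omega$ on paths from the coherent system, observe that cylinders through $\lambda\notin\supp(\omega)$ have measure zero, and invoke the Vershik--Kerov ergodic theorem to conclude that almost every path both stays in $\supp(\omega)$ and has the correct frequencies. The only minor imprecision is your final sentence: an arbitrary $\p$ in the full-measure limit set need not lie in the topological support of $\mathbb{P}_\omega$, so you should explicitly intersect with the (full-measure) set of paths through $\supp(\omega)$ before choosing $\p$ --- the paper handles this same point by a brief contradiction argument.
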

\begin{proof}
Following for instance \cite{VK}, the set $\cT(\bY)$ of paths on $\bY$ is a topological space by its inverse limit interpretation. Concretely, the open subsets are given by (finite) unions of cylinder subsets. A cyclinder subset of level $n$ is a subset comprising all paths that share a common initial segment of length~$n$.

Then there is a bijection between the set of coherent systems of distributions on $\bY$ and the set of Gibbs measures (called central Borel measures in \cite{VK}) on $\cT(\bY)$, see \cite[Proposition~7.11]{BO}. We describe one direction of the bijection. We start from a coherent system $\{P^{(n)}\}$, where $P^{(n)}$ is thus a probability measure on the finite set $\bY_n$. By definition of Borel measures, it is sufficient to determine the value of a Borel measure on the cylindrical sets. The Gibbs measure of the set corresponding to an initial segment
$$(\nu,\kappa,\ldots, \lambda\vdash n)$$ is given by $P^{(n)}(\lambda)$. 

Now, fix $\omega\in \Omega$, with corresponding $\chi^\omega$, coherent system $\{P^{(n)}\}$ and Gibbs measure $\mu^\omega$. By construction of $P^{(n)}$, we have $P^{(n)}(\lambda)\not=0$ if and only if $\lambda$ is in $\supp\omega$. It follows from the Vershik–Kerov ergodic theorem from \cite{VK}, see \cite[Corollary~7.19]{BO}, that $\mu^\omega$-almost every path $\p$ admits the limit \eqref{eqab}, producing $\omega$.

Now assume that the statement in the lemma is false. Then the set of paths with limit $\omega$ is contained in the union of cylinders corresponding to initial segments that end outside the support. By the above, this union has measure zero, contradicting the previous paragraph.
\end{proof}

\begin{remark}
The condition on $\p$ in Lemma~\ref{Lem0} is not automatic. For example $\p(n)=(n-1,1)$ is a path leading to the trivial character $\chi(\sigma)=1$ for all $\sigma\in S_\infty$. The lemma thus predicts that this $\chi$ can also be obtained as a limit from another path, and we can indeed take the path $\q(n)=(n)$.
\end{remark}

\begin{corollary}\label{CorSupp}
Consider $\omega=(\alpha,\beta)\in \Omega$. If either $\gamma(\alpha,\beta)<1$ or $\alpha_i>0$ for all $i$ or $\beta_j>0$ for all $j$, then
$\supp(\omega)=\bY$.
In the remaining cases, 
$$\supp(\omega)\;=\; \Phi^{(\infty^m,n^\infty)}$$
with $m,n$ as in Theorem~\ref{ThmSph}.
\end{corollary}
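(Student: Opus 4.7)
The plan is to combine Lemma~\ref{Lem0}, which realises $\omega$ as a limit of some path in $\supp(\omega)$, with Thoma's decomposition of $\chi^\omega|_{S_n}$ into extremal characters via supersymmetric Schur functions, to obtain respectively a lower and (in Case~2) an upper bound on the inductive system $\supp(\omega)$.

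For the lower bound I fix a path $\p$ in $\bY$ provided by Lemma~\ref{Lem0} and analyse the limiting $\infty$-partition $\p(\infty)\in\bP_\infty$. Any $\mu\in\bY$ with $\mu\subseteq\p(\infty)$ lies in $\supp(\omega)$, because $\mu\subseteq\p(N)$ for $N$ large enough (as $\p(N)_i$ is non-decreasing in $N$) and one can reach $\mu$ from $\p(N)$ by successively removing boxes inside the inductive system. It thus suffices to show $\p(\infty)\supseteq(\infty^\infty)$ in Case~1 and $\p(\infty)\supseteq(\infty^m,n^\infty)$ in Case~2. If every $\alpha_i>0$, then $\p(N)_i\to\infty$ for each $i$; dually, if every $\beta_j>0$, the relation $\p(N)^t_j\to\infty$ combined with $\{\p(N)^t_j\ge i\}\Leftrightarrow\{\p(N)_i\ge j\}$ gives the same conclusion. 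The technical sub-case is $\gamma<1$ with both $\alpha,\beta$ finitely supported, say with $\alpha_i=0$ for $i>m'$ and $\beta_j=0$ for $j>n'$. Here I consider the overflow partition $\nu(N)_i:=\max(\p(N)_{m'+i}-n',0)$, which captures the boxes outside the first $m'$ rows and first $n'$ columns; one has $|\nu(N)|\sim(1-\gamma)N>0$ while $\nu(N)_i=o(N)$ and $\nu(N)^t_j=o(N)$ for all $i,j$. A box-counting pigeonhole then forces $\nu(N)_i\to\infty$ for every $i$, giving $\p(\infty)=(\infty^\infty)$. In Case~2 ($\gamma=1$, with $m,n$ as in the statement) the same methods give $\p(\infty)_i=\infty$ for $i\le m$ (from $\alpha_i>0$) and $\p(\infty)_i\ge n$ for all $i$ (from $\beta_n>0$ via $\p(N)^t_n\to\infty$), so $\p(\infty)\supseteq(\infty^m,n^\infty)$.

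The reverse inclusion is only non-trivial in Case~2. For this I invoke Thoma's expansion $\chi^\omega|_{S_n}=\sum_{\mu\vdash n}(\dim D^\mu)\,s_\mu(\alpha_1,\ldots,\alpha_m;\beta_1,\ldots,\beta_n)\,\chi^\mu$ (with $\chi^\mu$ the normalised irreducible characters) and the classical hook-vanishing $s_\mu(\alpha;\beta)=0$ whenever $\mu_{m+1}>n$. This immediately yields $\supp(\omega)\subseteq\Phi^{(\infty^m,n^\infty)}$.

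The step I expect to be most delicate is the sub-case of the lower bound with $\gamma<1$ and finitely supported $\alpha,\beta$: Lemma~\ref{Lem0} provides no quantitative control beyond the first-order row and column densities, so one has to exploit the $(1-\gamma)N$ ``unaccounted'' boxes to place $\nu(N)$ in a genuinely Plancherel-like asymptotic regime and then run the counting argument carefully to deduce uniform growth of every row. The remaining sub-cases and the upper bound are direct consequences of Lemma~\ref{Lem0} and the hook-vanishing of super Schur functions respectively.
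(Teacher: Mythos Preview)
Your argument is correct but diverges from the paper's at two points. For Case~1, the paper argues by contrapositive: if $\supp(\omega)\neq\bY$ then (via the classification of inductive systems in characteristic zero) there exist $s,t$ with $\mu_{s+1}\le t$ for all $\mu\in\supp(\omega)$, and feeding the path from Lemma~\ref{Lem0} into a box count gives $\sum_{i\le s}\alpha_i+\sum_{j\le t}\beta_j=1$; this handles all three sub-cases at once and entirely sidesteps the overflow/pigeonhole step you need for the $\gamma<1$ sub-case. For the upper bound in Case~2, the paper does not invoke the super-Schur expansion; instead it exhibits an explicit path $\p$ with limit $\omega$ and $\p(\infty)=(\infty^m,n^\infty)$, and uses that pointwise convergence $\chi^{\p(N)}\to\chi^\omega$ forces convergence of the (finitely many) branching coefficients at each level, whence $\supp(\omega)\subset\Phi^{\p(\infty)}$. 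That argument stays entirely within the Vershik--Kerov approximation framework of \S\ref{limits}. Your route via Thoma's expansion and the Berele--Regev hook-vanishing is shorter once that formula is taken as known; indeed, the non-vanishing half of the same formula would also yield the Case~2 lower bound directly, so Lemma~\ref{Lem0} becomes dispensable there.
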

\begin{proof}
Assume first that the support of $\omega$ is not all of $\bY$. Then there are $s,t\in\mN$ so that $\mu_{s+1}\le t$ for all $\mu\in \supp(\omega)$. Let $\p$ be any path as in Lemma~\ref{Lem0}. It follows that
$$\sum_{i=1}^s\alpha_i+\sum_{j=1}^t\beta_j\;=\;\lim_{n\to\infty} \frac{\sum_{i=1}^s \p(n)_i+\sum_{j=1}^t (\p(n)^t)_j}{n}\;=\;1,$$
which proves the first claim. 

Now assume that $\gamma(\alpha,\beta)=1$ and $\alpha_i=0$ iff $i>m$ and $\beta_j=0$ iff $j>n$. By considering the obvious requirements for a path to lead to limit~ $\omega$ via~\eqref{eqab}, we can use Lemma~\ref{Lem0} to show that
$$\Phi^{(\infty^m,n^\infty)}\;\subset\; \supp(\omega).$$
To show equality, we can consider a path which, for $r\gg 0$,
is given by
$$\p(r)\;=\;(f(r),\lfloor r\alpha_2\rfloor,\ldots, \lfloor r\alpha_m\rfloor,\q(r)^t)$$
with
$$\q(r)\;=\;(\lfloor r\beta_1\rfloor-m,\ldots, \lfloor r\beta_n\rfloor -m),$$
and $f(r)\in\mN$ such that $\p(r)\vdash r$.
This path has limit $\omega$ under \eqref{eqab}, whence
$$\supp(\omega)\;\subset\; \Phi^{\p(\infty)}\;=\;\Phi^{(\infty^m,n^\infty)}.$$
This concludes the proof.
\end{proof}

\begin{proof}[Proof of Theorem~\ref{ThmSph}]
We can extend an extremal character $\chi$ to a $\mC$-linear function $\mC S_\infty\to\mC$ and set
$$J_\chi\;:=\;\{x\in \mC S_\infty\mid \chi(xg)=0,\; \forall g\in S_\infty\}.$$
Then, with $v$ the normalised spherical vector of $T^\omega$, and using \eqref{eqsph},
\begin{eqnarray*}
\LAnn T^\omega&=& \{x\in \mC S_\infty\mid (xgv, hv)=0,\;\;\forall g,h\in S_\infty\}\\
&=& \{x\in \mC S_\infty\mid (xgh^{-1}v,v)=0,\;\;\forall g,h\in S_\infty\}\;=\; J_{\chi^\omega}
\end{eqnarray*}
and similarly for the right annihilator.

Next we can observe that 
$J_{\chi^\omega}= I(\supp(\omega)), $
so that the conclusion follows from Corollary~\ref{CorSupp}.
\end{proof}

\subsection{Olshanski – Okounkov admissible representations}

Spherical representations belong to a broader class of `admissible' representations for $S_\infty\times S_\infty$. The classification of these representations was initiated by Olshanski and concluded by Okounkov in \cite{Ok}. Here, we determine their annihilator ideals, but we crucially use as input a result announced in \cite{VN} by Vershik and Nessonov without proof in the literature.

\subsubsection{}\label{SecTrOl} By a `Young distribution' of size $d\in\mN$, we refer to a function from $[-1,1]$ to $\bY$ such that the sum of all sizes of partitions in the image is $d$. In particular, for all but finitely many points in $[-1,1]$, the value is $\varnothing$.

By \cite[Theorem~3]{Ok}, the (isomorphism classes) of irreducible admissible representations are 
$V(\omega,\Lambda,M),$
where $\omega\in \Omega$, and $\Lambda$ and $M$ are two Young distributions of the same size $d$, called the depth of $V(\omega,\Lambda,M)$, such that
$$\begin{cases}\ell(\Lambda(x))+\ell(M(x))\;\le\; \sharp\{i\in\mZ_{>0}\mid \alpha_i=x\},&\; x>0,\\
\ell(\Lambda(x)^t)+\ell(M(x)^t)\;\le\; \sharp\{j\in\mZ_{>0}\mid \beta_j=-x\},&\; x<0.
\end{cases}$$

\subsubsection{}\label{ClaimVN} It was announced in \cite{VN} that the unitary representations of $\mC S_\infty$ given by (restriction to the left copy of $S_\infty$)
$$V(\omega,\Lambda,M)\quad\mbox{and}\quad\Ind^{S_\infty}_{S_r\times S_\infty[r]} (D^\lambda\boxtimes T^\omega)$$
are quasi-equivalent,
where $\lambda=\Lambda(0)$ and $r=|\lambda|$. \emph{Our proof of the second sentence in the following theorem is contingent on this claim.}

\begin{theorem}\label{ThmAdm} Consider an irreducible admissible representation $V(\omega,\Lambda,M)$ with $\omega=(\alpha,\beta)$.
If either $\gamma(\alpha,\beta)<1$ or $\alpha_i>0$ for all $i$ or $\beta_i>0$ for all $i$, then
$$\LAnn_{\bk S_\infty}V(\omega,\Lambda,M)\;=\;0\;=\;\RAnn_{\bk S_\infty}V(\omega,\Lambda,M).$$
In the remaining cases, set
$$m:=\max\{i\in\mN\mid \alpha_i\not=0\},\qquad n:=\max\{i\in\mN\mid\beta_i\not=0\},$$
$\lambda=\Lambda(0)$ and $\mu=M(0)$. Then
$$\LAnn_{\bk S_\infty} V(\omega,\Lambda,M)\;=\; I_{(\infty^m,n+\lambda_1,n+\lambda_2,\ldots)}\quad\mbox{and}$$
$$\RAnn_{\bk S_\infty} V(\omega,\Lambda,M)\;=\; I_{(\infty^m,n+\mu_1,n+\mu_2,\ldots)}.$$
\end{theorem}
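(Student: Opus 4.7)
My plan is to reduce the computation to the annihilator of an induced module via the Vershik--Nessonov quasi-equivalence from Section~\ref{ClaimVN}: $V(\omega,\Lambda,M)$ restricted to the left copy of $S_\infty$ is quasi-equivalent to $\mathcal{I} := \Ind^{S_\infty}_{S_r \times S_\infty[r]}(D^\lambda \boxtimes T^\omega)$, where $\lambda = \Lambda(0)$ and $r = |\lambda|$. Since quasi-equivalent unitary $\mC S_\infty$-representations share the same two-sided annihilator in the group algebra, computing $\LAnn V(\omega,\Lambda,M)$ reduces to $\Ann_{\bk S_\infty}(\mathcal{I})$. The right-annihilator case is the symmetric statement with $\mu = M(0)$ in place of $\lambda$, so it suffices to treat the left annihilator.

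Next I identify the inductive system $\Phi(\Ann(\mathcal{I}))$ via Theorem~\ref{ThmZal}. Using Mackey's formula applied to $\mathcal{I}|_{S_N}$ for $N$ large, together with Frobenius reciprocity and the Littlewood--Richardson rule, $D^\nu$ is an $\bk S_{|\nu|}$-subquotient of $\mathcal{I}$ if and only if $c^\nu_{\lambda,\kappa} \ne 0$ for some $\kappa \in \supp(\omega)$. Applying Corollary~\ref{CorSupp}: in the generic case where $\gamma(\alpha,\beta)<1$ or $\alpha_i>0$ for all $i$ or $\beta_i>0$ for all $i$ we have $\supp(\omega)=\bY$, so every simple eventually occurs, giving $\Ann(\mathcal{I})=0$. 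Otherwise $\supp(\omega)=\Phi^{(\infty^m,n^\infty)}$, and the $\kappa$'s that arise satisfy $\kappa_j \le n$ for all $j>m$.

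The crux is then the combinatorial identification that, as $\kappa$ ranges over $\{\kappa : \kappa_j \le n \text{ for } j>m\}$, the set of $\nu$ with $c^\nu_{\lambda,\kappa}\ne 0$ is precisely $\{\nu : \nu \subset (\infty^m, n+\lambda_1, n+\lambda_2, \ldots)\}$. For the upper bound $\nu_{m+i} \le n+\lambda_i$, I would stabilise $\kappa$ and use the identity $s_\kappa = (x_1\cdots x_N)^n \, s_{\kappa-n^N}$ in $N=m+\ell(\lambda)$ variables to reduce to $\kappa' := \kappa-n^N$ with $\ell(\kappa')\le m$; then via $c^{\nu'}_{\kappa',\lambda}=c^{\nu'}_{\lambda,\kappa'}$, any LR filling of $\nu'/\lambda$ with content $\kappa'$ uses labels $\le m$, so column-strictness forces each column of $\nu'/\lambda$ to have at most $m$ cells, equivalently $\nu'^t_c \le \lambda^t_c + m$, whence $\nu'_{m+i}\le\lambda_i$. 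For the reverse inclusion, one exhibits each admissible $\nu$ explicitly by taking a sufficiently large stable $\kappa$ and filling row $m+i$ of $\nu/\kappa$ uniformly with label $i$; the reading word $1^{\lambda_1}2^{\lambda_2}\cdots$ is manifestly Yamanouchi. Finally, Theorem~\ref{Prop:0}(1) delivers $\Ann(\mathcal{I})=I_{(\infty^m,n+\lambda_1,\ldots)}$. The main difficulty I anticipate is making the Mackey-type reduction precise for the non-compact stabiliser $H=S_r\times S_\infty[r]$, together with carefully threading through the Littlewood--Richardson combinatorics above.
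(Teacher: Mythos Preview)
Your strategy matches the paper's: reduce via the Vershik--Nessonov quasi-equivalence to the annihilator of an induced module, then identify the corresponding inductive system by Littlewood--Richardson combinatorics and invoke Theorem~\ref{Prop:0}. Two technical points are worth flagging. First, the paper handles the vanishing case \emph{without} the Vershik--Nessonov claim: Lemma~\ref{LemSub} gives $\LAnn V(\omega,\Lambda,M)\subset\LAnn T^\omega$ directly from the construction of admissible representations, so Theorem~\ref{ThmSph} already forces the annihilator to vanish in the generic case. This is why the paper can state that it is contingent on the (unpublished) claim of~\ref{ClaimVN} only in the finite-support case; your version relies on it throughout. Second, the paper sidesteps your anticipated Mackey computation for the non-compact stabiliser by applying equation~\eqref{EqInd}: since $\Ann_{\bk S_\infty}(\mathcal I)$ depends only on $\Ann_{\bk S_r\otimes\bk S_\infty[r]}(D^\lambda\boxtimes T^\omega)$, one may replace $T^\omega$ by the concrete module $\varinjlim_i D^{\p(i)}$ along any path with $\p(\infty)=(\infty^m,n^\infty)$, whereupon $\mathcal I$ itself becomes a direct limit of finite induced modules $\Ind^{S_{r+i}}_{S_r\times S_i}(D^\lambda\boxtimes D^{\p(i)})$. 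The combinatorial step then reduces to Lemma~\ref{Lem:LR}, which gives $\kappa_{m+i}\le n+\lambda_i$ in one line from the LR rule; this is cleaner than your symmetric-function route, where ``stabilising $\kappa$'' needs care because $|\kappa|=|\nu|-r$ is fixed once $\nu$ is.
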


\begin{remark}
Since there is no restriction on $\Lambda(0)$, Theorem~\ref{ThmAdm} implies that, contrary to spherical representations, the (left) annihilator ideals of admissible representations range over all prime ideals.
\end{remark}

We start the preparation for the proof. For $m\in\mZ_{>0}$, we denote by $S_\infty[m]<S_\infty$ the copy of $S_\infty$ that fixes every element of the set $\{1,2,\ldots, m\}$.

\begin{lemma}\label{Lem:LR}
For a partition $\nu\vdash j$ satisfying $\nu_{s+1}\le t$, for some $s,t\in\mN$, and partitions $\lambda\vdash r$ and $\kappa\vdash r+j$, the condition
$$[\Ind^{S_{r+j}}_{S_r\times S_j} D^\lambda\boxtimes D^\nu: D^\kappa]\;\not=\;0$$
implies that $\kappa_{s+i}\le t+\lambda_i $ for all $i>0$.
\end{lemma}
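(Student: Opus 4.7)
In characteristic zero, $D^\lambda$, $D^\nu$ and $D^\kappa$ are Specht modules, so the induction multiplicity equals the Littlewood--Richardson coefficient $c^\kappa_{\lambda,\nu}$, counted by semistandard skew tableaux of shape $\kappa/\lambda$ with content $\nu$ whose reverse reading word is a lattice word. My plan is to fix such a tableau $T$ and split it along the label threshold $s$. Throughout I will use the standard fact that the labels in row $r$ of any LR tableau are $\le r$; this is proved by induction, starting from the observation that the rightmost cell of the topmost non-empty row must carry the label $1$ by the lattice condition, together with the weak monotonicity of rows.

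The first step is to define $\mu$ as the partition obtained from $\lambda$ by adjoining the cells of $T$ carrying labels $\le s$. Column-strictness and the inheritance of the lattice condition by the sub-words of labels $\le s$ and labels $>s$ ensure that $\mu$ is indeed a partition, and that $T|_{\mu/\lambda}$ and $T|_{\kappa/\mu}$ (the latter with labels relabelled $s+j\mapsto j$) are themselves LR tableaux of contents $(\nu_1,\dots,\nu_s)$ and $(\nu_{s+1},\nu_{s+2},\dots)$ respectively. The row-label bound immediately gives $\mu_r=\kappa_r$ for all $r\le s$.

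The second step is the geometric observation $\mu_{s+i}\le\lambda_i$ for every $i\ge 1$. Indeed, for a cell $(s+i,c)\in\kappa/\lambda$ with $c>\lambda_i$ one has $\lambda^{t}_c<i$, so the column $c$ of $\kappa/\lambda$ contains cells at each of the rows $i,i+1,\dots,s+i$. Column-strictness then forces the label at $(s+i,c)$ to be at least $s+1$, so every cell of row $s+i$ with label $\le s$ must lie in a column $\le\lambda_i$.

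The third step is a general row-length bound: in any LR tableau of content $\pi$, each row of the skew shape has length $\le\pi_1$. I will prove this by fixing a row $r$, setting $a_u$ equal to the number of $u$'s in row $r$ and $A_u$ equal to the number of $u$'s in rows $1,\dots,r-1$, and inspecting the lattice condition at the instant reverse-reading has just processed all cells of row $r$ labelled $\ge u$; this yields $a_u\le A_{u-1}-A_u$ for $u\ge 2$, while the content gives $a_1\le\pi_1-A_1$. Summing over $u=1,\dots,r$ telescopes to row length $\le\pi_1-A_r=\pi_1$, using $A_r=0$ from the row-label bound. Applied to the LR sub-tableau on $\kappa/\mu$, this gives $\kappa_{s+i}-\mu_{s+i}\le\nu_{s+1}\le t$, and combined with $\mu_{s+i}\le\lambda_i$ we conclude $\kappa_{s+i}\le\lambda_i+t$. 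I expect the main technical effort to be verifying that the splitting of $T$ really does produce two genuine sub-LR-tableaux; once that is in place, the row-length bound and its application are short telescoping arguments.
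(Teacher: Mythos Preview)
Your proof is correct and follows the same approach as the paper, which simply records the lemma as ``immediate from the Littlewood--Richardson rule''; you have carefully supplied the details the paper omits. One minor shortcut: your row-length bound (each row of an LR tableau of content $\pi$ has length $\le \pi_1$) can alternatively be obtained from the conjugation symmetry $c^{\kappa}_{\mu,\pi}=c^{\kappa^t}_{\mu^t,\pi^t}$ together with the obvious column bound (content with at most $\ell$ parts forces each column of the skew shape to have at most $\ell$ cells), avoiding the telescoping computation.
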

\begin{proof}
Immediate from the Littlewood-Richardson rule.
\end{proof}
For the remainder of the section, fix a triple $(\omega, \Lambda,M)$ as in \ref{SecTrOl}.

\begin{lemma}\label{LemSub}
We have 
$$\LAnn V(\omega,\Lambda,M)\;\subset\;\LAnn T^\omega\;\supset\; \RAnn V(\omega,\Lambda,M).$$
\end{lemma}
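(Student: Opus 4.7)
The plan is to invoke the quasi-equivalence announced in~\S\ref{ClaimVN} and then locate a copy of $T^\omega$ inside the relevant induced module. Setting $W := \Ind^{S_\infty}_{S_r \times S_\infty[r]}(D^\lambda \boxtimes T^\omega)$ with $\lambda = \Lambda(0)$ and $r = |\lambda|$, the quasi-equivalence of $V(\omega,\Lambda,M)$ and $W$ as left $\mC S_\infty$-modules forces $\LAnn V(\omega,\Lambda,M) = \LAnn W$, so it suffices to prove $\LAnn W \subset \LAnn T^\omega$.

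For the core inclusion, I will use that the canonical map $D^\lambda \boxtimes T^\omega \hookrightarrow W$, $v \mapsto 1 \otimes v$, is an injective $\mC(S_r\times S_\infty[r])$-module homomorphism. Restricting source and target to $S_\infty[r]$, on which the $D^\lambda$-factor is trivial, exhibits copies of $T^\omega$ as a $\mC S_\infty[r]$-submodule of $\Res^{S_\infty}_{S_\infty[r]}W$. Consequently any $y' \in \mC S_\infty[r]$ that annihilates $W$ also annihilates $T^\omega$; since the $S_\infty[r]$-action on $T^\omega$ is merely the restriction to $\mC S_\infty[r]\subset\mC S_\infty$ of the left $S_\infty$-action, this precisely says $y' \in \LAnn T^\omega$, i.e.\ $\mC S_\infty[r]\cap \LAnn W\subset \LAnn T^\omega$.

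To upgrade this inclusion to all of $\mC S_\infty$, I will use that the shift embedding $\sigma_\ast\colon \mC S_\infty\hookrightarrow \mC S_\infty[r]$ coincides, on each subalgebra $\mC S_n$, with conjugation by a finitary permutation $\pi\in S_\infty$ satisfying $\pi(i)=i+r$ for $1\le i\le n$; such a $\pi$ exists for any $n,r\ge 0$. Because both $\LAnn W$ and $\LAnn T^\omega$ are two-sided ideals and hence stable under conjugation in $\mC S_\infty$, any $y\in\LAnn W\cap\mC S_n$ satisfies $\sigma_\ast(y)=\pi y\pi^{-1}\in\mC S_\infty[r]\cap\LAnn W\subset\LAnn T^\omega$, whence $y=\pi^{-1}\sigma_\ast(y)\pi\in\LAnn T^\omega$.

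The right-annihilator statement will follow by the same scheme, invoking the evident right-sided analogue of~\S\ref{ClaimVN} (with $M(0)$ in place of $\Lambda(0)$) and the identity $\RAnn T^\omega=\LAnn T^\omega$ established in Theorem~\ref{ThmSph}. The main technical nuance is the shift-as-conjugation step, but it reduces to the identity $\pi\sigma\pi^{-1}(i+r)=\pi(\sigma(i))=\sigma(i)+r=\sigma_\ast(\sigma)(i+r)$ for $\sigma\in S_n$, $i\le n$, together with the fact that $\pi\sigma\pi^{-1}$ fixes every integer outside $\{r+1,\ldots,r+n\}$; the rest of the argument is routine bookkeeping on the induced module.
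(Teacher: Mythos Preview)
Your argument reaches the conclusion, but by a route that is both different from and strictly weaker than the paper's. The paper does \emph{not} invoke the Vershik--Nessonov quasi-equivalence of~\S\ref{ClaimVN} here. Instead it appeals directly to Okounkov's construction of $V(\omega,\Lambda,M)$ in \cite[\S1.2]{Ok}, from which one sees that the restriction of $V(\omega,\Lambda,M)$ to $S_\infty[d]\times S_\infty[d]$ (with $d$ the depth) contains $T^\omega$ as a direct summand. The final conjugation step is then essentially the same as yours, and since the summand is one of $S_\infty[d]\times S_\infty[d]$-modules, both the left and right inclusions follow in one stroke.

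The distinction matters. First, the paper is explicit that~\S\ref{ClaimVN} is announced in \cite{VN} without published proof, and that only the special case $\sum_{i\le m}\alpha_i+\sum_{j\le n}\beta_j=1$ for finite $m,n$ is assumed, and only for Theorem~\ref{ThmAdm}. Your route would make the present lemma contingent on~\S\ref{ClaimVN} for \emph{every} $\omega\in\Omega$, a stronger unproved hypothesis than the paper ever commits to. Second, for the right-annihilator inclusion you invoke an ``evident right-sided analogue'' of~\S\ref{ClaimVN}; this analogue is not stated in \cite{VN} or in the paper, so you are importing a further unproved claim. The paper's direct argument avoids both issues and proves the lemma unconditionally---which is essential, since in the proof of Theorem~\ref{ThmAdm} the lemma is invoked precisely to dispose of the cases where~\S\ref{ClaimVN} is \emph{not} being assumed.
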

\begin{proof}
By construction of  $V(\omega,\Lambda,M)$, see for instance \cite[\S 1.2]{Ok}, its restriction to $S_\infty[d]\times S_\infty[d]$ has a direct summand isomorphic to $T^\omega$ (using the canonical identification $S_\infty[d]\simeq S_\infty$). Hence
$$\LAnn V(\omega,\Lambda,M)\cap \bk S_\infty[d]\;\subset\; I,$$
with $I$ the ideal $\LAnn T^\omega$ interpreted in $\bk S_\infty[d]$. The claim then follows by conjugation.
\end{proof}

\begin{proof}[Proof of Theorem~\ref{ThmAdm}]
By Lemma~\ref{LemSub} and Theorem~\ref{ThmSph}, we can focus on the final case. By \ref{ClaimVN}, the left annihilator of $ V(\omega,\Lambda,M)$ equals the annihilator of $\Ind^{S_\infty}_{S_r\times S_\infty[r]} (D^\lambda\boxtimes T^\omega)$.
By equation~\eqref{EqInd} and Theorem~\ref{ThmSph}, we then find
that 
$$\LAnn V(\omega,\Lambda,M)\;=\; \Ann \Ind^{S_\infty}_{S_r\times S_\infty[r]}(D^\lambda\boxtimes\varinjlim_i D^{\p(i)})$$
for any path $\p$ with $\p(\infty)=(\infty^m,n^\infty)$.
Moreover
$$\Ind^{S_\infty}_{S_r\times S_\infty[r]}(D^\lambda\boxtimes\varinjlim_i D^{\p(i)})\;\simeq\; \varinjlim_i \Ind^{S_{r+i}}_{S_r\times S_i}(D^\lambda\boxtimes D^{\p(i)}).$$
Hence, the inductive system corresponding to $\LAnn V(\omega,\Lambda,M)$ consists of all simple modules that appear in restrictions (to arbitrary $S_j<S_{r+m}$) of modules $\Ind^{S_{r+m}}_{S_r\times S_m}(D^\lambda\boxtimes D^{\mu})$ for $\mu\vdash m$ with $\mu_{m+1}\le n$. It follows quickly from Lemma~\ref{Lem:LR} that this system is precisely $\Phi_{(\infty^m,n+\lambda_1,n+\lambda_2,\ldots)}$, with notation as in Corollary~\ref{Cor:Lem}.
\end{proof}


\section{Positive characteristic}\label{SecTC}

In this section we consider an algebraically closed field $\bk$ of characteristic $p>0$. We investigate the annihilator map
$$\TAnn_{\cC}:\;M^{\oplus}(\cC)\;\to\;\TSpec\bk S_\infty$$
for tensor categories $\cC$ over $\bk$. We assume throughout that $\cC$ is a tensor category of moderate growth.

\subsection{Injectivity of the annihilator map}

\begin{prop}\label{PropInj}
\begin{enumerate}
\item If $\cC$ is Frobenius exact, then $\TAnn_{\cC}$ factors via $M^{\oplus}(\cC)\tto M(\cC)$.
\item If $\TAnn_{\cC}$ is injective, then for any non-split $\xi:\unit\to X$ in~$\cC$, there  is $n\in\mZ_{>0}$ for which $\xi^n:\unit\to\Sym^n X$ is zero.
\end{enumerate}
\end{prop}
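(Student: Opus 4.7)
For Part (1), the plan is to invoke the main theorem of \cite{CEO}: any Frobenius exact tensor category of moderate growth admits a tensor functor $F\colon\cC\to\Ver_p$. Because $\Ver_p$ is semisimple, its split Grothendieck monoid coincides with its Grothendieck monoid $M(\Ver_p)$, so $\TAnn_{\Ver_p}$ automatically factors through $M(\Ver_p)$. Combining this with the identity $\TAnn_\cC(X)=\TAnn_{\Ver_p}(FX)$ valid for any tensor functor (as remarked after the definition of $\TAnn$) and with the exactness of $F$ (so that $[FX]\in M(\Ver_p)$ depends only on $[X]\in M(\cC)$), we obtain the required factorisation $M^{\oplus}(\cC)\tto M(\cC)\to\TSpec\bk S_\infty$.

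For Part (2), I would argue by contradiction, assuming $\xi^n\neq 0$ for every $n\ge 1$. First, each $\xi^n$ is automatically monic, since $\End(\unit)=\bk$ makes any nonzero endomorphism of $\unit$ invertible. I claim moreover that each $\xi^n$ is a \emph{non-split} monomorphism: a retraction $r\colon\Sym^n X\to\unit$ of $\xi^n$ would, on setting $s:=r\circ q$ for $q\colon X^{\otimes n}\tto\Sym^n X$ the quotient, give $s\circ(\mathrm{id}_X\otimes \xi^{\otimes(n-1)})\colon X\to\unit$ as a retraction of $\xi$ (using $\xi^{\otimes n}=\xi\otimes \xi^{\otimes(n-1)}$), contradicting the hypothesis. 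Setting $Z_n:=\mathrm{coker}(\xi^n)$, we thus produce a genuinely non-split extension $0\to\unit\to\Sym^n X\to Z_n\to 0$ for every $n$, so that $[\Sym^n X]_\oplus\neq[\unit]_\oplus+[Z_n]_\oplus$ in $M^{\oplus}(\cC)$.

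To complete the contradiction with the injectivity of $\TAnn_\cC$, the remaining task is to exhibit some $n$ for which $\TAnn(\Sym^n X)=J_+\dagger\TAnn(Z_n)$, i.e., for which the T-annihilator of $\Sym^n X$ agrees with that of its formal ``split counterpart'' $\unit\oplus Z_n$. Using Proposition~\ref{LemDefT}(5), this reduces to checking equality of the $\bk S_i\otimes\bk S_{m-i}$-annihilators of the Young-restricted tensor powers for all $i$ and $m$. I expect this to be the main obstacle: absent a Frobenius-exact style input of the kind powering Part (1), there is no automatic mechanism for T-annihilators to be blind to non-split extension structure. My fallback plan would be to abandon $\Sym^n X$ itself and instead construct the conflicting pair directly from $X^{\otimes n}$, exploiting the persistent family of $\unit$-subobjects (guaranteed by the hypothesis $\xi^n\ne 0$ for all $n$) to play one decomposition off another within the lifted inductive system $\mathbf{B}_X$ attached to $\TAnn(X)$.
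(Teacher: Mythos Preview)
Your Part (1) is correct but uses a different route from the paper. You invoke the main theorem of \cite{CEO} to obtain an exact tensor functor $F\colon\cC\to\Ver_p$, then use semisimplicity of $\Ver_p$ and the invariance $\TAnn_\cC(X)=\TAnn_{\Ver_p}(FX)$. The paper instead cites the characterisation of Frobenius exactness in \cite[Theorem~C(iv)]{Tann}, which says that every short exact sequence in $\cC$ becomes split after applying some faithful symmetric monoidal functor; since $\TAnn$ is invariant under such functors, this immediately gives $\TAnn(X)=\TAnn(A\oplus B)$ for any extension $0\to A\to X\to B\to 0$. Your argument is perfectly valid; it just invokes a deeper theorem than necessary.

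Your Part (2) has a genuine gap, which you yourself flag. The detour through $\Sym^n X$ and $Z_n$ is unnecessary and is what leads you into trouble. The paper works directly with the original sequence $0\to\unit\xrightarrow{\xi}X\to Y\to 0$. The point you are missing is twofold. First, the inclusion $\TAnn(X)\subset\TAnn(\unit\oplus Y)$ \emph{always} holds: the filtration of $X^{\otimes n}$ induced by $\unit\subset X$ is $S_n$-stable with associated graded isomorphic (as an object with $S_n$-action) to $(\unit\oplus Y)^{\otimes n}$, so anything killing $X^{\otimes n}$ kills the associated graded. Injectivity of $\TAnn_\cC$ then forces this inclusion to be strict. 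Second, the passage from ``strict inclusion'' to ``$\xi^n=0$ for some $n$'' is precisely the content of \cite[Corollary~3.2.6]{Tann}, which the paper quotes as a black box. So the mechanism you say you lack --- forcing T-annihilators to detect the non-split extension --- is exactly what that external result supplies, applied to $X$ itself rather than to $\Sym^n X$.
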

\begin{proof}
Property (1) follows from the characterisation of Frobenius exactness in \cite[Theorem~C(iv)]{Tann}. Indeed, the latter states that any short exact sequence becomes split after applying a faithful symmetric monoidal functor. Alternatively, it follows from the main result of \cite{CEO}.

For part (2), consider the short exact sequence
$$0\to\unit\xrightarrow{\xi} X\to Y\to 0$$
defined by $\xi$. By assumption, the inclusion
$$\TAnn(X)\;\subset\; \TAnn(\unit\oplus Y)$$
is strict, and the conclusion follows from \cite[Corollary~3.2.6]{Tann}. 
\end{proof}

In characteristic zero, Theorem~\ref{ThmsVec}(3) shows that $\TAnn_{\cC}$ is injective if and only if $\cC$ is incompressible. By Proposition~\ref{PropInj}(2), a necessary condition for injectivity is a condition that is conjectured to be true for all known incompressible tensor categories ($\Ver_{p^\infty}$ and its tensor subcategories), and verified for $\Ver_{2^\infty}$ in \cite[Theorem~9.2.1]{Incomp}. This leads us to the following conjecture, in analogy with Theorem~\ref{ThmsVec}(3) and (4).
\begin{conjecture}\label{ConjVer}
The annihilator map
$$\TAnn_{\cC}:\;M^{\oplus}(\cC)\to\TSpec\bk S_\infty$$
is injective if and only if $\cC$ is incompressible, and a semiring isomorphism if and only if $\cC=\Ver_{p^\infty}$.  Moreover,
$$\TSys:\;M(\Ver_{p^\infty})\to\TSys\bk S_\infty$$
is a semiring isomorphism.
\end{conjecture}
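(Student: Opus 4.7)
The plan is to split the biconditional into four implications, ordered by increasing difficulty, and to translate throughout between $\TAnn$ and inductive systems via Theorem \ref{ThmZal} and the identification $\Phi(\TAnn(X))=K_0(\mathbf{B}_X)$. The first implication, \emph{incompressible $\Rightarrow$ injective}, I would attack by reducing (conditional on the BEO conjecture) to the case $\cC\subseteq\Ver_{p^\infty}$, and then giving an explicit combinatorial description of the lifted inductive systems $\mathbf{B}_X$ for indecomposable $X\in\Ver_{p^n}$, from which injectivity on isomorphism classes should follow. The $\Sym^n$-vanishing criterion in Proposition \ref{PropInj}(2) serves as a guide, and extending its verification for $\Ver_{2^\infty}$ to all primes is a key intermediate subgoal.

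The second implication, \emph{injective $\Rightarrow$ incompressible}, I would handle contrapositively. If $\cC$ is not incompressible, then there is a tensor functor $F:\cC\to\cD$ that fails to be an equivalence onto a tensor subcategory, and since tensor functors are automatically faithful, $F$ must fail to be full. The idea is then to produce a non-split short exact sequence $0\to A\to B\to C\to 0$ in $\cC$ whose image under $F$ splits; granting this, $B$ is indecomposable with $B\not\simeq A\oplus C$, yet
\[
\TAnn_{\cC}(B)=\TAnn_{\cD}(FB)=\TAnn_{\cD}(FA\oplus FC)=\TAnn_{\cC}(A\oplus C),
\]
contradicting injectivity. The work in this step is to argue that the failure of fullness always yields such a splitting sequence, plausibly by converting a missing morphism in $\Hom_{\cD}(FA,FB)$ into a non-split extension class that becomes trivial after applying $F$.

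For the semiring isomorphism assertion, the nontrivial direction is surjectivity of $\TAnn_{\Ver_{p^\infty}}$. Via Theorem \ref{ThmZal}, this amounts to realising every T-indecomposable inductive system $\Phi$ as $K_0(\mathbf{B}_X)$ for some $X\in\Ver_{p^\infty}$. My strategy would be to construct, from $\Phi$, a universal tensor category $\cC_\Phi$ carrying an object whose lifted inductive system is $\Phi$ (in the spirit of Deligne's universal constructions), establish its moderate growth, and then invoke the BEO conjecture to transfer the realisation into $\Ver_{p^\infty}$. Combined with injectivity of $\TAnn_{\Ver_{p^\infty}}$ from the first step, and the fact that $M^{\oplus}(\Ver_{p^\infty})$ is freely generated by indecomposables so that Proposition \ref{Lem:Tens} determines the full semiring structure, this gives the isomorphism. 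The converse, that $\TAnn_{\cC}$ an isomorphism forces $\cC=\Ver_{p^\infty}$, then follows: injectivity puts $\cC$ inside $\Ver_{p^\infty}$ (again via BEO), while surjectivity together with injectivity of $\TAnn_{\Ver_{p^\infty}}$ forces the inclusion to be an equality.

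The main obstacle is surjectivity in the third step: producing an object of $\Ver_{p^\infty}$ with prescribed T-prime annihilator is essentially as strong as the BEO conjecture itself, so one should not expect the present conjecture to be settled independently of BEO. A secondary obstacle is the prime-by-prime extension of the $\Sym^n$-vanishing criterion from $p=2$ in Step 1. The most accessible part is Step 2, where the argument is essentially extension-theoretic and does not appeal to any deep structural conjecture.
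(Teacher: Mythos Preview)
The statement is a \emph{Conjecture}, not a theorem; the paper offers no proof and explicitly calls it ``currently out of reach,'' establishing only the partial results $\cC=\Ver_p$ and $\cC=\Ver_4$ by direct computation with the invariants $\sss$ and $\aaa$ (Theorem~\ref{ThmVer}). There is thus no paper proof to compare against, and your proposal should be read as a strategy toward an open problem rather than a proof.

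That said, the step you flag as ``most accessible''---Step~2---has a concrete gap. You assert that a faithful non-full tensor functor $F:\cC\to\cD$ always yields a non-split short exact sequence in $\cC$ that splits under $F$. This fails whenever $\cC$ is semisimple: take $\cC=\mathrm{Rep}\,G$ for a finite group $G$ of order prime to $p$ and $F$ the fibre functor to $\Vecc$; then $F$ is faithful and not full, but $\cC$ contains no non-split extensions whatsoever. The conjecture still predicts that $\TAnn_{\cC}$ is non-injective here (any two representations of the same dimension have the same $\TAnn$, e.g.\ the two one-dimensional representations of $C_2$ for $p$ odd), but your extension-splitting mechanism cannot detect this. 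The characteristic-zero argument in the paper (Theorem~\ref{ThmsVec}(3)) proceeds instead by showing that $\TAnn$ factors through the underlying (super) vector space; the positive-characteristic analogue, even restricted to the Frobenius-exact case, is itself posed as a separate open conjecture immediately afterwards. Your Steps~1 and~3 are, as you yourself acknowledge, conditional on the BEO conjecture and on an explicit combinatorial understanding of the lifted inductive systems in $\Ver_{p^n}$ that the paper does not supply; note also the remark following the conjecture, which shows that the full statement \emph{implies} BEO, so a proof conditional on BEO would at best establish their equivalence.
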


\begin{remark}
\begin{enumerate}
\item A consequence of the conjecture would be that classifying (T-)prime ideals in $\bk S_\infty$ is a wild problem, while classifying T-indecomposable inductive systems is not.\color{black}
\item Conjecture~\ref{ConjVer} is a stronger version of \cite[Conjecture~1.4]{BEO}. Indeed, assume that Conjecture~\ref{ConjVer} is true and let $\cC$ be an arbitrary incompressible tensor category. By \cite[Corollary~5.2.8]{Incomp}, there exists an incompressible tensor category $\cD$ containing both $\Ver_{p^\infty}$ and $\cC$ as tensor subcategories. Then $\TAnn_{\Ver_{p^\infty}}$ factors via $\TAnn_{\cD}$. However, under Conjecture~\ref{ConjVer}, the former is an isomorphism, while the latter is injective, implying that $M^{\oplus}(\Ver_{p^\infty})\hookrightarrow M^{\oplus}(\cD)$ is an isomorphism, forcing $\cD=\Ver_{p^\infty}$ and hence $\cC\subset \Ver_{p^\infty}$. The conclusion follows from \cite[Theorem~A]{Incomp}.
\end{enumerate} 
\end{remark}

While Conjecture~\ref{ConjVer} seems currently out of reach, the following special case is more tractable. It is an analogue of Theorem~\ref{ThmsVec}(3), but the proof of the latter fails for $p>3$. By \cite{CEO} it suffices to show that for a semisimple tensor category $\cC$ of moderate growth, the unique tensor functor $\cC\to\Ver_p$ is the inclusion of a subcategory if $K_0(\cC)\to K_0(\Ver_p)$ is injective.
\begin{conjecture}\label{ConjVer2}
Let $\cC$ be Frobenius exact.
If the annihilator map
$$\TAnn_{\cC}:\;M^{\oplus}(\cC)\to\TSpec\bk S_\infty$$
is injective, then $\cC$ is a tensor subcategory of $\Ver_p$ (and thus incompressible).
\end{conjecture}

\subsection{Symmetrisers in ideals}
To establish the first steps towards Conjecture~\ref{ConjVer}, we introduce invariant of ideals, complementary to the ones in Subsection~\ref{SecGrowth}.

\subsubsection{}\label{as} Denote the symmetriser and skew symmetriser in $\bk S_n$ by $s_n$ and $a_n$. Let $I<\bk S_\infty$ be a non-zero ideal. It follows from \cite[Proposition~A.1]{CEO}, see also the proof of \cite[Lemma~4.10]{CEO}, that $s_n\in I$ for some $n$. We can thus define
$$\sss(I)\;:=\;\max\{n\in\mZ_{>0}\mid s_n\not\in I\}\;\in\;\mN$$
and similarly
$$\aaa(I)\;:=\;\max\{n\in\mZ_{>0}\mid a_n\not\in I\}\;\in\;\mN.$$
By convention, $\sss(\bk S_\infty)=0= \aaa(\bk S_\infty).$

\begin{example}
For the augmentation ideal $J_+$ we have
$$\aaa(J_+)=1\quad\mbox{and}\quad\sss(J_+)=p-1.$$
\end{example}

\begin{lemma}
For non-zero ideals $I,J$ in $\bk S_\infty$, we have
$$\sss(I\dagger J)\;=\; \sss(I)+\sss(J)\quad\mbox{and}\quad \aaa(I\dagger J)\;=\;\aaa(I)+\aaa(J).$$
\end{lemma}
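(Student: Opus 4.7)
The strategy is to prove the $\sss$-equality by two inequalities; the $\aaa$-equality follows by the signed analogue. Set $m = \sss(I)$, $n = \sss(J)$, and write $R$ for a set of coset representatives of $S_{m+n}/(S_m \times S_n)$ in $S_{m+n}$. The partition $S_{m+n} = \bigsqcup_{g \in R} g(S_m \times S_n)$ yields the key identity
\begin{equation*}
s_{m+n} \;=\; \Bigl(\sum_{g \in R} g\Bigr) \cdot (s_m \bullet s_n),
\end{equation*}
and an analogous identity with sign twists for $a_{m+n}$. For the lower bound $\sss(I \dagger J) \ge m + n$, I would take $i = m$ in the intersection defining $(I \dagger J)_{m+n}$ and act on $1 \otimes (\bar{1} \otimes \bar{1}) \in N := \Ind^{S_{m+n}}_{S_m \times S_n}\bigl((\bk S_m/I_m) \boxtimes (\bk S_n/J_n)\bigr)$: using the key identity,
\begin{equation*}
s_{m+n} \cdot \bigl(1 \otimes (\bar{1} \otimes \bar{1})\bigr) \;=\; \sum_{g \in R} g \otimes (\bar{s}_m \otimes \bar{s}_n).
\end{equation*}
By definition of $m$ and $n$, the classes $\bar{s}_m$ and $\bar{s}_n$ are nonzero, so $\bar{s}_m \otimes \bar{s}_n \neq 0$; the distinct coset components of $N$ are linearly independent over $\bk$, so the sum is nonzero, whence $s_{m+n}$ does not annihilate $N$.

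For the upper bound $\sss(I \dagger J) \le m + n$ I show that $s_{m+n+1}$ annihilates every induced module in the intersection defining $(I \dagger J)_{m+n+1}$. For each $i \in \{0, \ldots, m+n+1\}$, at least one of $i > m$ and $m+n+1-i > n$ holds. In the first case $s_i \in I$, which we may use after observing that $s_k \in I \Rightarrow s_{k+1} \in I$ from the factorisation $s_{k+1} = \bigl(\sum_{j=1}^{k+1} (j,k{+}1)\bigr) s_k$. Applying the key identity for this $i$,
\begin{equation*}
s_{m+n+1} \cdot \bigl(1 \otimes (v_1 \otimes v_2)\bigr) \;=\; \Bigl(\sum_{h} h\Bigr) \cdot \bigl(1 \otimes (s_i v_1 \otimes s_{m+n+1-i} v_2)\bigr) \;=\; 0,
\end{equation*}
since one of the two factors $s_i v_1$, $s_{m+n+1-i} v_2$ vanishes. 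Centrality of $s_{m+n+1}$ in $\bk S_{m+n+1}$ then propagates the vanishing from $1 \otimes M$ to the whole induced module.

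The $\aaa$-case is formally identical with $s$ replaced by $a$: the key identity becomes $a_{m+n} = \bigl(\sum_{g \in R} \mathrm{sign}(g) g\bigr)(a_m \bullet a_n)$, and centrality of $s_k$ is replaced by the relation $a_k g = \mathrm{sign}(g) a_k$, which yields $a_{m+n+1} \cdot (g \otimes v) = \mathrm{sign}(g)\, a_{m+n+1} \cdot (1 \otimes v)$ and lets the upper-bound step go through unchanged; the lower-bound calculation is exactly the same because the coefficients $\mathrm{sign}(g)$ only sign the linearly independent summands and cannot produce cancellation. The only real obstacle is notational bookkeeping --- correctly pushing elements of $\bk S_{m+n}$ through the induction tensor, handling the boundary cases $i \in \{0, m+n+1\}$ via the monotonicity of $\{s_k \in I\}$ in $k$, and separating the trivial case where $I$ or $J$ equals $\bk S_\infty$ (covered by Example~\ref{ExDagger}(1)); with these in place both inequalities come down to the single coset-decomposition computation above.
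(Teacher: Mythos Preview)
Your proof is correct and follows essentially the same approach as the paper: both use the coset factorisation $s_{m+n}=\bigl(\sum_{g\in R}g\bigr)(s_m\bullet s_n)$, obtain the lower bound by showing $s_{m+n}$ acts nontrivially on the generator $1\otimes(\bar1\otimes\bar1)$ of the induced module at $i=m$, and obtain the upper bound by combining the pigeonhole observation that $i\ge m+1$ or $m+n+1-i\ge n+1$ with centrality of $s_{m+n+1}$. One small simplification: the skew symmetriser $a_k$ is also central in $\bk S_k$ (since $\sigma a_k=\mathrm{sign}(\sigma)a_k=a_k\sigma$), so there is no need to replace the centrality argument in the $\aaa$-case.
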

\begin{proof}
We write the proof for symmetrisers. We can observe that, for $r,t\in\mZ_{>0}$, we have $s_{r+t}=s'(s_r\bullet s_t)$, for $s'$ a sum of shortest coset representatives of $S_{r+t}/(S_r\times S_t)$. Hence, if $s_m\not\in I$ and $s_n\not\in J$, it follows that $s_{m+n}$ does not act trivially on the generator $1\otimes 1\otimes 1$ of $\Ind^{S_{m+n}}_{S_m\times S_n}(\bk S_m/I_m\boxtimes \bk S_n/J_n)$. Hence we find
$$\sss(I\dagger J)\;\ge\; \sss(I)+\sss(J).$$
To get the inequality in the other direction, we can use additionally that the symmetriser is in the centre of the group algebra. We can observe that $s_{m+1}\in I$ and $s_{n+1}\in J$ imply that $s_{m+n+1}$ acts trivially on 
$$\Ind^{S_{m+n+1}}_{S_i\times S_j}(\bk S_i/I_i\boxtimes \bk S_j/J_j)$$ for $i+j=m+n+1$, since either $i\ge m+1$ or $j\ge n+1$.
\end{proof}

\begin{lemma}\label{lemfin}
For any $I\in \TSpec\bk S_\infty$, we have $\gggr(I)\in\mR_{\ge 0}$.
\end{lemma}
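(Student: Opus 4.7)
The plan is to bound $g_n(I) := \dim_\bk(\bk S_n/I_n)$ exponentially in $n$, which suffices since $\gggr(I) = \lim_n g_n(I)^{1/(2n)}$. By the discussion in \ref{as}, we have $\aaa(I) < \infty$; set $\ell := \aaa(I) + 1$, so that $a_\ell \in I$.

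First I would observe that, since $I$ is a two-sided ideal, the two-sided ideal $\langle a_\ell\rangle_n \subset \bk S_n$ generated by $a_\ell \in \bk S_\ell \subset \bk S_n$ is contained in $I_n$ (for $n \ge \ell$); this gives $g_n(I) \le \dim_\bk \bk S_n/\langle a_\ell\rangle_n$. To bound the right-hand side, I would consider the vector space $V := \bk^{\ell-1}$ and the natural permutation action $\rho\colon \bk S_n \to \End_\bk(V^{\otimes n})$. Since $\wedge^\ell V = 0$, we have $\rho(a_\ell) = 0$ and hence $\langle a_\ell\rangle_n \subset \ker\rho$. The key input is then the second fundamental theorem of invariant theory for $GL(V)$---valid in arbitrary characteristic via Schur--Weyl duality and the theory of Schur algebras (cf.\ Carter--Lusztig, Donkin)---which asserts $\langle a_\ell\rangle_n = \ker\rho$. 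Consequently $\bk S_n/\langle a_\ell\rangle_n$ embeds into $\End_\bk(V^{\otimes n})$, whose dimension is $(\ell-1)^{2n}$, so
\[
g_n(I) \le (\ell-1)^{2n} \qquad \text{and}\qquad \gggr(I) \le \ell - 1 = \aaa(I) < \infty.
\]

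The main obstacle I anticipate is the invocation of the second fundamental theorem in positive characteristic; while classical in characteristic zero, its formulation in characteristic $p$ (especially for small primes) warrants a careful reference. As a fallback, one could bypass this theorem by a direct representation-theoretic argument that bounds $\dim \bk S_n/\langle a_\ell\rangle_n$ by controlling the partitions $\lambda$ whose associated simple $D^\lambda$ can be a composition factor, using the constraints imposed by the vanishing of $a_\ell$.
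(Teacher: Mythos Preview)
Your proof is correct and follows essentially the same route as the paper: locate an (anti\nobreakdash-)symmetriser in $I$, invoke the second fundamental theorem of invariant theory so that $\TAnn(V)\subset I$ for a suitable finite-dimensional $V$, and then bound $\gggr(I)$ by the dimension of $V$. The paper uses the symmetriser $s_n$ in place of your $a_\ell$ and phrases the final step as $\gggr(I)\le \gggr(\TAnn(V))=n$ via Lemma~\ref{Lemmag}(1), whereas you unpack that bound explicitly through the embedding into $\End_\bk(V^{\otimes n})$; your flagging of the positive-characteristic second fundamental theorem as the only nontrivial input is exactly what the paper's phrase ``by classical invariant theory'' is pointing to.
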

\begin{proof}
As recalled above, we have $s_{n+1}\in I$, for some $n$. Hence $ \TAnn(V)\subset I$, for a vector space $V$ of dimension $n$, by classical invariant theory. Thus
$$\gggr(I)\;\le\; \gggr(\TAnn(V))\;=\, n,$$
see Lemma~\ref{Lemmag}(1).
\end{proof}

\subsection{The annihilator map for Verlinde categories}
Conjecture~\ref{ConjVer} predicts that $\TAnn_{\Ver_{p^n}}$ is injective. Here we verify some small cases.
\begin{theorem}\label{ThmVer}
\begin{enumerate}
\item The annihilator map $\TAnn_{\Ver_4}$
is injective.
\item For every prime $p$, the annihilator map
$\TAnn_{\Ver_p}$
is injective.
\end{enumerate}
\end{theorem}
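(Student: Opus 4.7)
The plan hinges on extracting enough $\dagger$-additive invariants from $\TAnn$, together with the $\Join$-compatibility of the semiring morphism, to recover the multiplicity vector of any $X = \bigoplus_i n_i L_i$. Since $\Ver_p$ (and $\Ver_4$) is semisimple, $M^{\oplus}$ coincides with the Grothendieck semiring, freely generated (as an $\mN$-module) by the isomorphism classes of the simples $L_0 = \unit, L_1,\ldots, L_{N-1}$ (with $N=2$ for $\Ver_4$, $N=p-1$ for $\Ver_p$). Writing $M_i := \TAnn(L_i)$, the map becomes $(n_i)\mapsto \dagger_i M_i^{\dagger n_i}$, and the $M_i$ are the distinct maximal ideals of $\bk S_\infty$ (by Benson--Kessar for $p>2$, and its $p=2$ extension for $\Ver_4$).

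For part~(2), I would first compute $\aaa(M_i) = i+1$. Realizing $L_i$ as the simple tilting $SL_2$-module of dimension $i+1$, we have $\wedge^{i+1} L_i = \det L_i = \unit \neq 0$ while $\wedge^{i+2} L_i = 0$ for dimension reasons. By additivity of $\aaa$ under $\dagger$, this yields the single linear equation $\aaa(\TAnn X) = \sum_i n_i(i+1)$, insufficient on its own. To obtain further equations, I would exploit the semiring structure: since $\TAnn(X\otimes L_j) = \TAnn(X)\Join M_j$, applying $\sss$ (also additive under $\dagger$) in conjunction with the Verlinde fusion rules $L_i\otimes L_j = \bigoplus_k N_{ij}^k L_k$ gives
\[
\sss\bigl(\TAnn(X)\Join M_j\bigr) \;=\; \sum_k \Bigl(\sum_i n_i\, N_{ij}^k\Bigr)\sss(M_k), \qquad j = 0,\ldots,p-2.
\]
I would then prove invertibility of the coefficient matrix $A_{ji} := \sum_k N_{ij}^k\,\sss(M_k)$; this should reduce to invertibility of the Verlinde fusion matrices (a standard property of fusion rings with a faithful character) combined with non-degeneracy of the vector $(\sss(M_k))_k$, which itself requires computing $\sss(M_k)$ explicitly.

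Part~(1) is structurally easier (only two simples) but harder in that $\aaa = \sss$ in characteristic $2$ (since $a_n = s_n$), so only one primitive $\dagger$-additive invariant is readily available. The pair of invariants $\sss(I)$ and $\sss(I\Join M_1)$ nonetheless gives a linear system in $(m_0, m_1)$ whose $2\times 2$ coefficient matrix depends on $\sss(M_1)$ and on the fusion rule for $T\otimes T$ in $\Ver_4$; a direct computation should show this matrix is non-singular for every value of $\sss(M_1)$, yielding the injectivity.

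The main obstacle I anticipate is Step~3 of part~(2): invertibility of $A$ requires explicit knowledge of $\sss(M_k)$, equivalently, of when $\Sym^n L_k$ vanishes in $\Ver_p$ — a delicate plethysm question in characteristic $p$. If this direct matrix analysis proves intractable, a natural alternative is to strengthen Remark~\ref{ggd} to the equality $\gggr(\TAnn(X)) = \mathsf{gd}(X)$ in the Frobenius-exact setting (via a Schur--Weyl-type duality for $\Ver_p$), and then separate the $n_i$ using $\gggr$-invariants of suitable tensor twists of $X$.
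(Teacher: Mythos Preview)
Your treatment of part~(1) rests on a structural misconception: $\Ver_4$ is \emph{not} semisimple. It has two simple objects ($\unit$ and a simple projective $V$) but three indecomposables, the third being the non-simple projective cover $P$ of $\unit$. Consequently $M^{\oplus}(\Ver_4)$ is free of rank~$3$ over~$\mN$, not rank~$2$, and $\TAnn(P)$ is a T-prime ideal that is \emph{not} maximal. Your proposed $2\times 2$ system in $(m_0,m_1)$ therefore cannot separate, say, $\unit^2$ from $P$ (or $\unit\oplus P$ from $V^2$, since $V\otimes V\simeq P$ and $V\otimes P\simeq V^2$). The paper handles this by using three invariants $\sss(\TAnn(X))$, $\sss(\TAnn(V\otimes X))$, $\sss(\TAnn(V\otimes V\otimes X))$ and checking that the resulting $3\times 3$ matrix on $(\unit,V,P)$ has nonzero determinant.

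For part~(2) your overall strategy---tensor by simples and evaluate a $\dagger$-additive invariant to obtain a linear system in the multiplicities---is exactly the paper's strategy, but your implementation choices make the key step (your acknowledged ``main obstacle'') harder than necessary. The paper uses $\aaa$ rather than $\sss$, and tensors by powers of the single object $L_2$ rather than by each $L_j$. The payoff is that, realising $\Ver_p$ as the semisimplification of $\mathrm{Rep}\,C_p$, one has the clean formula $\aaa(\TAnn(\overline W))=\dim W-\dim(\text{maximal projective summand of }W)$, which makes every matrix entry $x_i(L_j)=\aaa(\TAnn(L_2^{\otimes i-1}\otimes L_j))$ explicitly computable (namely $2^{i-1}j$ if $i+j\le p$, and $2^{i-1}j-p$ if $i+j=p+1$). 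The determinant then follows by a single row-reduction. By contrast, your matrix $A_{ji}=\sum_k N_{ij}^k\,\sss(M_k)$ requires the values $\sss(M_k)$, i.e.\ the vanishing threshold of $\Sym^n L_k$ in $\Ver_p$, which is genuinely delicate; and even granting those, invertibility of $A$ does not reduce to invertibility of the individual fusion matrices $N_j$ in any obvious way.
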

\begin{proof}
First we prove part (1). There are three indecomposable objects in $\Ver_4$, see \cite{BE}, the unit $\unit$, a simple projective $V$ and the projective cover $P$ of $\unit$. We will use
\begin{equation}\label{eq:Ver4}V^{\otimes 2}\simeq P\quad\mbox{and}\quad V\otimes P\simeq V^2.\end{equation} We associate three invariants to the annihilator of an object $X$ in $\Ver_4$. With $K:=\TAnn(V)$, set $x_1(X):=\sss(\TAnn(X))$ and
$$x_2(X):=\sss(K\Join\TAnn(X))\quad\mbox{and}\quad x_3(X):=\sss(K\Join K\Join \TAnn(X)).$$ 
In practice, we can use $K\Join\TAnn(X)=\TAnn(V\otimes X)$.

If
$X$ is $\unit^l\oplus V^m\oplus P^n,$
then by \cite[Example~4.6]{CEO} and \eqref{eq:Ver4}
$$x_1(X)= l+2m+4n,\quad x_2(X)=2l+4m+4n,\quad x_3(X)=4l+4m+8n.$$
It now follows from the observation
$$\det\left(\begin{array}{ccc}
1&2&4\\
2&4&4\\
4&4&8
\end{array}\right)\;=\;-16\;\not=\; 0$$
that $\TAnn_{\Ver_4}$ is injective.

For part (2), we consider $\Ver_p$ as the semisimplification of $\mathrm{Rep} C_p$, so that the simple $L_i\in \Ver_p$, for $0<i<p$, is the image of the indecomposable $C_p$-representation of dimension $i$, see \cite{Os}. As in part (1), for $X\in\Ver_p$ we define $x_i(X):=\aaa(\TAnn(L_2^{\otimes i-1}\otimes X))$, for $i\in\mZ_{>0}$, which again only depend on $\TAnn(X)$. For $W\in\mathrm{Rep} C_p$, with image $\overline{W}\in\Ver_p$, the value $\aaa(\TAnn(\overline{W}))$ is the (vector space) dimension of $W$ minus the dimension of the maximal projective summand.
For $i+j\le p$, we thus have
$x_i(L_j)=2^{i-1}j$ and for $i+j=p+1$, we have
$x_i(L_j)=2^{i-1}j-p$. Again it suffices to show that the determinant of the $(p-1)\times(p-1)$-matrix $(x_i(L_j))_{1\le i,j\le p-1}$ is non-zero. After subtracting the first row $2^{i-1}$ times from the $i$-th row, we obtain a matrix $A$ with 
$A_{ij}=0$ if $i+j<p$ and $i>1$, and $A_{ij}=-p$ if $i+j=p+1$, hence
$$\det((x_i(L_j))_{ij})\;=\; \det(A)\;=\; (-1)^{\frac{(p+1)(p-2)}{2}}p^{p-2}\;\not=\; 0,$$
as desired.
\end{proof}

\begin{remark}
The method in the proof of Theorem~\ref{ThmVer} would extend to the essential image of the defining monoidal functor
$$F:\mathrm{Tilt} SL_2\to\Ver_{p^n},$$
see \cite{BEO, AbEnv}, if it remains true that 
$$\aaa(\TAnn_{\Ver_{p^n}}(FT_i))=\dim_{\bk}T_i,\quad\mbox{ for }\quad0\le i\le p^n-2,$$ where $T_i$ is the $SL_2$-tilting module of highest weight $i$. The cases dealt with in Theorem~\ref{ThmVer} are precisely those for which $F$ is essentially surjective.
\end{remark}

We conclude with an observation regarding the growth rate of ideals corresponding to $\Ver_p$.

\begin{prop}\label{Propgdg}Let $p$ be an odd prime and let $J_1,J_2,\ldots, J_{p-1}$ be the exhaustive list of maximal ideals in $\bk S_\infty$ from \cite{BK}. Then
$$\gggr(J_j)\;=\; \sin\left(\frac{\pi j }{p}\right)/\sin \left(\frac{\pi}{p}\right),\quad\mbox{for}\quad 1\le j<p.$$
Equivalently, for any simple object $V$ in $\Ver_p$, we have
$$\gggr(\TAnn(V))\;=\;\mathtt{gd}(V).$$
\end{prop}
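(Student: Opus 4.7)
The plan is to first reduce the proposition to a statement in $\Ver_p$ and then to run an argument parallel to the proof of Lemma~\ref{Lemmag}(2). By \cite{BK} together with its categorical reinterpretation in \cite{PolyFun}, the maximal ideals $J_1,\ldots,J_{p-1}$ coincide with $\TAnn_{\Ver_p}(L_1),\ldots,\TAnn_{\Ver_p}(L_{p-1})$, where $L_1=\unit,L_2,\ldots,L_{p-1}$ are the simple objects of $\Ver_p$, while $\mathtt{gd}(L_j)=\sin(\pi j/p)/\sin(\pi/p)$ is the standard Frobenius--Perron (equivalently, $SU(2)_{p-2}$-type quantum) dimension. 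Hence the two displayed equalities are equivalent, and it suffices to prove $\gggr(\TAnn(V))=\mathtt{gd}(V)$ for each simple $V=L_j\in\Ver_p$. The upper bound $\gggr(\TAnn(V))\le\mathtt{gd}(V)$ is immediate from Remark~\ref{ggd}.

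For the matching lower bound I will decompose $V^{\otimes n}=\bigoplus_{k=1}^{p-1} m_k^{(n)}L_k$ in $\Ver_p$ and set $\ell_n:=\ell(V^{\otimes n})=\sum_k m_k^{(n)}$. By Frobenius--Perron theory (see for instance \cite[Proposition~3.1]{COT}) one has $\sqrt[n]{\ell_n}\to\mathtt{gd}(V)$. The crucial feature of $\Ver_p$ is that this sum has at most $p-1$ nonzero terms, a bound independent of $n$; Cauchy--Schwarz therefore yields
$$\dim\End_{\Ver_p}(V^{\otimes n})=\sum_k (m_k^{(n)})^2\ \ge\ \frac{\ell_n^2}{p-1},$$
from which $\sqrt[2n]{\dim\End_{\Ver_p}(V^{\otimes n})}\to\mathtt{gd}(V)$.

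To conclude, set $A_n:=\bk S_n/\TAnn(V)_n$ and $E_n:=\End_{\Ver_p}(V^{\otimes n})$, so that $g_n(\TAnn(V))=\dim A_n$ and $A_n$ is a subalgebra of the semisimple algebra $E_n$. The double commutant theorem, together with Cauchy--Schwarz applied to the common isotypic decomposition of $E_n$ with respect to $A_n$, gives $\dim A_n\cdot\dim A_n'\ge\dim E_n$, where $A_n'\subset E_n$ is the commutant of $A_n$, consisting of the $S_n$-invariant $\Ver_p$-endomorphisms of $V^{\otimes n}$. A polynomial-in-$n$ upper bound on $\dim A_n'$ would combine with the previous estimate to force $\sqrt[2n]{\dim A_n}\to\mathtt{gd}(V)$, matching the upper bound. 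This polynomial bound on $\dim A_n'$ is the main obstacle: it amounts to a Schur--Weyl-type statement for $\Ver_p$. The two natural routes are (a) to exploit the essentially surjective functor $F:\mathrm{Tilt}\,SL_2\to\Ver_p$ highlighted in the remark following Theorem~\ref{ThmVer} and to transfer the classical Schur--Weyl estimates for $SL_2$-tilting modules used in the proof of Lemma~\ref{Lemmag}(1), or (b) to analyse the multiplicity spaces of the $L_k$ in $V^{\otimes n}$ as $\bk S_n$-modules directly and bound $\dim\End_{\bk S_n}$ of each summand.
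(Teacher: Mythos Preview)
Your argument is incomplete, and you say so yourself: the polynomial bound on $\dim A_n'$ is left as an ``obstacle'' with two suggested routes, neither carried out. The paper's proof sidesteps this problem entirely. Rather than staying in $\Ver_p$, where the braiding map $\bk S_n\to\End_{\Ver_p}(V^{\otimes n})$ need not be surjective, it invokes \cite[Lemma~8.5]{CEO} to pass to the fusion category $\Ver_p(SL_j)$, in which (as established in the proof of \cite[Theorem~4.1.1]{PolyFun}) the braiding map $\bk S_n\to\End(L_j^{\otimes n})$ \emph{is} surjective. Then $A_n=E_n$ outright, your commutant $A_n'$ reduces to the centre of $E_n$ of dimension at most the number $d$ of simples, and your Cauchy--Schwarz step becomes the paper's single inequality $\ell(L_j^{\otimes n})^2\le d\,\dim(\bk S_n/\TAnn(L_j)_n)$. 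So your route~(a) is in the right spirit but targets the wrong functor: the relevant passage is to $\Ver_p(SL_j)$, where surjectivity of Schur--Weyl survives semisimplification, not along $\mathrm{Tilt}\,SL_2\to\Ver_p$.

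There is also a technical hazard in your double-commutant step. The inequality $\dim A_n\cdot\dim A_n'\ge\dim E_n$ you invoke via Cauchy--Schwarz is the standard one for a \emph{semisimple} subalgebra of a semisimple algebra, but $A_n=\bk S_n/\TAnn(V)_n$ need not be semisimple in characteristic $p$. For non-semisimple subalgebras the inequality can fail (e.g.\ upper-triangular matrices inside $M_n(\bk)$ have commutant the scalars, and $n(n+1)/2<n^2$ for $n\ge 2$). So even granting a polynomial bound on $\dim A_n'$, your chain of inequalities would need repair. The paper's surjectivity trick avoids this issue as well.
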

\begin{proof}
Firstly, by \cite[Lemma~8.3]{CEO}, the growth dimension of $V$ equals its Frobenius-Perron dimension, so $\mathtt{gd}(L_j)=\sin\left(\frac{\pi j }{p}\right)/\sin \left(\frac{\pi}{p}\right)$, see for instance \cite{BEO}, and it suffices to prove the second claim. 
For that claim, we already know that $\gggr(\TAnn(V)) \le \mathtt{gd}(V)$, by Remark~\ref{ggd}.

By \cite[Lemma~8.5]{CEO}, it is sufficient to prove $\gggr(\TAnn(L_j))\ge\mathtt{gd}(L_j)$ for $L_j$ regarded as an object in the fusion category $\Ver_p(SL_j)$, as done in the proof of \cite[Theorem~4.1.1]{PolyFun}. The advantage there is that, as observed {\it loc. cit.}, the braiding homomorphism in $\Ver_p(SL_j)$
$$\bk S_n\;\to\; \End(L_j^{\otimes n})$$
is surjective. Now, let $d$ be the number of isomorphism classes of simple objects in $\Ver_p(SL_j)$. We then find
$$\ell(L_j^{\otimes n})^2\;\le\; d\,\dim\End(L_j^{\otimes n})\;=\;d\,\dim (\bk S_n/\TAnn(L_j)_n).$$
Taking the limit of the $2n$-th roots shows $\mathtt{gd}(L_j)\le \gggr(\TAnn(L_j))$.
\end{proof}

\begin{question}
Does the second equality in Proposition~\ref{Propgdg} remain valid for all objects in  $\Ver_p$? Equivalently, is $\gggr$ additive on the submonoid of $(\TSpec\bk S_\infty,\dagger)$ generated by the maximal ideals. 
\end{question}


\section{Categorical dimension of ideals}\label{SecDim}Let $\bk$ again be an arbitrary field.
In the current section, we broaden our scope to $\bk$-linear symmetric monoidal categories more general than tensor categories.

\subsection{Beyond tensor categories} Let $\cA$ be a $\bk$-linear symmetric monoidal category and consider $X\in\cA$.

\subsubsection{}\label{nonrig} For any $n\in\mZ_{>0}$, we can consider the ideal $I_n<\bk S_n$, defined as the kernel of the braiding morphism $\bk S_n\to\End(X^{\otimes n})$. However, in this generality, we only have an inclusion $I_n\subset I_{n+1}\cap \bk S_n$, rather than an equality. Nonetheless, we can still define an ideal $I=\cup_{n}I_n$ in $\bk S_\infty$, generalising the $\TAnn$ construction from tensor categories. 

Contrary to the case of tensor categories, which relates to the very restrictive class of T-prime ideals, any ideal in $\bk S_\infty$ can be realised in the current generality. More precisely, any collection of ideals  $I_n<\bk S_n$ with $I_n\subset I_{n+1}\cap \bk S_n$ can be realised. It suffices to look at the universal case $\mathcal{S}ym$, see \cite[\S4]{CEO}, and let such a collection $I_n$ define a tensor ideal.

Moreover, taking the category of presheaves $\cA^{\mathrm{op}}\to\Vecc$ with Day convolution, shows that restricting to non-rigid abelian symmetric monoidal categories leads to the same behaviour.

\subsubsection{} For a more interesting situation, we keep focusing on rigid tensor categories, but do not restrict to abelian categories. Concretely, we will say that a $\bk$-linear symmetric category
$(\cA,\otimes,\unit)$ is an {\bf RSM category} (rigid symmetric monoidal) if
 $\bk\to\End(\unit)$ is an isomorphism and $(\cA,\otimes,\unit)$ is rigid. 

As in \ref{nonrig}, an object $X\in \cA$ in an RSM category leads to ideals $I_n<\bk S_n$, but now $I_n=I_{n+1}\cap \bk S_n$. Indeed, if $a\bullet 1\in I_{n+1}$ for some $a\in \bk S_n$, it suffices to apply $-\otimes X^\ast$ followed by strategically placing an evaluation and coevaluation to recover $a$, showing that $a\in I_n$. We use again the notation $\TAnn_{\cA}(X)<\bk S_\infty$ for the resulting ideal.

By Theorem~\ref{ThmFinal} below, in characteristic zero, this leads again precisely to the class of T-prime ideals, whereas in positive characteristic we can get ideals that are not T-prime. In particular, the ideals in $\bk S_\infty$ we get from RSM categories are the same as from tensor categories in characteristic zero, but more general in positive characteristic.

\subsection{Categorical dimension of an ideal}

\subsubsection{} 
For a fixed $\delta\in\bk$, we define a $\bk$-linear map $R_\delta:\bk S_{n+1}\to \bk S_n$. $R_\delta$ sends $\sigma\in S_{n+1}$ to $\delta\sigma\in \bk S_n$ if $\sigma\in S_{n}<S_{n+1}$. If $\sigma\in S_{n+1}\backslash S_n$, then $R(\sigma)$ is the element of $S_n$ obtained by simply removing `$n+1$' from an expression of $\sigma$ as a product of disjoint cycles. If we interpret $\sigma \in S_{n+1}$ as a diagram by connecting each of $n+1$ elements on one horizontal line with precisely one on a lower horizontal line, $R_\delta(\sigma)$ is obtained by simplifying the diagram
\begin{equation}
\label{defR}
   R_\delta(\sigma)\;:=\; \begin{array}{c}
      \begin{tikzpicture}[xscale=.6,yscale=1]
        \begin{scope}
          \clip (-1.8,1) rectangle (2.5,-1);
  \draw(1.4,.6) to[out=90,in=180] (1.5,.8)
  to[out=0,in=90] (2,0) to[out=-90,in=0] (1.5,-.8) to[out=180,in=-90]
  (1.4,-.6) to (1,.6);
  \draw[fill=white] (-1.4,.6) rectangle (1.4,-.6);
  \node at (0,0) {$\sigma$};
  \end{scope}
\end{tikzpicture}
      \end{array}
\end{equation}
and multiplying with $\delta$ if a closed loop occurs.

\begin{definition} A proper non-zero ideal $I<\bk S_\infty$ {\bf admits dimension $\delta\in \bk$} if $R_\delta(I_{n+1})\subset I_n$ for all $n>0$.
\end{definition}

\begin{lemma}\label{LemDim}
If a proper non-zero ideal $I<\bk S_\infty$ admits dimension $\delta\in \bk$, then $\delta$ is unique and is in the image of the ring homomorphism $\mZ\to\bk$.
\end{lemma}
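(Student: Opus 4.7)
For \emph{uniqueness}, I would work at the lowest non-trivial degree. Set $n:=\ddd(I)$; since $I$ is proper, $I_1 = I\cap\bk = 0$, so $n\ge 1$, $I_n = 0$, and $I_{n+1}\ne 0$. Pick non-zero $x\in I_{n+1}$, and if $c_\sigma\neq 0$ is the coefficient of some $\sigma\in S_{n+1}$ in $x$, replace $x$ by $c_\sigma^{-1}\sigma^{-1}x\in I_{n+1}$, so that the coefficient of the identity $e$ in $x$ becomes $1$. From the definition~\eqref{defR}, the difference $R_\delta - R_{\delta'}$ acts as multiplication by $(\delta-\delta')$ on the subspace $\bk S_n\subset\bk S_{n+1}$ and as zero on the span of the permutations moving $n+1$, so $(R_\delta-R_{\delta'})(x)=(\delta-\delta')\,p(x)$ for the projection $p\colon\bk S_{n+1}\to\bk S_n$. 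Both $R_\delta(x)$ and $R_{\delta'}(x)$ vanish (they lie in $I_n=0$), so $(\delta-\delta')p(x)=0$; but $p(x)$ has coefficient $1$ on $e$ and is therefore non-zero, forcing $\delta=\delta'$.

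For \emph{integrality}, the main computation is
\[ R_\delta(s_{n+1}) \;=\; (\delta + n)\,s_n, \]
obtained by sorting $S_{n+1}$ according to whether $n+1$ is fixed: permutations in $S_n$ contribute $\delta s_n$, and for each $\tau\in S_n$ there are exactly $n$ permutations $\sigma\in S_{n+1}\setminus S_n$ with $R_\delta(\sigma)=\tau$, coming from inserting $n+1$ into a cycle of $\tau$. If $\sss(I)<\infty$, then choosing $n=\sss(I)$ gives $s_{n+1}\in I$ and $s_n\notin I$, so $(\delta+n)s_n\in I_n$; since $s_n\neq 0$ in $\bk S_n$ and $s_n\notin I_n$, the line $\bk s_n$ meets $I_n$ trivially, forcing $(\delta+n)s_n=0$ and hence $\delta = -n$, which lies in the image of $\mZ\to\bk$. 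The completely parallel formula $R_\delta(a_{n+1})=(\delta-n)a_n$ handles $\aaa(I)<\infty$, yielding $\delta=\aaa(I)$. In positive characteristic $\sss(I)$ is finite for every non-zero $I$ by the result recalled in~\ref{as}, which closes the proof there.

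The remaining case is characteristic zero with both $\sss(I)=\aaa(I)=\infty$ (for instance $I=\mP_{m,n}$ with $m,n>0$). Here my plan is to run the same partial-trace computation on a Young idempotent at the boundary of $I$: by semisimplicity, $I_{n+1}$ for $n=\ddd(I)$ contains some primitive idempotent $e_\lambda$, and the classical partial-trace formula expresses $R_\delta(e_\lambda)$ as a combination of the $e_\mu$, summed over removable boxes $\mu=\lambda-\square$, with coefficients linear in $\delta$ of the form $(\delta+c(\square))$ times a non-zero rational, where $c(\square)\in\mZ$ is the content. Using that distinct $e_\mu$ are linearly independent in $\bk S_n$ and that $R_\delta(e_\lambda)\in I_n=0$, each such linear factor must vanish, yielding $\delta=-c(\square)\in\mZ\cdot 1_\bk$ (and, consistent with uniqueness, equating all removable contents of $\lambda$). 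The \textbf{main obstacle} is precisely this last paragraph: invoking the partial-trace formula on idempotents in the required generality, and confirming the existence of a suitable boundary partition $\lambda$; in contrast the symmetriser computation handling the positive-characteristic case is essentially routine.
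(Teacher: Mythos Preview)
Your uniqueness argument is correct and in fact cleaner than the paper's: the paper does not give a separate uniqueness argument at all, but in positive characteristic gets it as a by-product of the antisymmetriser computation, and in characteristic zero defers everything to the forward reference Theorem~\ref{ThmFinal}(3). Your positive-characteristic integrality argument via $\sss(I)$ (or $\aaa(I)$) is essentially identical to the paper's, which uses $\aaa(I)$ and the formula $R_\delta(a_{n+1})=(\delta-n)a_n$.

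Where you diverge is in characteristic zero. The paper does \emph{not} attempt a direct computation there; instead it invokes Theorem~\ref{ThmFinal}(2), identifying ideals admitting dimension~$\delta$ with non-zero proper tensor ideals in the oriented Brauer category $\OB(\delta)$, and then cites the classification of the latter from~\cite{Selecta} (such ideals exist only for $\delta\in\mZ$). This is a substantially heavier external input than anything you use.

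Your proposed direct route is salvageable, with two remarks. First, work with the \emph{central} idempotent $z_\lambda\in I_{n+1}$ rather than a primitive $e_\lambda$: since $R_\delta$ is a $\bk S_n$-bimodule map, $R_\delta(z_\lambda)\in Z(\bk S_n)$ automatically decomposes along the $z_\mu$, whereas $R_\delta(e_\lambda)$ has no reason to lie in the span of any fixed choice of primitive idempotents. The existence of such $z_\lambda$ is immediate from semisimplicity, so that part of your ``obstacle'' dissolves. Second, you can sidestep the single-step partial-trace formula altogether. Iterating the hypothesis $R_\delta(I_{m+1})\subset I_m$ down to $I_1=0$ shows that the full Markov trace $\tau_\delta(\sigma):=\delta^{c(\sigma)}$ (with $c(\sigma)$ the number of cycles) vanishes on~$I$; in particular $\tau_\delta(z_\lambda)=0$. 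The classical hook--content identity
\[
\tau_\delta(z_\lambda)\;=\;f^\lambda\prod_{(i,j)\in\lambda}\frac{\delta+j-i}{h(i,j)}
\]
then forces $\delta=i-j$ for some cell of $\lambda$, hence $\delta\in\mZ$. This is lighter than both your sketched step and the paper's route via~\cite{Selecta}.
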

\begin{proof}
Assume $\mathrm{char}(\bk)=p>0$. 
As recalled in \ref{as}, then $a_{n+1}\in I$ while $a_{n}\not \in I$ for some $n\in\mZ_{>0}$. It follows quickly that
$$I_n\,\ni \,R_\sigma(a_{n+1})\;=\; (\delta-n)\,a_n$$
and hence $\delta$ is the image of $n$ under $\mZ\to\bk$. 

The characteristic~0 case follows from Theorem~\ref{ThmFinal}(3) below.
\end{proof}

\subsection{Annihilator ideals for RMS categories}

\subsubsection{The oriented Brauer category} The oriented Brauer category $\OB(\delta)$ for a parameter $\delta\in \bk$, see for instance \cite{BCNR, Del07}, is the universal RSM category on a generating object $V_\delta$ of categorical dimension $\delta$. It has a simple diagrammatic description, where closed loops are set to $\delta$. In particular, $\End(V_\delta^{\otimes n})=\bk S_n$. The universality means that for any RMS category $\cA$ with an object $X$ of categorical dimension $\delta$, there is a unique (up to isomorphism) $\bk$-linear symmetric monoidal functor $\OB(\delta)\to \cA$ with $V_\delta\mapsto X$.

We refer to \cite{Selecta} for the basic notions of tensor ideals in monoidal categories.

\begin{theorem}\label{ThmFinal}
\begin{enumerate}
\item A non-zero proper ideal $I<\bk S_\infty$ admits a dimension if and only if it is of the form $\TAnn_{\cA}(X)$ for an object $X$ in a RSM category $\cA$. The dimension of $I$ then equals the categorical dimension of $X$.
\item For every $\delta\in\bk$, there is a bijection between the non-zero proper tensor ideals in $\OB(\delta)$ and the proper non-zero ideals in $\bk S_\infty$ admitting dimension $\delta$, given by $\cI\mapsto \TAnn_{\OB(\delta)/\cI}(V_\delta)$.
\item If $\mathrm{char}(\bk)=0$, then a non-zero proper ideal $I<\bk S_\infty$ admits a dimension if and only if it is T-prime. Moreover, the dimension of $\mP_{m,n}$ is $m-n$.
\item If $\mathrm{char}(\bk)>3$, there are ideals in $\bk S_\infty$ that admit a dimension, but are not T-prime.
\end{enumerate}
\end{theorem}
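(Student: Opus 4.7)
The plan breaks into four parts, with part~(2) serving as the engine for parts~(1) and~(3). For part~(1), I will interpret the map $R_\delta$ of \eqref{defR} categorically: under the universal RSM functor $\OB(\delta)\to\cA$ sending $V_\delta$ to $X$, the operation $R_\delta$ on $\bk S_{n+1}=\End_{\OB(\delta)}(V_\delta^{\otimes n+1})$ coincides with the categorical partial trace $\End(X^{\otimes n+1})\to\End(X^{\otimes n})$ obtained by closing the last strand with the evaluation and coevaluation of $X$, so each internal loop contributes the categorical dimension $\delta$. Partial trace automatically preserves kernels of the $\bk S_\bullet$-action, giving $R_\delta(I_{n+1})\subset I_n$ when $I=\TAnn_\cA(X)$, with the same $\delta$. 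The converse direction will follow from part~(2): given $I$ admitting $\delta$, set $\cA:=\OB(\delta)/\cI_I$ with $\cI_I$ the tensor ideal generated by $\bigsqcup_nI_n$, which realises $I=\TAnn_\cA(V_\delta)$.

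For part~(2), the key inputs are the universal property of $\OB(\delta)$ as the free RSM category on a dualisable object of categorical dimension $\delta$, and the identification $\End_{\OB(\delta)}(V_\delta^{\otimes n})=\bk S_n$ (because all strands of an endomorphism of $V_\delta^{\otimes n}$ must run between top and bottom endpoints of matching orientation, leaving only permutation diagrams). The assignment $\cI\mapsto I^\cI$ with $I^\cI_n:=\cI\cap\bk S_n$ takes values in ideals admitting $\delta$ (by the partial-trace argument of part~(1)) and equals $\TAnn_{\OB(\delta)/\cI}(V_\delta)$. The inverse sends $I$ to the tensor ideal $\cI_I\subset\OB(\delta)$ generated by $\bigsqcup_nI_n$. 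The delicate check is $\cI_I\cap\bk S_n=I_n$: any element on the left, when expanded via the tensor-ideal generators and straightened back to a pure permutation diagram using rigidity, becomes an iterate of $R_\delta$ applied to elements of $I$, hence remains in $I$ by assumption.

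For part~(3), the claim that the dimension of $\mP_{m,n}$ is $m-n$ follows from part~(1) applied to $\bk^{m|n}\in\sVec$ together with $\mP_{m,n}=\TAnn_{\sVec}(\bk^{m|n})$ (Example~\ref{ExsVec}), and this also proves the implication "T-prime $\Rightarrow$ admits a dimension". For the converse in characteristic zero, Lemma~\ref{LemDim} forces $\delta\in\mZ$, and part~(2) reduces the statement to the characteristic-zero classification of tensor ideals in $\OB(\delta)$: I will import the Deligne-style result (via the category $\mathrm{Rep}\,GL_\delta$ and its tensor-ideal lattice) that every non-zero proper tensor ideal in $\OB(\delta)$ arises as the kernel of an $\sVec$-valued functor $V_\delta\mapsto\bk^{m|n}$ for some $(m,n)\in\mN\times\mN$ with $m-n=\delta$, giving $I=\mP_{m,n}$, which is T-prime by Theorem~\ref{ThmClassT}.

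For part~(4), I will exhibit the explicit example $I:=\mP_{p,0}\cap\mP_{0,p}$ in characteristic $p>3$. Both summands admit dimension $\pm p=0\in\bk$ by part~(1) applied to $\bk^{p|0}$ and $\bk^{0|p}$ in $\sVec$, and the condition of admitting a dimension is preserved under intersection (from $R_0(J\cap J')\subset R_0(J)\cap R_0(J')$), so $I$ admits dimension~$0$. They are distinct T-prime ideals for $p>2$, since the sign twist $\sigma\mapsto(-1)^{|\sigma|}\sigma$ carries $\mP_{p,0}$ to $\mP_{0,p}$ non-trivially. Both being prime are semiprimitive, and $\Phi(I)=\Phi(\mP_{p,0})\cup\Phi(\mP_{0,p})$ does not exhaust the set of $p$-regular partitions: the staircase $(k,k-1,\ldots,1)$ with $k>p$ is $p$-regular but has both more than $p$ rows and more than $p$ columns, hence lies in neither system, so $I$ is a non-zero proper ideal. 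Finally, picking $f\in\mP_{p,0}\setminus\mP_{0,p}$ and $g\in\mP_{0,p}\setminus\mP_{p,0}$ gives $fg\in I$ with neither factor in $I$, so $I$ is not prime and hence not T-prime. The principal anticipated obstacle is part~(3): cleanly importing the characteristic-zero classification of tensor ideals in $\OB(\delta)$ and aligning it with the identification of each such ideal as $\mP_{m,n}$ under part~(2) is the technical heart of the proof; the remaining arguments reduce to diagrammatic manipulation and ideal-theoretic bookkeeping.
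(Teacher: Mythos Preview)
Your treatment of parts~(1)--(3) matches the paper's approach closely: both arguments hinge on the universal property of $\OB(\delta)$, the identification of $R_\delta$ with a partial trace, and (for part~(3)) the characteristic-zero classification of tensor ideals in $\OB(\delta)$ (the paper cites \cite{Selecta} for this).

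Your part~(4) is genuinely different. The paper invokes an external result from \cite{CEOq} (in preparation) producing a tensor ideal in $\OB(3)$ in which some $f\otimes g$ lies while $f,g$ do not. Your construction $I=\mP_{p,0}\cap\mP_{0,p}$ is explicit and self-contained, and the closure of ``admitting dimension $0$'' under intersection is a clean observation. The incomparability of $\mP_{p,0}$ and $\mP_{0,p}$ (needed to produce $f,g$) follows, as you note, from the sign-twist symmetry together with their distinctness, which in turn one can see from $\sss(\mP_{p,0})=p(p-1)\neq p=\sss(\mP_{0,p})$ for $p>2$. Your argument in fact works already for $p=3$, slightly improving on the stated range.

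There is, however, a gap in your justification that $I\neq 0$. You claim the staircase $(k,k-1,\ldots,1)$ with $k>p$ lies outside $\Phi(\mP_{0,p})$ because it has more than $p$ columns. That description of $\Phi(\mP_{0,p})$ is the characteristic-zero picture; in characteristic $p$ one has $\Phi(\mP_{0,p})=\{\lambda\ p\text{-regular}:\ell(m(\lambda))\le p\}$, where $m$ is the Mullineux involution, and ``at most $p$ columns'' is not the right condition. The conclusion $I\neq 0$ is nevertheless easy to salvage without any combinatorics: since $\bk S_n/(\mP_{p,0})_n$ and $\bk S_n/(\mP_{0,p})_n$ each embed into $\End_\bk((\bk^p)^{\otimes n})$, both ideals have codimension at most $p^{2n}$ in $\bk S_n$, hence $\dim I_n\ge n!-2p^{2n}>0$ for $n$ large.
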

\begin{proof}
We start by proving (2). By adjunction, all information of a tensor ideal $\cI$ in $\OB(\delta)$ is contained in $I:=\TAnn_{\OB(\delta)/\cI}(V_\delta)$, where in particular $I_n=\cI(V_\delta^{\otimes n},V_\delta^{\otimes n})$. It thus only remains to prove that an ideal $I<\bk S_\infty$ can be realised inside $\OB(\delta)$ if and only if it admits dimension $\delta$. One direction is immediate, since the operation in \eqref{defR} can be interpreted inside $\OB(\delta)$ as tensoring with $V_\delta^\ast$ followed by pre- and post-composing with morphism. Hence an ideal must admit dimension $\delta$ in order to correspond to a tensor ideal in $\OB(\delta)$. The other direction follows from a slightly expanded diagrammatic interpretation.

Part (1) then follows from part (2) and the universality of $\OB(\delta)$.

For part (3) we use parts (1) and (2). Then we can invoke the classification of tensor ideals in $\OB(\delta)$ from \cite{Selecta}, stating that they are given by (for $\delta\in\mZ$, and there are no proper non-zero tensor ideals if $\delta\not\in\mZ$) the kernels of the symmetric monoidal functors $\OB(\delta)\to\sVec$ sending $V_\delta$ to $\bk^{m|n}$ for all $m-n=\delta$. Now compare with Theorem~\ref{ThmsVec}(2).

Finally, for part (4) it suffices to observe that, as proved in \cite[Lemma~7.1.3]{CEOq}, there exists a tensor ideal $\cI$ in $\OB(3)$ with the property that there are morphisms $f,g$ in $\OB(3)$ not in $\cI$ but for which $f\otimes g\in \cI$. Indeed, by adjunction we can assume that $f $ and $g$ are endomorphisms of tensor powers of $V_\delta$, so that the conclusion follows from part (2).
\end{proof}

\begin{corollary}\label{CorFin}
\begin{enumerate}
\item If a skew symmetriser vanishes on an object in a RSM category, then its categorical dimension corresponds to the last one that does not vanish.
\item If $\mathrm{char}(\bk)=p>0$, then $\OB(\delta)$ only has non-zero proper tensor ideals if $\delta\in \mF_p$.
\end{enumerate}
\end{corollary}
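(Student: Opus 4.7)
The plan is to deduce both parts from Theorem~\ref{ThmFinal} combined with the explicit computation of $R_\delta$ on skew symmetrisers that was already used in the proof of Lemma~\ref{LemDim}.

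For part~(1), let $X\in\cA$ have categorical dimension $\delta$, and suppose some $a_N$ vanishes on $X^{\otimes N}$; we may assume $X\not=0$ to avoid a trivial case. Set $I:=\TAnn_\cA(X)$. By Theorem~\ref{ThmFinal}(1), $I$ is a proper non-zero ideal of $\bk S_\infty$ admitting dimension $\delta$. I would then let $n$ be the largest integer with $a_n\notin I$; this is well-defined, because $a_1=1\notin I$ (as $I$ is proper) and $a_N\in I$ by hypothesis. The identity
$$R_\delta(a_{n+1})\;=\;(\delta-n)\,a_n,$$
recorded in the proof of Lemma~\ref{LemDim}, together with the defining property $R_\delta(I_{n+1})\subset I_n$, yields $(\delta-n)a_n\in I$. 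Since $a_n\notin I$ this forces $\delta=n$ in $\bk$, which is exactly the asserted identification of the categorical dimension with the largest surviving skew symmetriser.

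For part~(2), the strategy is to pull the constraint on $\delta$ back through the bijection in Theorem~\ref{ThmFinal}(2). Given a non-zero proper tensor ideal $\cI\subset\OB(\delta)$, that bijection produces a non-zero proper ideal $I<\bk S_\infty$ admitting dimension $\delta$. Lemma~\ref{LemDim} then forces $\delta$ to lie in the image of the canonical ring homomorphism $\mZ\to\bk$; in characteristic $p>0$ this image is precisely the prime field $\mF_p\subset\bk$, completing the argument.

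No serious obstacle is anticipated: the only minor care-points are the existence of the maximum $n$ in part~(1), which reduces to the proper/non-zero bookkeeping above, and verifying that the hypotheses of Theorem~\ref{ThmFinal}(2) and Lemma~\ref{LemDim} are met in part~(2), both of which are immediate once $\cI$ is assumed non-zero and proper.
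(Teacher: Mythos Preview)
Your proposal is correct and follows essentially the same route as the paper: for part~(1) you combine Theorem~\ref{ThmFinal}(1) with the computation $R_\delta(a_{n+1})=(\delta-n)a_n$ from the proof of Lemma~\ref{LemDim}, and for part~(2) you combine Theorem~\ref{ThmFinal}(2) with Lemma~\ref{LemDim}. The paper's proof is just a one-line pointer to these ingredients, so your version simply unpacks the details.
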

\begin{proof}
Part (1), follows from the proof of Lemma~\ref{LemDim} and Theorem~\ref{ThmFinal}(1). Part (2) follows from Theorem~\ref{ThmFinal}(2) and Lemma~\ref{LemDim}.
\end{proof}

Corollary~\ref{CorFin}(2) is analogous with the characteristic zero result, but more remarkable, because in positive characteristic, $\OB(\delta)$ for $\delta\in \mF_p$ is not semisimple abelian (after taking the additive and idempotent closure). Another proof of Corollary~\ref{CorFin}(2) goes as follows. If $\delta\not\in \mF_p$, then $\OB(\delta)$ has a nilpotent endomorphism, $1-\sigma$ acting on $V_\delta^{\otimes p}$ for $\sigma\in S_p$ a $p$-cycle, of non-zero trace. Hence the proof of \cite[Corollary~1.5]{EP}, or the combination of \cite[Corollary~1.5]{EP} and the observation in \ref{as}, show that $\OB(\delta)$ has no proper non-zero tensor ideals.

Conjecture~\ref{ConjVer} predicts the following (for $\mathrm{char}(\bk)=p>0$).

\begin{conjecture} Every T-prime ideal admits a dimension. \end{conjecture}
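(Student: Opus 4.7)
The strategy is to prove directly that a T-prime ideal $I \in \TSpec \bk S_\infty$ admits dimension $\delta := \aaa(I) \bmod p \in \mF_p$. The value $\delta$ is forced by the calculation in the proof of Lemma~\ref{LemDim}: since $a_{\aaa(I)+1} \in I$ and $R_\delta(a_{\aaa(I)+1}) = (\delta - \aaa(I))\, a_{\aaa(I)}$ with $a_{\aaa(I)} \notin I$, admitting any dimension at all forces that dimension to be $\aaa(I)$ in $\mF_p$. The task therefore reduces to verifying
\[
R_\delta(I_{n+1}) \;\subset\; I_n \quad \text{for all } n \ge 1.
\]

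To attempt this, I would use the description $I = I(\Phi)$ for a T-indecomposable inductive system $\Phi$ from Proposition~\ref{LemDefT}(3), combined with the explicit formula~\eqref{IPhi}. Fix $\sigma \in I_{n+1}$ and $W' \in \Phi^s$ with $s \ge n$; the goal is to show that $R_\delta(\sigma)$ annihilates $\Res^{S_s}_{S_n} W'$. T-indecomposability furnishes, at arbitrarily high levels, modules $W \in \Phi^{s+r}$ whose restriction to $S_{n+1} \times S_{s+r-n-1}$ contains (after further restriction along $S_n \subset S_{n+1}$) a copy of $W' \boxtimes U$ for some $U \in \Phi^{s+r-n-1}$. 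The plan is to interpret $R_\delta(\sigma)$ acting on $\Res^{S_s}_{S_n} W'$ as the partial trace, over the $U$ direction, of $\sigma$ acting on a suitable summand of $\Res^{S_{s+r}}_{S_{n+1}} W$, reading off the normalisation factor $\delta$ as a character value on $U$ which, by extremality of $\aaa(I)$, must agree with $\aaa(I) \bmod p$.

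An equivalent, more categorical route is to use Theorem~\ref{ThmFinal}(2): form the tensor ideal $\cI \triangleleft \OB(\delta)$ generated by $\{I_n\}_n$ and show that $\cI \cap \End_{\OB(\delta)}(V_\delta^{\otimes n}) = I_n$, i.e.\ that $\cI$ does not accidentally grow upon closing under tensor products with identity morphisms and pre- and post-composition with cups and caps. The T-primeness condition~(4) of Proposition~\ref{LemDefT}, namely the injectivity of $\bk S_m / I_m \otimes \bk S_n / I_n \hookrightarrow \bk S_{m+n}/I_{m+n}$, should feature centrally in controlling the tensor-product closure, while the cap/cup closure is exactly what the partial trace compatibility encodes.

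The main obstacle is the interaction between the partial trace and the inductive restrictions. The subtlety is that T-indecomposability is a statement about simple constituents of restricted modules, whereas $R_\delta$ is a numerical operation on group algebra elements; bridging these two requires careful control over how $\delta$ arises as a trace in the representation-theoretic picture, and this is where an unconditional proof appears to require substantial new combinatorial input. Conditionally, however, Conjecture~\ref{ConjVer} grants $I = \TAnn_\cC(X)$ for $X \in \Ver_{p^\infty}$; since $\Ver_{p^\infty}$ is in particular an RSM category with categorical dimensions valued in $\mF_p$, Theorem~\ref{ThmFinal}(1) then immediately furnishes the dimension of $I$.
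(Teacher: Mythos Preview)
The statement under review is a \emph{conjecture} in the paper, not a theorem; there is no proof to compare against. The paper's entire justification is the one-line remark that Conjecture~\ref{ConjVer} (surjectivity of $\TAnn_{\Ver_{p^\infty}}$) would imply it, which is exactly the conditional argument you give in your final paragraph: if every T-prime ideal is $\TAnn(X)$ for some $X\in\Ver_{p^\infty}$, then Theorem~\ref{ThmFinal}(1) applies. On that point you and the paper agree.

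The bulk of your proposal, however, is a sketch of an \emph{unconditional} argument, and here you should be clear that it does not close. The identification of the candidate dimension $\delta=\aaa(I)\bmod p$ is correct and forced by Lemma~\ref{LemDim}. But the partial-trace heuristic is not an argument: you propose to realise $R_\delta(\sigma)$ acting on $\Res^{S_s}_{S_n}W'$ as a partial trace of $\sigma$ acting on a larger module, with the scalar $\delta$ emerging as a character value on the auxiliary factor $U$. There is no mechanism offered for why that character value should equal $\aaa(I)\bmod p$; T-indecomposability only guarantees the \emph{existence} of suitable $W$ and says nothing about traces on the complementary factor. Likewise, in the $\OB(\delta)$ reformulation, T-primeness (condition~(4) of Proposition~\ref{LemDefT}) controls closure under $-\otimes\mathrm{id}$, but you have not explained why it controls closure under cups and caps, which is precisely the content of the conjecture. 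You yourself flag this as ``requiring substantial new combinatorial input''; that is an accurate assessment, and the honest summary is that the unconditional direction remains open.
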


\subsection*{Acknowledgement} The research was supported by ARC grant FT220100125. The author thanks Pavel Etingof, Alexander Kleshchev and Nikolay Nessonov for helpful correspondence.


\end{document}